\numberwithin{equation}{section}
\theoremstyle{plain} 
\newtheorem{theorem}{Theorem}[section] 
\newtheorem{proposition}[theorem]{Proposition} 
\newtheorem{lemma}[theorem]{Lemma} 
\newtheorem{corollary}[theorem]{Corollary}
\theoremstyle{definition} 
\newtheorem{definition}{Definition}[section] 
\newtheorem{example}[theorem]{Example} 
\newtheorem{remark}[theorem]{Remark}
\newcommand{\C}{{\mathbb C}}
\newcommand{\n}{{\mathbb N}} 
\newcommand{\p}{{\mathbb P}}
\newcommand{\z}{{\mathbb Z}} 
\newcommand{\pj}{{{\mathbb P}^1}}
\newcommand{\pii}{{{\mathbb P}^2}}
\newcommand{\piii}{{{\mathbb P}^3}}
\newcommand{\sce}{\mathscr{E}}
\newcommand{\scf}{\mathscr{F}} 
\newcommand{\sco}{\mathscr{O}} 
\newcommand{\sch}{\mathscr{H}}
\newcommand{\sci}{\mathscr{I}}
\newcommand{\scl}{\mathscr{L}}
\newcommand{\tH}{\text{H}} 
\newcommand{\h}{\text{h}}
\newcommand{\izo}{\overset{\sim}{\rightarrow}} 
\newcommand{\Izo}{\overset{\sim}{\longrightarrow}} 
\newcommand{\ra}{\rightarrow} 
\newcommand{\lra}{\longrightarrow} 
\newcommand{\xra}{\xrightarrow}  
\newcommand{\vb}{\, \vert \, } 
\newcommand{\prim}{{\, \prime}} 
\newcommand{\Ker}{\text{Ker}\, }
\newcommand{\Cok}{\text{Coker}\, }
\newcommand{\e}{\varepsilon}
\begin{document}

\title[On the spectrum]{On the spectrum of a stable rank 2 vector bundle on
$\mathbb{P}^3$}

\author[Coand\u{a}]{Iustin~Coand\u{a}} 
\address{Institute of Mathematics ``Simion Stoilow'' of the Romanian Academy, 
         P.O. Box 1-764, 
%\newline 
         RO--014700, Bucharest, Romania} 
\email{Iustin.Coanda@imar.ro} 

%\subjclass[2010]{Primary: 14F05; Secondary: ..., ..., ...}
\subjclass[2020]{Primary: 14J60; Secondary: 14H50, 14M05, 13H10}

\keywords{Vector bundle, projective space, Horrocks monad, space curve}

%\thanks{}

\begin{abstract}
The spectrum of a stable rank 2 vector bundle $E$ with $c_1 = 0$ 
on the projective 3-space is a finite
sequence of positive integers $s(0)$, ..., $s(m)$ characterizing the Hilbert function
of the graded $H^1$-module of $E$ in negative degrees.
Hartshorne [Invent. Math. 66 (1982), 165--190] showed that if $s(i) = 1$ for some $i > 0$
then $s(i+1) = 1$, ..., $s(m) = 1$.
We show that if $s(0) = 1$ then $E(1)$ has a global section whose zero scheme is a double
structure on a space curve. We deduce, then, the existence of sequences satisfying
Hartshorne's condition that cannot be the spectrum of any stable 2-bundle. 
This provides a negative answer to a question of
Hartshorne and Rao [J. Math. Kyoto Univ. 31 (1991), 789--806].
\end{abstract}

\maketitle 
\tableofcontents

\section*{Introduction} 

Let $E$ be a stable rank 2 vector bundle on $\piii$ (projective 3-space over an
algebraically closed field $k$ of characteristic 0) with the first Chern class
$c_1 = 0$. Stability means, in this case, that $\tH^0(E) = 0$. Let
$S= k[T_0 , \ldots , T_3]$ be the projective coordinate ring of $\piii$. According to
Horrocks theory, the graded Artinian $S$-module 
$\tH^1_\ast(E) := \bigoplus_{i \in \z}\tH^1(E(i))$ encodes a significant amount of
information about $E$. Let $\h^1_E \colon \z \ra \z$ be the Hilbert function of this
graded module, defined by$\, :$
\[
\h^1_E(i) = \h^1(E(i)) := \dim_k \tH^1(E(i))\, ,\  i \in \z\, . 
\]
As in the case of the Hilbert function of a group of points in $\pii$, one considers,
in order to deal with smaller values, the second difference function of $\h^1_E$.
Recall that for any function $f \colon \z \ra \z$ one defines the difference function
$\Delta f \colon \z \ra \z$ by $(\Delta f)(i) := f(i) - f(i-1)$. Returning to $\h^1_E$,
one puts, for $i \geq 0$,
\[
s(i) := (\Delta^2\h^1_E)(-i-1)\, . 
\]
According to the theorem of Grauert-M\"{u}lich-Spindler \cite{sp}, for a general line
$L \subset \piii$ one has $\tH^0(E_L(-1)) = 0$. If $H \subset \piii$ is a plane
containing $L$ then $\tH^0(E_H(-1)) = 0$. Restricting $E$ to $H$ and then to $L$ one sees
easily that $s(i) \geq 0$, $\forall \, i \geq 0$. Moreover, if
$m := \text{max} \{l \geq 0 \vb \tH^1(E(-l-1)) \neq 0\}$ 
then $s(i) = 0$ for $i > m$. Putting $s(i) := s(-i)$ for $i < 0$, one has$\, :$

\vskip2mm 

\begin{enumerate}
\item[(1)] $\h^1(E(l)) = \h^0({\textstyle \bigoplus}_{i \in \z}\sco_\pj(i+l+1))$,
for $l \leq -1$;
\item[(2)] $\h^2(E(l)) = \h^1({\textstyle \bigoplus}_{i \in \z}\sco_\pj(i+l+1))$,
for $l \geq -3$. 
\end{enumerate}

\vskip2mm 

Indeed, if, for a fixed $i \in \z$, one considers the function $f \colon \z \ra \z$,
$f(l) := \h^0(\sco_\pj(i+l+1))$ then $(\Delta^2f)(-i-1) = 1$ and $(\Delta^2f)(j) = 0$
for $j \neq -i-1$. This proves (1). On the other hand, (2) follows from (1), Serre
duality and the fact that $E \simeq E^\vee$ (because $c_1 = 0$). Since, by Riemann-Roch,
$\h^1(E(-1)) - \h^2(E(-1)) = c_2$ one deduces that $c_2 = \sum_{i \in \z}s(i)$.

The function $s \colon \z \ra \z$ defined above is called the \emph{spectrum} of $E$.
It has been introduced by Barth and Elencwajg \cite{be}, who provided a geometric
interpretation for the vector bundle $K_E := \bigoplus_{i \in \z}s(i)\sco_\pj(i)$ on
$\pj$, and then redefined, algebraically, by Hartshorne \cite{ha}. It is customary
to say, at least in concrete cases, that the spectrum of $E$ is
$(\ldots , 0^{s(0)} , 1^{s(1)} , \ldots , m^{s(m)})$. The spectrum has the following
properties$\, :$

\vskip2mm 

\begin{enumerate}
\item[(3)] (Symmetry) $s(-i) = s(i)$, $\forall \, i \in \z$;
\item[(4)] (Connectedness) $s(i) \geq 1$ for $-m \leq i \leq m$;
\item[(5)] If $s(i) = 1$ for some $i \geq 1$ then $s(j) = 1$ for $i \leq j \leq m$.   
\end{enumerate}

\vskip2mm 

The first two properties have been proved by Barth and Elencwajg (recall that they
have an alternative definition of the spectrum) and the third one by Hartshorne \cite{ha2}.
It is naturally to ask if any function $s \colon \z \ra \z$ satisfying conditions
(3)--(5) is the spectrum of some stable 2-bundle $E$ with $c_1 = 0$.
Hartshorne and Rao \cite{har} showed that this is the case if $c_2 \leq 20$ (for
the case $c_1 = -1$ see Fontes and Jardim \cite{fj}). As a corollary of our results
we shall see that this is no longer true if $c_2 \geq 21$. Before stating our
results we need to recall two other notions. The first one is the degree sequence
of a minimal system of homogeneous generators of $\tH^1_\ast(E)$, namely the function
$\rho \colon \z \ra \z$ defined by$\, :$
\[
\rho(i) := \dim_k(\tH^1(E(i))/S_1\tH^1(E(i-1)))\, ,\  i \in \z\, . 
\]
Secondly, putting $A := \bigoplus_{i \in \z}\rho(i)\sco_\piii(i)$, $E$ admits, according to
Horrocks \cite{ho}, a \emph{monad} of the form$\, :$
\[
A \overset{\alpha}{\lra} B \overset{\beta}{\lra} A^\vee \, , 
\]
with $B$ a direct sum of line bundles, that is, $B = \bigoplus_{i \in \z}b_i\sco_\piii(i)$.
Monad means that $\beta \circ \alpha = 0$, $\beta$ and $\alpha^\vee$ are epimorphisms and
$E \simeq \Ker \beta/\text{Im}\, \alpha$. Moreover, one can assume that the monad is
self-dual, that is, there exists a skew-symmetric isomorphism $\Phi \colon B \izo B^\vee$
such that $\beta = \alpha^\vee \circ \Phi$.

Now, Barth \cite{ba} showed that$\, :$

\vskip2mm

\begin{enumerate}
\item[(6)] $\rho(i) \leq s(-i-1) - 1$, for $-m \leq i < 0$, $\rho(-m-1) = s(m)$ and
$\rho(i) = 0$ for $i < -m-1$. 
\end{enumerate}

\vskip2mm

\noindent
(see, also, Hartshorne and Rao \cite[Prop.~3.1]{har}). We extend this result, in
Theorem~\ref{T:rholeqs-1}, to nonnegative degrees by showing that$\, :$

\vskip2mm

\begin{enumerate}
\item[(7)] $\rho(i) \leq \text{max}(s(-i-1) - 2 , 0)$, for $i \geq 0$. 
\end{enumerate}

\vskip2mm

\noindent
Moreover, we show that if $\rho(i) = s(-i-1) - 1$ for some $i$ with $-m \leq i \leq -2$
then $s(j) = 1$ for $-i \leq j \leq m$. We use, for that, some simplifications of the
arguments in the proof of Hartshorne \cite[Prop.~5.1]{ha2}. Then we express, in
Proposition~\ref{P:bi}, the integers $b_i$ in terms of the functions $s$ and $\rho$.
Using these results and ideas of Rao \cite{r}, we show, in Proposition~\ref{P:s(0)=1},
that$\, :$

\vskip2mm

\begin{enumerate}
\item[(8)] \emph{If} $s(0) = 1$ \emph{then} $E(1)$ \emph{has a global section whose
zero scheme is a double structure} $Y$ \emph{on a locally Cohen-Macaulay curve} $X$
\emph{with} $\tH^1(\sci_X) = 0$ \emph{such that} $\sci_Y$
\emph{is the kernel of an epimorphism} $\sci_X \ra \omega_X(2)$.   
\end{enumerate}

\vskip2mm

In this case, the spectrum of $E$ can be computed by the formula
$s(i) = (\Delta^2\h^0_X)(i)$, $\forall \, i \geq 0$, where $\h^0_X \colon \z \ra \z$
is defined by $\h^0_X(i) := \h^0(\sco_X(i))$. If, moreover, $s(1) = 2$ and $s(2) = 2$
then $\tH^0(\sci_X(2)) \neq 0$, i.e., $X$ is contained in a nonsingular quadric surface,
a quadric cone, the union of two planes or a double plane. Using the results of
Hartshorne and Schlesinger \cite{has} about curves contained in a double plane we 
describe all the functions $\Delta^2\h^0_X$, for $X$ curve with
$\tH^0(\sci_X(2)) \neq 0$ (and $\tH^1(\sci_X) = 0$). This enables us to show that there
are functions $s \colon \z \ra \z$, satisfying (3)--(5) above, that are not the spectrum
of any stable rank 2 vector bundle with $c_1 = 0$. The problem of the determination
of all functions $s \colon \z \ra \z$ with $s(0) = 1$, $s(1) = 2$, $s(2) = 2$ that are
the spectrum of some stable 2-bundle with $c_1 = 0$ seems, however, quite complicated
because one has to identify, among the previously determined functions $\Delta^2\h^0_X$, 
those corresponding to curves admitting an epimorphism $\sci_X \ra \omega_X(2)$.

We finally prove, in Proposition~\ref{P:s(0)=s(1)=2}, a result analogous to (8) above
for the case where $s(0) = 2$ and $s(1) = 2$. This result is more complicated and its
proof uses, among other things, a description of the stable 2-bundles $E$ with
$c_1 = 0$ and $\h^0(E(1)) = 2$. This description is given in
Appendix~\ref{A:h0e(1)=2}. In Appendix~\ref{A:xin2h} we recall the results of
Hartshorne and Schlesinger \cite{has}, complemented by the results of Chiarli, Greco
and Nagel \cite{cgn}, about curves in a double plane.

\vskip2mm

\noindent
{\bf Notation.} (i) By sheaf on a quasi-projective scheme $X$ we always mean
a coherent $\sco_X$-module.

(ii) Unless otherwise explicitly stated, we work only with the closed points
of the schemes under consideration.

(iii) By curve we always mean a locally Cohen-Macaulay, purely 1-dimensional
projective scheme.

(iv) If $X$, $Y$ are closed subschemes of $\p^n$, defined by ideal sheaves $\sci_X$ and
$\sci_Y$, respectively, we write $Y \subseteq X$ if $Y$ is a closed subscheme of $X$,
i.e., if $\sci_X \subseteq \sci_Y$. In this case we put $\sci_{Y , X} := \sci_Y/\sci_X$.
If $\scf$ is a sheaf on $X$ we put $\scf_Y := \scf \otimes_{\sco_X} \sco_Y$. $\scf_Y$
can be identified with the restriction $\scf \vert_Y$ of $\scf$ to $Y$. If $x \in X$
then $\scf_{\{x\}}$ is the reduced stalk $\scf(x)$ of $\scf$ at $x$.

(v) For further notation see the Introduction above.

\section{Preliminaries on monads}\label{S:monads} 

We recall, in this section, the definition and elementary properties of
``splitting monads'' of rank 2 vector bundles on $\piii$, introduced by
Rao \cite{r}, as well as a weaker variant of Rao \cite[Thm.~2.4(ii)]{r}, relating
splitting monads to double structure on (locally Cohen-Macaulay) space curves.

\begin{remark}\label{R:horrocks}
(Horrocks monads)\quad We recall, following Horrocks \cite{ho} and Barth and Hulek
\cite{bh}, the definition and main properties of a (minimal) Horrocks monad of a
vector bundle on $\piii$.
A \emph{Horrocks monad} on $\piii$ is a complex $A^\bullet$ with only three
non-zero terms$\, :$
\[
A^{-1} \xra{d^{-1}} A^0 \xra{d^0} A^1 
\]
that are direct sums of line bundles on $\piii$ and such that $d^0$ and
$d^{-1 \vee}$ are epimorphisms. The \emph{cohomology sheaf} $\mathcal{H}^0(A^\bullet)$ of
the monad is a vector bundle (= locally free sheaf).
The monad is \emph{minimal} if the matrices defining its
differentials contain no entry that is a non-zero constant. By the general
procedure of cancellation of redundant terms in a complex (see, for example,
\cite[Example~A.2]{cho}), every monad is homotopically equivalent to a minimal one.

By Horrocks' method of ``killing cohomology'', every vector bundle $E$ on $\piii$
is the cohomology sheaf of some monad. Indeed, let $A^1$ be a direct sum of line
bundles for which there exists an epimorphism of graded $S$-modules
$e \colon \tH^0_\ast(A^1) \ra \tH^1_\ast(E)$. Consider, also, an epimorphism
$\pi \colon L \ra E^\vee$, with $L$ a direct sum of line bundles. Dualizing $\pi$
one gets an exact sequence$\, :$
\[
0 \lra E \xra{\pi^\vee} L^\vee \lra F \lra 0\, . 
\]
Since the connecting morphism $\tH^0_\ast(F) \ra \tH^1_\ast(E)$ is surjective, $e$
lifts to a morphism $f \colon \tH^0_\ast(A^1) \ra \tH^0_\ast(F)$. The pullback
extension$\, :$
\[
\SelectTips{cm}{12}\xymatrix{0\ar[r] & E\ar[r]\ar @{=}[d] & Q\ar[r]\ar[d] &
A^1\ar[r]\ar[d]^-{\widetilde{f}} & 0\\
0\ar[r] & E\ar[r] & L^\vee\ar[r] & F\ar[r] & 0}
\]
has the property that the connecting morphism $\tH^0_\ast(A^1) \ra \tH^1_\ast(E)$
coincides with $e$, hence $\tH^1_\ast(Q) = 0$. In this case, there exists an
exact sequence$\, :$
\[
0 \lra A^{-1} \xra{d^{-1}} A^0 \lra Q \lra 0\, , 
\]
with $A^{-1}$ and $A^0$ direct sums of line bundles (this follows from ``graded
Serre duality'' as formulated, for example, in \cite[Thm.~1.1]{cho}). The
differential $d^0$ is just the composite morphism $A^0 \ra Q \ra A^1$.

By Barth and Hulek \cite[Prop.~4]{bh}, if $A^\bullet$, $B^\bullet$ are monads with
cohomology sheaves $E$ and $F$, respectively, then the map$\, :$
\[
\text{Hom}_{\text{K}(\piii)}(A^\bullet , B^\bullet) \lra \text{Hom}_{\sco_\piii}(E , F)\, , \
\phi \mapsto \mathcal{H}^0(\phi)\, , 
\]
is bijective. Here $\text{Hom}_{\text{K}(\piii)}$ means ``morphisms of complexes modulo
homotopical equivalence''.

If, moreover, $A^\bullet$ and $B^\bullet$ are minimal and
$\phi \colon A^\bullet \ra B^\bullet$ is a morphism of complexes such that
$\mathcal{H}^0(\phi)$ is an isomorphism then $\phi$ is an isomorphism of complexes.
Indeed, there exists a morphism of complexes $\psi \colon B^\bullet \ra A^\bullet$
such that $\mathcal{H}^0(\psi) = \mathcal{H}^0(\phi)^{-1}$. Then $\psi \phi$ (resp.,
$\phi \psi$) is homotopical equvalent to $\text{id}_{A^\bullet}$ (resp., $\text{id}_{B^\bullet}$).
Since the monads are minimal one deduces, easily, that $\psi \phi$ and $\phi \psi$
are isomorphisms of complexes. It follows that $\phi$ is injective and surjective
hence an isomorphism. 
\end{remark}

\begin{remark}\label{R:selfdual}
(Self-dual monads)\quad Let $E$ be a vector bundle on $\piii$ and $A^\bullet$ a
minimal Horrocks monad of $E$. Assume that $E$ is endowed with a non-degenerate
symplectic form, i.e., with an isomorphism $\sigma \colon E \ra E^\vee$ such that
the composite morphism $E \ra E^{\vee \vee} \overset{\sigma^\vee}{\lra} E^\vee$
equals $- \sigma$. $\sigma$ extends to a morphism of complexes
$\psi \colon A^\bullet \ra A^{\bullet \vee}\, :$
\[
\SelectTips{cm}{12}\xymatrix{A^{-1}\ar[r]^-{d^{-1}}\ar[d]^-{\psi^{-1}} &
A^0\ar[r]^-{d^0}\ar[d]^-{\psi^0} & A^1\ar[d]^-{\psi^1}\\
A^{1 \vee}\ar[r]^-{d^{0 \vee}} & A^{0 \vee}\ar[r]^-{d^{-1 \vee}} & A^{-1 \vee}}
\]
The morphism $\phi := (\psi - \psi^\vee)/2$ extends $\sigma$, too, and has the
additional properties$\, :$
\[
\phi^{0 \vee} = - \phi^0 \text{ and } \phi^{1 \vee} = - \phi^{-1}\, . 
\]
Since $\sigma$ is an isomorphism, $\phi$ is an isomorphism of complexes.

Putting $A := A^{-1}$, $B := A^0$, $\alpha := d^{-1}$, $\beta := \phi^1 \circ d^0$
and $\Phi := \phi^0$, we obtain a minimal monad
$A \overset{\alpha}{\ra} B \overset{\beta}{\ra} A^\vee$ for $E$ and an isomorphism
of monads$\, :$
\[
\SelectTips{cm}{12}\xymatrix{A\ar[r]^-{\alpha}\ar[d]_-{-\text{id}} &
B\ar[r]^-{\beta}\ar[d]^-{\Phi} & A^\vee\ar[d]^-{\text{id}}\\
A\ar[r]^-{\beta^\vee} & B^\vee\ar[r]^-{\alpha^\vee} & A^\vee}
\]
with $\Phi^\vee = - \Phi$. In other words, we obtain a \emph{self-dual monad} 
$A \overset{\alpha}{\ra} B \xra{\alpha^\vee \circ \Phi} A^\vee$.

Notice that if we have an isomorphism $\gamma \colon B^\prime \ra B$, putting
$\alpha^\prime := \gamma^{-1} \circ \alpha$ and
$\Phi^\prime := \gamma^\vee \circ \Phi \circ \gamma$, we obtain a new self-dual
monad and an isomorphism of monads$\, :$
\[
\SelectTips{cm}{12}\xymatrix{A\ar[r]^-{\alpha^\prime}\ar @{=}[d] &
B^\prime\ar[r]^-{\alpha^{\prime \vee} \circ \Phi^\prime}\ar[d]^-{\gamma}_-{\wr} &
A^\vee\ar @{=}[d]\\
A\ar[r]^-{\alpha} & B\ar[r]^-{\alpha^\vee \circ \Phi} & A^\vee}
\]
One gets, analogously, a new self-dual monad from an isomorphism
$\eta \colon B \ra B^\prime$ (with $\alpha^\prime = \eta \circ \alpha$ and
$\Phi^\prime = (\eta^{-1})^\vee \circ \Phi \circ \eta^{-1}$ hence with
$\Phi = \eta^\vee \circ \Phi^\prime \circ \eta$). 
\end{remark}

\begin{lemma}\label{L:semisplitmonad}
Under the hypothesis of Remark~\emph{\ref{R:selfdual}}, $E$ admits a minimal monad
$0 \ra A \overset{\alpha}{\ra} B \overset{\beta}{\ra} A^\vee \ra 0$ with
$B = B_+ \oplus B_0 \oplus B_+^\vee$, where $B_+$ is a direct sum of line bundles of
positive degree and $B_0$ is a trivial bundle, such that $\beta_+ = -\alpha_-^\vee$,
$\beta_- = \alpha_+^\vee$, and $\beta_0 = \alpha_0^\vee \circ \phi$, for some isomorphism
$\phi \colon B_0 \ra B_0^\vee$ with $\phi^\vee = -\phi$. Here $\beta_+$ $($resp., $\alpha_+)$
denotes the restriction $($resp., corestriction$)$ $B_+ \ra A^\vee$ $($resp.,
$A \ra B_+)$ of $\beta$ $($resp., $\alpha)$ etc. 
\end{lemma}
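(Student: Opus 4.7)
The plan is to start from the self-dual minimal monad $A \overset{\alpha}{\ra} B \xra{\alpha^\vee \circ \Phi} A^\vee$ with $\Phi^\vee = -\Phi$ provided by Remark~\ref{R:selfdual} and, through two successive $\gamma$-substitutions of the kind described at the end of that remark, to put the symplectic form $\Phi$ into the block form
\[
\Phi = \begin{pmatrix} 0 & 0 & I \\ 0 & \phi & 0 \\ -I & 0 & 0 \end{pmatrix}, \quad \phi^\vee = -\phi,
\]
relative to the decomposition $B = B_+ \oplus B_0 \oplus B_+^\vee$ induced by the sign of the degrees of the line bundle summands of $B$. The three formulas for $\beta = \alpha^\vee \circ \Phi$ claimed in the lemma are then immediate from reading off the three columns of the matrix above.

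First I split $B = B_+ \oplus B_0 \oplus B_-$ by the sign of the degrees. Because $\text{Hom}_{\sco_\piii}(\sco_\piii(i), \sco_\piii(-j)) = 0$ for $i + j > 0$, the blocks $\Phi^{++}$, $\Phi^{+0}$, $\Phi^{0+}$ of $\Phi$ vanish automatically, while the skew-symmetry $\Phi^\vee = -\Phi$ makes $\Phi^{00}$ and $\Phi^{--}$ skew and determines the remaining blocks pairwise. After permuting the target factors of $B^\vee$ into the order $B_-^\vee, B_0^\vee, B_+^\vee$, the matrix of $\Phi$ becomes block upper triangular with diagonal blocks $\Phi^{-+}$, $\Phi^{00}$, $\Phi^{+-}$, so the hypothesis that $\Phi$ is an isomorphism forces each of these three blocks to be an isomorphism in its own right. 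In particular, the multiplicities of $\sco_\piii(i)$ in $B_+$ and of $\sco_\piii(-i)$ in $B_-$ coincide for every $i > 0$, allowing the identification $B_- = B_+^\vee$. A substitution with $\gamma = \text{diag}(I, I, (\Phi^{+-})^{-1})$ (the inverse exists because $\Phi^{+-}$ is triangular in the degree filtration of $B_+^\vee$ with invertible constant diagonal) then normalizes $\Phi^{+-}$ to the identity, and skew-symmetry automatically gives $\Phi^{-+} = -I$.

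With this normalization, I apply a final substitution with $\gamma$ of the strictly upper-triangular shape
\[
\gamma = \begin{pmatrix} I & \gamma_{+0} & \gamma_{+-} \\ 0 & I & 0 \\ 0 & 0 & I \end{pmatrix}
\]
(so $\gamma_{0-} := 0$) and compute $\gamma^\vee \Phi \gamma$ block by block. The vanishing of the $(0,-)$ and $(-,0)$ blocks reduces, via $\Phi^{0-} = -(\Phi^{-0})^\vee$, to the single equation $\gamma_{+0}^\vee = -\Phi^{0-}$, solved by $\gamma_{+0} := \Phi^{-0}$; the vanishing of the $(-,-)$ block reduces to $\gamma_{+-} - \gamma_{+-}^\vee = \Phi^{--}$, which, since $\Phi^{--}$ is skew and the characteristic is zero, is solved by $\gamma_{+-} := \tfrac{1}{2}\Phi^{--}$. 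A direct check shows that the blocks $\Phi^{+-} = I$, $\Phi^{-+} = -I$ and $\Phi^{00}$ remain unchanged by this substitution; one then sets $\phi := \Phi^{00}$.

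The hard part will be the block-by-block bookkeeping of the conjugation $\gamma^\vee \Phi \gamma$ and the verification that $\gamma_{+0}$, $\gamma_{+-}$ and $\gamma_{--}$ are indeed morphisms between the appropriate sums of line bundles of non-negative relative degrees (which they are, because each is built from a block of $\Phi$ of matching type). The only essential non-formal ingredient is the division by $2$ in the last step, which is why the characteristic-zero hypothesis is needed. Minimality of the final monad is automatic: the transformation $(\alpha, \Phi) \mapsto (\gamma^{-1}\alpha, \gamma^\vee \Phi \gamma)$ gives an isomorphism of complexes, so minimality is preserved, and in any case any two minimal Horrocks monads of $E$ are isomorphic by Remark~\ref{R:horrocks}.
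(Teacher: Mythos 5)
Your proof is correct and follows essentially the same route as the paper: starting from the minimal self-dual monad of Remark~\ref{R:selfdual}, using degree reasons to see that the blocks of $\Phi$ out of $B_+$ into $B_+^\vee\oplus B_0^\vee$ vanish and that the anti-diagonal and middle blocks are isomorphisms, and then normalizing $\Phi$ by substitutions $\Phi\mapsto\gamma^\vee\Phi\gamma$ (with the same use of $\tfrac12$ on the skew block and the same appeal to preservation of minimality). The only differences are cosmetic: you identify $B_-\simeq B_+^\vee$ and normalize $\Phi^{+-}$ to the identity first and then clear the remaining blocks with maps into $B_+$, whereas the paper clears $\phi_{23}$ and $\phi_{33}$ first (using $\phi_{22}^{-1}$ and $\phi_{31}$) and performs the identification via $\phi_{13}$ at the end.
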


\begin{proof}
Start with a minimal self-dual monad
$0 \ra A \overset{\alpha}{\ra} B \overset{\beta}{\ra} A^\vee \ra 0$ of $E$, with
$\beta = \alpha^\vee \circ \Phi$, for some isomorphism $\Phi \colon B \ra B^\vee$ with
$\Phi^\vee = -\Phi$. Choose a 
decomposition $B = B_+ \oplus B_0 \oplus B_-$, with $\tH^0(B_+^\vee) = 0$, $B_0$
trivial and $\tH^0(B_-) = 0$. Consider, also, the dual decomposition
$B_+^\vee \oplus B_0^\vee \oplus B_-^\vee$ of $B^\vee$. Then
$\Phi \colon B \ra B^\vee$ is represented by a matrix of the form$\, :$
\[
\begin{pmatrix}
0 & 0 & \phi_{13}\\ 0 & \phi_{22} & \phi_{23}\\ \phi_{31} & \phi_{32} & \phi_{33} 
\end{pmatrix}
\]
with $\phi_{ji}^\vee = - \phi_{ij}$. Since $\Phi$ maps $B_+$ isomorphically onto
$B_-^\vee$, $\phi_{31}$ is an isomorphism. Moreover, since $\Phi$ maps
$B_+ \oplus B_0$ isomorphically onto $B_0^\vee \oplus B_-^\vee$, $\phi_{22}$ is an
isomorphism, too.
Consider the automorphisms $\gamma$, $\gamma_1$ of $B$ represented by the
matrices$\, :$
\[
\begin{pmatrix}
\text{id} & 0 & 0\\ 0 & \text{id} & \psi\\ 0 & 0 & \text{id} 
\end{pmatrix}
\text{ and }
\begin{pmatrix}
\text{id} & 0 & \psi_1\\ 0 & \text{id} & 0\\ 0 & 0 & \text{id}
\end{pmatrix} ,   
\]
respectively, for some morphisms $\psi \colon B_- \ra B_0$ and
$\psi_1 \colon B_- \ra B_+$. Then
$\Phi^\prime := \gamma^\vee \circ \Phi \circ \gamma$ is represented by the
matrix$\, :$
\[
\begin{pmatrix}
0 & 0 & \phi_{13}\\ 0 & \phi_{22} & \phi_{23}^\prime\\
\phi_{31} & \phi_{32}^\prime &  \phi_{33}^\prime  
\end{pmatrix}
\]
where $\phi_{23}^\prime = \phi_{23} + \phi_{22}\psi$. Taking
$\psi := - \phi_{22}^{-1}\phi_{23}$ it follows that $\phi_{23}^\prime = 0$ hence
$\phi_{32}^\prime = 0$.

In this case, $\Phi^{\prime \prime} := \gamma_1^\vee \circ \Phi^\prime \circ \gamma_1$
is represented by the matrix$\, :$
\[
\begin{pmatrix}
0 & 0 & \phi_{13}\\ 0 & \phi_{22} & 0\\ \phi_{31} & 0 &  \phi_{33}^{\prime \prime}  
\end{pmatrix}
\]
where
$\phi_{33}^{\prime \prime} = \phi_{33}^\prime + \phi_{31}\psi_1 +\psi_1^\vee\phi_{13}$.
Since $\phi_{13} = - \phi_{31}^\vee$ and $\phi_{33}^{\prime \vee} = - \phi_{33}^\prime$,
if $\psi_1$ is the solution of the equation
$\phi_{31}\psi_1 = - \frac{1}{2}\phi_{33}^\prime$ then $\phi_{33}^{\prime \prime} = 0$.

Put, now, $B^\prime := B_+ \oplus B_0 \oplus B_+^\vee$. $B^{\prime \vee}$ is isomorphic to
$B_+^\vee \oplus B_0^\vee \oplus B_+$. Consider the isomorphism
$\eta := \text{id} \oplus \text{id} \oplus \phi_{13} \colon B \ra B^\prime$. Then
$\Phi^{\prime \prime \prime} := (\eta^{-1})^\vee \circ \Phi^{\prime \prime} \circ \eta^{-1} \colon
B^\prime \ra B^{\prime \vee}$  
is represented by the matrix$\, :$
\[
\Psi = 
\begin{pmatrix}
0 & 0 & \text{id}\\ 0 & \phi_{22} & 0\\ -\text{id} & 0 & 0
\end{pmatrix} 
\] 
because one can easily check that
$\Phi^{\prime \prime} = \eta^\vee \circ \Psi \circ \eta$. 
\end{proof}

\begin{definition}\label{D:semisplitmonad} 
A monad as in the conclusion of Lemma~\ref{L:semisplitmonad} is called a
\emph{semi-splitting monad} of $E$. Notice that, for such a monad, the condition
$\beta \circ \alpha = 0$ is equivalent to$\, :$
\[
\alpha_+^\vee\alpha_- - \alpha_-^\vee\alpha_+ + \alpha_0^\vee \phi \alpha_0 = 0\, . 
\]

One can also define a \emph{reduced semi-splitting monad} for $E$ as follows. 
Consider a decomposition $A = A_{\geq 0} \oplus A_-$, with
$\tH^0(A_{\geq 0}^\vee(-1)) = 0$ and $\tH^0(A_-) = 0$. $\alpha$ maps $A_{\geq 0}$ into
$B_+$. The induced morphism $A_{\geq 0} \ra B_+$ must be a locally split monomorphism
hence its cokernel ${\overline B}_+$ is locally free. Using the diagram$\, :$
\[
\SelectTips{cm}{12}\xymatrix{A_{\geq 0}\ar[r]^-{\text{id}}\ar[d]_{\text{incl}} &
A_{\geq 0}\ar[r]\ar[d]^{\alpha \circ \text{incl}} & 0\ar[d]\\
A\ar[r]^-{\alpha}\ar[d] & B\ar[r]^-{\beta}\ar[d]^{\text{pr} \circ \beta} &
A^\vee\ar[d]^{\text{pr}}\\
0\ar[r] & A_{\geq 0}^\vee\ar[r]^-{\text{id}} & A_{\geq 0}^\vee} 
\]
one deduces that the semi-splitting monad of $E$ is isomorphic, in the derived category
of coherent sheaves on $\piii$, to a monad of the form$\, :$ 
\[
0 \lra A_- \overset{\overline \alpha}{\lra} {\overline B} 
\overset{\overline \beta}{\lra} A_-^\vee \lra 0\, ,  
\]
with ${\overline B} := {\overline B}_+ \oplus B_0 \oplus {\overline B}_+^\vee$, and such
that the components ${\overline \alpha}_+$, ${\overline \alpha}_0$, ${\overline \alpha}_-$
(resp., ${\overline \beta}_+$, ${\overline \beta}_0$, ${\overline \beta}_-$) of
$\overline \alpha$ (resp., $\overline \beta$) are deduced, using the diagram, from the
corresponding components of $\alpha$ (resp., $\beta$). One has an isomorphism of
complexes$\, :$ 
\[
\SelectTips{cm}{12}\xymatrix{A_-\ar[r]^-{\overline \alpha}\ar[d]_-{-\text{id}} &
{{\overline B}_+ \oplus B_0 \oplus {\overline B}_+^\vee}\ar[r]^-{\overline \beta}\ar[d]^-{\Phi} &
A_-^\vee\ar[d]^-{\text{id}}\\
A_-\ar[r]^-{{\overline \beta}^\vee} &
{{\overline B}_+^\vee \oplus B_0^\vee \oplus {\overline B}_+}\ar[r]^-{{\overline \alpha}^\vee}
& A_-^\vee}
\]
where
$\Phi = \left(\begin{smallmatrix} 0 & 0 & \text{id}\\ 0 & \phi & 0\\ -\text{id} & 0 & 0
\end{smallmatrix}\right)$,
that is, ${\overline \beta}_+ = -{\overline \alpha}_-^\vee$,
${\overline \beta}_- = {\overline \alpha}_+^\vee$ and
${\overline \beta}_0 = {\overline \alpha}_0^\vee \phi$. Notice that
${\overline \alpha}_0$ can be identified with $\alpha_0$ because $\alpha_0$ vanishes on
$A_{\geq 0}$ and ${\overline \beta}_0$ can be identified with $\beta_0$ because $\beta_0$
maps $B_0$ into $A_-^\vee$.

The preceeding monad is called a \emph{reduced semi-splitting monad} of $E$. 
\end{definition}   

The statement of the next result is inspired by a result of
Rao \cite[Thm.~2.4(ii)]{r}.

\begin{lemma}\label{L:splittingdouble}
Let $E$ be a rank $2$ vector bundle on $\piii$ with $c_1 = 0$ and consider 
a reduced semi-splitting monad of $E$ as in
Definition~\emph{\ref{D:semisplitmonad}}. Put
$c := c_1({\overline B}_+^\vee) - c_1(A_-)$. Let $B_0^\prime$ be a totally isotropic 
trivial subbundle of $B_0$ with respect to $\phi \colon B_0 \izo B_0^\vee$,
and let $\beta_0^\prime \colon B_0^\prime \ra A_-^\vee$ be the restriction of
$\beta_0$.

Assume, now, that the degeneracy locus of 
$({\overline \beta}_+ , \beta_0^\prime) \colon {\overline B}_+ \oplus B_0^\prime \ra A_-^\vee$ 
has \emph{(}the expected\emph{)} codimension $2$,
hence that it is a locally Cohen-Macaulay space curve $X$. Then $E(c)$ has
a global section whose zero scheme is a double structure $Y$ on $X$ such that the
ideal sheaf $\sci_Y$ is the kernel of an epimorphism
$\sci_X \ra \omega_X(4 - 2c)$. 
\end{lemma}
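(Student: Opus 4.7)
The plan is to unpack the rank hypothesis, construct the section from the kernel of $(\overline{\beta}_+ , \beta_0')$, and then identify its zero scheme with a double structure on $X$. The rank count comes first$\, :$ since $E$ has rank $2$, the monad forces $2\rk A_- + 2 = \rk \overline{B} = 2\rk \overline{B}_+ + \rk B_0$, hence $\rk A_- + 1 = \rk \overline{B}_+ + \tfrac{1}{2}\rk B_0$. For the degeneracy locus of $(\overline{\beta}_+ , \beta_0')$ to have the expected codimension $2$, I need $\rk(\overline{B}_+ \oplus B_0') = \rk A_-^\vee + 1$, i.e., $\rk B_0' = \tfrac{1}{2}\rk B_0$. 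Combined with the total-isotropy assumption this forces $B_0'$ to be a Lagrangian subbundle of $(B_0 , \phi)$, so I fix a Lagrangian complement $B_0 = B_0' \oplus B_0''$, with $\phi$ identifying $B_0'$ with $(B_0'')^\vee$.

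Next I construct the section. Let $N := \Ker((\overline{\beta}_+ , \beta_0'))$. Since the cokernel $\scf$ is supported on the codimension-$2$ Cohen-Macaulay curve $X$, $N$ is reflexive of rank $1$ on the smooth variety $\piii$, hence a line bundle; a Chern-class computation gives $c_1(N) = c_1(\overline{B}_+) + c_1(A_-) = -c$, so $N \simeq \sco(-c)$. The composite $\widetilde{s} \colon \sco(-c) \hookrightarrow \overline{B}_+ \oplus B_0' \hookrightarrow \overline{B}$ satisfies $\overline{\beta} \circ \widetilde{s} = 0$ and thus factors through $K := \Ker \overline{\beta}$, producing a section $s \in \tH^0(E(c))$. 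To see $s \neq 0$, I would observe that any factorization of $\widetilde{s}$ through $\overline{\alpha}(A_-)$ would force both $\overline{\alpha}_-$ and the $B_0''$-projection of $\overline{\alpha}_0$ to annihilate a common lift $u \colon \sco(-c) \ra A_-$, contradicting the minimality of the reduced semi-splitting monad. Consequently the zero scheme $Y$ of $s$ has codimension $2$ and fits into
\[
0 \lra \sco(-c) \lra E \lra \sci_Y(c) \lra 0 \, .
\]

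To show $Y$ is a double structure on $X$ with $\sci_X/\sci_Y \simeq \omega_X(4 - 2c)$, I would assemble the horizontal resolution $0 \ra \sco(-c) \ra \overline{B}_+ \oplus B_0' \ra A_-^\vee \ra \scf \ra 0$, the vertical monad sequence $0 \ra K \ra \overline{B} \ra A_-^\vee \ra 0$, and the complex dual to the first, $0 \ra A_- \ra \overline{B}_+^\vee \oplus (B_0')^\vee \ra \sco(c) \ra \sce xt^2(\scf , \sco) \ra 0$, obtained by applying $\sch om(- , \sco)$, into one commutative diagram. Local Serre--Grothendieck duality identifies $\sce xt^2(\scf , \sco)$ with the $\omega_X$-dual of $\scf$ twisted by $4$, and the fact that $\scf$ is a rank-$1$ Cohen-Macaulay sheaf on $X$ combined with Chern-class bookkeeping pins $\scf$ down as $\omega_X(4 - 2c)$. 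A snake-lemma diagonal in the combined diagram should then produce the desired epimorphism $\sci_X \twoheadrightarrow \omega_X(4 - 2c)$ with kernel $\sci_Y$. The main obstacle will be this final matching$\, :$ Chern classes alone fix $\scf$ only up to a twist, so identifying the kernel of the natural $\sci_X \ra \omega_X(4 - 2c)$ precisely with $\sci_Y$ requires a careful use of the self-duality relations $\overline{\beta}_+ = -\overline{\alpha}_-^\vee$, $\overline{\beta}_- = \overline{\alpha}_+^\vee$, $\overline{\beta}_0 = \overline{\alpha}_0^\vee \phi$ together with the Lagrangian splitting $B_0 = B_0' \oplus B_0''$. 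I expect that the self-duality and minimality of the reduced semi-splitting monad will force the needed compatibility.
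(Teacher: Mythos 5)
Your opening moves are sound and coincide with the paper's: $\Ker({\overline \beta}_+ , \beta_0^\prime) \simeq \sco_\piii(-c)$ by the rank/Chern-class count, the induced section of $E(c)$, and the two dual Eagon--Northcott (Hilbert--Burch) sequences identifying $\Cok(({\overline \beta}_+ , \beta_0^\prime)^\vee) \simeq \sci_X(c)$ and $\Cok({\overline \beta}_+ , \beta_0^\prime) \simeq \omega_X(4-c)$ (note a twist slip in your write-up: this cokernel is $\omega_X(4-c)$, not $\omega_X(4-2c)$; it is $\sci_X/\sci_Y$ that ends up being $\omega_X(4-2c)$). But the heart of the lemma is exactly the ``final matching'' you defer: showing that the image of $E \ra \sci_X(c)$ equals the kernel of $\sci_X(c) \ra \omega_X(4-c)$, so that this image is $\sci_Y(c)$ for the zero scheme $Y$ of your section. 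Saying you ``expect'' self-duality and minimality to force this is not an argument, and the appeals to minimality are unfounded on two counts: the reduced semi-splitting monad is not minimal in any usable sense (its term ${\overline B}_+$ is only locally free, not a sum of line bundles, so ``no constant entries'' has no meaning for it), and your nonvanishing argument for $s$ --- that a lift $u \colon \sco_\piii(-c) \ra A_-$ killed by ${\overline \alpha}_-$ and by the $B_0^{\prime\prime}$-component of ${\overline \alpha}_0$ would contradict minimality --- is simply asserted; nothing you invoke rules out such a $u$.

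What closes the gap (and is the paper's proof) is the one structural fact your proposal never uses: total isotropy of $B_0^\prime$ beyond the rank count. Because $\phi(B_0^\prime)$ is exactly the annihilator of $B_0^\prime$ in $B_0^\vee$, the symplectic isomorphism $M^\bullet \izo M^{\bullet \vee}$ of the reduced monad carries the subcomplex $K^\bullet \colon {\overline B}_+ \oplus B_0^\prime \xra{({\overline \beta}_+ , \beta_0^\prime)} A_-^\vee$ so as to induce an isomorphism $M^\bullet/K^\bullet \izo K^{\bullet \vee}$. Then the long exact cohomology sequence of $0 \ra K^\bullet \ra M^\bullet \ra M^\bullet/K^\bullet \ra 0$, combined with the two Eagon--Northcott sequences you already have, yields in one stroke the exact sequence $0 \ra \sco_\piii(-c) \ra E \ra \sci_X(c) \ra \omega_X(4-c) \ra 0$: injectivity of the section comes for free from $\mathcal{H}^{-1}(M^\bullet/K^\bullet) = \Ker(({\overline \beta}_+ , \beta_0^\prime)^\vee) = 0$ (no minimality needed), and the cokernel identification gives precisely the epimorphism $\sci_X \ra \omega_X(4-2c)$ with kernel $\sci_Y$. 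Without feeding the isotropy into the self-dual structure in this way, your proposed snake-lemma diagram has no mechanism to single out $\sci_Y$ inside $\sci_X$, which is the actual content of the lemma.
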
 

\begin{proof}
Recall that \emph{totally isotropic} means that the rank of $B_0^\prime$ is half the rank of
$B_0$ and that the composite map
$B_0^\prime\hookrightarrow B_0\overset{\phi}{\ra} B_0^\vee\twoheadrightarrow B_0^{\prime\vee}$ 
is the zero map. In particular, the rank of ${\overline B}_+ \oplus B_0^\prime$ equals
$\text{rk}\, A_-^\vee + 1$ because $E$ has rank $2$.
Let $M^\bullet$ denote the reduced semi-splitting monad of $E$ and let $K^\bullet$ be its
subcomplex$\, :$
\[
0 \lra {\overline B}_+ \oplus B_0^\prime
\xra{({\overline \beta}_+ , \beta_0^\prime)} A_-^\vee\, . 
\]
The isomorphism $M^\bullet \izo M^{\bullet \vee}$ at the end of Definition~\ref{D:semisplitmonad}
induces an isomorphism $M^\bullet/K^\bullet \izo K^{\bullet \vee}$. 
Now, using our hypothesis, the Eagon-Northcott complex associated to
$({\overline \beta}_+ , \beta_0^\prime)^\vee$ gives an exact sequence$\, :$ 
\begin{equation}\label{E:ix(c)} 
0 \lra A_- \xra{({\overline \beta}_+ , \beta_0^\prime)^\vee}
B_0^{\prime \vee} \oplus {\overline B}_+^\vee \lra \sci_X(c) \lra 0\, . 
\end{equation}
Dualizing it, one gets the exact sequence$\, :$
\[
0 \lra \sco_\piii(-c) \lra {\overline B}_+ \oplus B_0^\prime
\xra{({\overline \beta}_+ , \beta_0^\prime)} 
A_-^\vee \lra \omega_X(4-c) \lra 0\, . 
\]
The short exact sequence of complexes
$0 \ra K^\bullet \ra M^\bullet \ra M^\bullet/K^\bullet \ra 0$ produces, now, an
exact sequence$\, :$ 
\[
0 \lra \sco_\piii(-c) \lra E \lra \sci_X(c) \lra \omega_X(4-c) \lra 0\, . \qedhere 
\]
\end{proof}

\begin{corollary}\label{C:splittingdouble}
Under the hypothesis of Lemma~\emph{\ref{L:splittingdouble}} one has
${\fam0 h}^0(\sci_X(c)) = {\fam0 h}^0(B_0^{\prim \vee})$,
${\fam0 H}^0(\sci_X(c-1)) = 0$ and ${\fam0 H}^1(\sci_X(l)) = 0$ for $l \leq c-1$. 
\end{corollary}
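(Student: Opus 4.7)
The plan is to deduce all three assertions from the long exact cohomology sequence of the short exact sequence~\eqref{E:ix(c)} from the proof of Lemma~\ref{L:splittingdouble},
\[
0 \lra A_- \lra B_0^{\prim \vee} \op {\overline B}_+^\vee \lra \sci_X(c) \lra 0,
\]
suitably twisted by $l - c$. The main preparation is to extract the cohomology in nonpositive twists of the three flanking sheaves $A_-$, $B_0^{\prim \vee}$ and ${\overline B}_+^\vee$.

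First I would observe that, by construction, $A_-$ is a direct sum of line bundles $\sco_\piii(i)$ with $\tH^0(\sco_\piii(i)) = 0$, hence with $i < 0$, so $\tH^j(A_-(l)) = 0$ for $j \leq 2$ and every $l \leq 0$. Since $B_0$, and therefore $B_0^\prime$, is trivial, $\tH^0(B_0^{\prim \vee}(l)) = 0$ for $l \leq -1$ and $\tH^1(B_0^{\prim \vee}(l)) = 0$ for every $l$. The nontrivial input is the cohomology of ${\overline B}_+^\vee$. Here I would dualize the defining sequence $0 \ra A_{\geq 0} \ra B_+ \ra {\overline B}_+ \ra 0$ of Definition~\ref{D:semisplitmonad} to obtain
\[
0 \lra {\overline B}_+^\vee \lra B_+^\vee \lra A_{\geq 0}^\vee \lra 0.
\]
Since $B_+$ is a direct sum of line bundles of strictly positive degree, $\tH^0(B_+^\vee(l)) = \tH^1(B_+^\vee(l)) = 0$ for every $l \leq 0$. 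The long exact sequence then gives $\tH^0({\overline B}_+^\vee(l)) = 0$ for $l \leq 0$ and an isomorphism $\tH^1({\overline B}_+^\vee(l)) \simeq \tH^0(A_{\geq 0}^\vee(l))$ for $l \leq 0$. The defining condition $\tH^0(A_{\geq 0}^\vee(-1)) = 0$, combined with the degree bound on the summands of $A_{\geq 0}$, forces $\tH^0(A_{\geq 0}^\vee(l)) = 0$ for every $l \leq -1$, and hence $\tH^1({\overline B}_+^\vee(l)) = 0$ for $l \leq -1$.

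Substituting all these vanishings into the long cohomology sequence of~\eqref{E:ix(c)} twisted by $l - c$ would then finish the proof. At $l = c$, the flanking $\tH^0(A_-)$ and $\tH^1(A_-)$, together with $\tH^0({\overline B}_+^\vee)$, all vanish, yielding $\tH^0(\sci_X(c)) \simeq \tH^0(B_0^{\prim \vee})$; at $l = c - 1$ both summands in $\tH^0$ of the middle term vanish, giving $\tH^0(\sci_X(c-1)) = 0$; and for any $l \leq c - 1$ the groups $\tH^1(B_0^{\prim \vee}(l-c))$, $\tH^1({\overline B}_+^\vee(l-c))$ and $\tH^2(A_-(l-c))$ all vanish, producing $\tH^1(\sci_X(l)) = 0$. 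The only mild obstacle is the bookkeeping with the degree constraints on $A_{\geq 0}$ and $A_-$ imposed by the reduction of the semi-splitting monad; once those are in place, the remainder is a routine diagram chase.
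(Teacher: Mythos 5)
Your proposal is correct and follows exactly the paper's route: the paper's proof consists precisely of invoking the exact sequence \eqref{E:ix(c)} together with the dual sequence $0 \ra {\overline B}_+^\vee \ra B_+^\vee \ra A_{\geq 0}^\vee \ra 0$, and your argument simply spells out the resulting cohomological bookkeeping, with all the vanishings ($\tH^0(A_-(l)) = 0$ for $l \leq 0$, $\tH^0({\overline B}_+^\vee(l)) = 0$ for $l \leq 0$, $\tH^1({\overline B}_+^\vee(l)) \simeq \tH^0(A_{\geq 0}^\vee(l)) = 0$ for $l \leq -1$) correctly justified from the degree constraints on $A_-$, $A_{\geq 0}$ and $B_+$.
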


\begin{proof}
One uses the exact sequence \eqref{E:ix(c)} in the above proof of Lemma~\ref{L:splittingdouble}
and the dual of the defining exact sequence$\, :$
\[
0 \lra A_{\geq 0} \lra B_+ \lra {\overline B}_+ \lra 0\, . \qedhere 
\]
\end{proof}

\section{Spectra and monads}\label{S:spectra}

We recall, in this section, Hartshorne's algebraic definition \cite[Thm.~7.1]{ha} 
of the spectrum of a stable rank 2 vector bundle $E$ on $\piii$ with $c_1 = 0$ (the
original definition of Barth and Elencwajg is geometric). We then show that the
spectrum imposes certain (not necessarily optimal) restrictions on the shape of the
minimal monad of the bundle. Our results improve and extends previous results of
Barth \cite{ba} and Hartshorne and Rao \cite[Prop.~3.1]{har} who limited themselves
to the case where the graded $S$-module $\tH^1_\ast(E)$ is generated in negative
degrees, i.e., to the case where the right hand term of the monad is a direct sum
of positive line bundles.

Hartshorne's definition of the spectrum is based on the following technical result
(see \cite[Thm.~5.3]{ha}). The difficult part of this theorem is assertion (a) for
$i = -1\, ;$ the remaining assertions follow easily from the Bilinear Map Lemma
\cite[Lemma~5.1]{ha} by decreasing induction on $i$. A different proof of the former
assertion, based on Beilinson's theorem, can be found in \cite[Thm.~A.1]{co}.

\begin{theorem}\label{T:ninh1f}
Let $F$ be a stable rank $2$ vector bundle on $\pii$ with $c_1 = 0$, let $R$ be the
homogeneous coordinate ring of $\pii$, and let $N$ be a graded $R$-submodule of
${\fam0 H}^1_\ast(F)$. Put $n_i := \dim_k N_i$, $i \in \z$. Then$\, :$

\emph{(a)} $n_i > n_{i-1}$ if $N_{i-1} \neq 0$, $\forall \, i \leq -1$, except when
$i = -1$, $N_{-2} = {\fam0 H}^1(F(-2))$ and $N_{-1} = {\fam0 H}^1(F(-1))$ $($recall
that ${\fam0 h}^1(F(-2)) = c_2 = {\fam0 h}^1(F(-1)))$.

\emph{(b)} If $N_{i-1} \neq 0$ and $n_i - n_{i-1} = 1$, for some $i \leq -2$, then
there exists a non-zero linear form $\ell \in R_1$ such that $\ell N_j = 0$ in
${\fam0 H}^1(F(j+1))$, $\forall \, j \leq i$. 
\end{theorem}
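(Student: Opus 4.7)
The plan is decreasing induction on $i$, with (a) at $i = -1$ as the (difficult) base case and the remaining indices handled by the Bilinear Map Lemma \cite[Lemma~5.1]{ha}, from which (b) falls out as the extremal case.

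For the inductive step, fix $i \leq -2$ and assume (a), (b) known at larger indices. Applying the Bilinear Map Lemma to $\mu \colon R_1 \otimes_k N_{i-1} \to N_i$ (with $\dim_k R_1 = 3$), either $\dim \mu(R_1 \otimes N_{i-1}) \geq n_{i-1} + 2$---already yielding (a)---or one of the two degeneracies occurs: (D1) some $0 \neq \ell \in R_1$ annihilates $N_{i-1}$, or (D2) some $0 \neq x \in N_{i-1}$ satisfies $R_1 x = 0$ in $\tH^1(F(i))$. Case (D2) puts $x$ in the socle of $\tH^1_\ast(F)$ at degree $i-1 \leq -3$, which I would eliminate via the self-duality $F \simeq F^\vee$ (Serre duality identifies such a socle element with a minimal generator of $\tH^1_\ast(F)$ in degree $-i-2 \geq 0$) combined with the inductive framework. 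In case (D1), a second application of BML to the induced map $R_1/\langle \ell \rangle \otimes N_{i-1} \to N_i$ shows $n_i \geq n_{i-1} + 1$, proving (a); equality is precisely the hypothesis of (b), and the $\ell$ just produced is the required linear form. To propagate $\ell N_j = 0$ to all $j \leq i$, I form the graded submodule $K \subset \tH^1_\ast(F)$ with $K_j := \ker(\ell \colon \tH^1(F(j)) \to \tH^1(F(j+1)))$---genuinely a submodule because $\ell(\ell y) = 0$---set $M := N \cap K$, use the identity $\ell \cdot \mu(\ell' \otimes x) = \ell'(\ell x) = 0$ to see $\ell$ kills $R_1 \cdot N_{i-1}$ in $N_{i+1}$ as well, and then compare $M$ with $N$ at all relevant degrees via (a) to force $M_j = N_j$.

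The base case (a) at $i = -1$ is the heart of the matter; naive BML does not suffice, since $n_{-1} = n_{-2}$ is allowed in the stated exception. I would invoke Beilinson's theorem on $\pii$. Stability of $F$ with $c_1 = 0$ gives $\tH^0(F) = \tH^0(F(-1)) = 0$ (a nonzero section of $F(-1)$ would embed $\sco(1) \hookrightarrow F$, against stability), and Serre duality together with $F \simeq F^\vee$ yields $\tH^2(F(-2)) = \tH^2(F(-3)) = 0$. The Beilinson spectral sequence for $F$ then collapses to a short monadic resolution of $F$ whose two differentials realize, respectively, the multiplications $R_1 \otimes \tH^1(F(-2)) \to \tH^1(F(-1))$ and $R_1 \otimes \tH^1(F(-1)) \to \tH^1(F)$. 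Exactness of this monad combined with the perfect Serre-duality pairing $\tH^1(F(-2)) \times \tH^1(F(-1)) \to \tH^2(F(-3)) = k$ then forces: if $n_{-1} \leq n_{-2}$ and $N_{-2} \neq 0$, both $N_{-2}$ and $N_{-1}$ must saturate the ambient groups, i.e., we are in the stated exception. (As the excerpt notes, a Beilinson-based proof is carried out in \cite[Thm.~A.1]{co}.)

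The main expected obstacle is exactly this base case. The inductive step and the propagation of the linear form for (b) are essentially linear algebra once BML is in hand; the base case, by contrast, requires global input from the rank-$2$, $c_1 = 0$ self-duality and from the Beilinson representation of $F$, without which the exceptional configuration cannot be singled out from a generic failure of strict monotonicity.
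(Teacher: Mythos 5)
You should first note that the paper does not prove Theorem~\ref{T:ninh1f} at all: it is quoted from Hartshorne \cite[Thm.~5.3]{ha}, with the remark that everything except assertion (a) at $i=-1$ follows from the Bilinear Map Lemma by decreasing induction, the hard case having an alternative Beilinson-based proof in \cite[Thm.~A.1]{co}. Your outline follows exactly that indicated route, but as an actual proof it has a genuine gap at the crucial point of the inductive step. The failure of nondegeneracy in the Bilinear Map Lemma applied to $R_1\otimes_k N_{i-1}\to N_i$ yields only a \emph{single} pair of nonzero elements $\ell\in R_1$, $x\in N_{i-1}$ with $\ell x=0$; it does not yield your dichotomy (D1)/(D2). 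Case (D2) is indeed impossible, but for the elementary reason of the two-independent-forms argument (the $\pii$-analogue used in Lemma~\ref{L:xi}, valid because $\tH^0(F(i+1))=0$ for $i\leq -1$), not by the duality/socle remark you sketch. Case (D1) --- that one and the same $\ell$ annihilates \emph{all} of $N_{i-1}$ --- is precisely the nontrivial conclusion (it is part (b) in degree $i-1$), and it does not follow from the existence of one degenerate pair: a priori different elements of $N_{i-1}$ may be killed by different linear forms. Your second application of the Bilinear Map Lemma, to $(R_1/\langle\ell\rangle)\otimes N_{i-1}\to N_i$, is only legitimate once (D1) is known. Consequently neither the strict inequality in (a) for $i\leq -2$ nor the existence of the single form $\ell$ in (b) is established; this is exactly where Hartshorne's argument does real work (restriction to the jumping line $\ell=0$ and dimension counts for $F_L$, in the spirit of the paper's Lemma~\ref{L:sj=1}).

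Two further steps are asserted rather than proved. First, the downward propagation $\ell N_j=0$ for all $j\leq i$: even granting $\ell N_{i-1}=0$, your identity only gives that $\ell$ kills $R_1N_{i-1}\subseteq N_i$, which need not exhaust $N_i$, and for $j<i-1$ the components $N_j$ have no a priori relation to $N_{i-1}$; ``compare $M$ with $N$ at all relevant degrees via (a) to force $M_j=N_j$'' is not yet an argument. Second, the base case $i=-1$, which you correctly identify as the heart of the matter, is reduced to the phrase that exactness of the Beilinson monad together with the Serre-duality pairing ``forces'' the exceptional configuration; that is the difficult step, and it is not carried out. Since the paper itself deliberately defers both the full statement to \cite[Thm.~5.3]{ha} and the hard case to \cite[Thm.~A.1]{co}, your proposal is acceptable as a reading of the intended strategy, but not as a self-contained proof: the (D1) reduction and the $i=-1$ case would have to be either proved or explicitly cited.
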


\begin{corollary}\label{C:h1fraq}
Under the hypothesis of Theorem~\emph{\ref{T:ninh1f}}, consider the quotient
$R$-module $Q := {\fam0 H}^1_\ast(F)/N$ and put
$q_i := \dim_k Q_i = \dim_k({\fam0 H}^1(F(i))/N_i)$, $i \in \z$. Then$\, :$

\emph{(a)} $q_i > q_{i+1}$ if $Q_{i+1} \neq 0$, $\forall \, i \geq -1$.

\emph{(b)} If $Q_{i+1} \neq 0$ and $q_i - q_{i+1} = 1$, for some $i \geq -1$, then
there exists a non-zero linear form $\ell \in R_1$ such that multiplication by
$\ell \colon Q_{j-1} \ra Q_j$ is the zero map, $\forall \, j \geq i$. 
\end{corollary}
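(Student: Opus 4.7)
The plan is to deduce the corollary from Theorem~\ref{T:ninh1f} via a Serre duality argument which converts the quotient $Q$ into a submodule of $\tH^1_\ast(F)$. Since $F$ is stable of rank $2$ on $\pii$ with $c_1 = 0$, one has $F \simeq F^\vee$, so Serre duality provides, for every $i \in \z$, a perfect pairing $\tH^1(F(i)) \times \tH^1(F(-i-3)) \ra k$ that is compatible with the $R$-module structure, i.e., $\langle \ell x , y\rangle = \pm \langle x , \ell y\rangle$ for every $\ell \in R_1$, $x \in \tH^1(F(i))$ and $y \in \tH^1(F(-i-4))$. I would define $N^\perp \subseteq \tH^1_\ast(F)$ by letting $(N^\perp)_i$ be the annihilator of $N_{-i-3}$ under this pairing; the adjunction identity then forces $N^\perp$ to be a graded $R$-submodule, and perfectness of the pairing combined with $\h^1(F(i)) = \h^1(F(-i-3))$ gives
\[
n^\prime_i := \dim_k (N^\perp)_i = \h^1(F(-i-3)) - \dim_k N_{-i-3} = q_{-i-3} \, .
\]

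With this in hand one applies Theorem~\ref{T:ninh1f} to $N^\perp$ and translates indices via $j = -i-3$. Part~(a) gives $n^\prime_j > n^\prime_{j-1}$ whenever $(N^\perp)_{j-1} \neq 0$ and $j \leq -1$, with an exception only at $j = -1$. The substitution turns this into $q_i > q_{i+1}$ whenever $Q_{i+1} \neq 0$, for $i \geq -2$; the exceptional value $j = -1$ corresponds to $i = -2$, which is excluded by the restriction $i \geq -1$ in the corollary, yielding assertion (a). For (b), applying Theorem~\ref{T:ninh1f}(b) to $N^\perp$ at $j = -i-3 \leq -2$ (so $i \geq -1$) produces a non-zero $\ell \in R_1$ with $\ell \cdot (N^\perp)_{j^\prime} = 0$ in $\tH^1(F(j^\prime+1))$ for all $j^\prime \leq j$. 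Transposing this condition through the pairing and using that the double annihilator of the finite-dimensional subspace $N_{-j^\prime-3}$ is itself, it becomes $\ell \cdot \tH^1(F(-j^\prime-4)) \subseteq N_{-j^\prime-3}$; setting $k := -j^\prime - 4$, this is exactly the assertion that multiplication by $\ell \colon Q_k \ra Q_{k+1}$ is zero for all $k \geq i-1$, i.e., $\ell \colon Q_{j-1} \ra Q_j$ is zero for all $j \geq i$, which is the conclusion of (b).

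The only delicate point is the compatibility of the Serre duality pairing with multiplication by linear forms from $R$; this is standard (it comes from the fact that cup product with $\ell \in \tH^0(\sco_\pii(1))$ is self-adjoint with respect to the Serre trace), and once it is granted the corollary reduces to the bookkeeping above. I do not expect any obstacle beyond keeping the index shifts straight.
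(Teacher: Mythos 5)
Your proof is correct and is essentially the paper's own argument: your $N^\perp$ is exactly the submodule $Q^\vee(3) \subseteq \tH^1_\ast(F)$ that the paper obtains by embedding $Q_j^\vee$ into $\tH^1(F(j))^\vee \simeq \tH^1(F(-j-3))$ via Serre duality and $F \simeq F^\vee$, and then applying Theorem~\ref{T:ninh1f} to it. You merely make explicit the index bookkeeping and the compatibility of the pairing with multiplication by linear forms, which the paper leaves implicit.
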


\begin{proof}
$Q_j^\vee$ embeds into $\tH^1(F(j))^\vee$ which, by Serre duality and the fact that
$F^\vee \simeq F$, is isomorphic to $\tH^1(F(-j-3))$. One can apply, now,
Theorem~\ref{T:ninh1f} to the graded $R$-submodule $Q^\vee(3)$ of $\tH^1_\ast(F)$.
Recall that $Q^\vee : = \bigoplus_{j \in \z}(Q_{-j})^\vee$.   
\end{proof}

As a supplement to Theorem~\ref{T:ninh1f}, we notice that the linear form $\ell$
from item (b) of the statement of that theorem is \emph{unique}
(up to multiplication by a non-zero constant), as the following easy lemma shows.

\begin{lemma}\label{L:xi}
Let $F$ be a stable rank $2$ vector bundle on $\pii$ with $c_1 = 0$, let
$\ell ,\, \ell^\prime \in R_1$ be two linearly independent linear forms, and let
$\xi$ be an element of ${\fam0 H}^1(F(j))$, for some $j \leq -2$. If $\ell \xi = 0$
and $\ell^\prime \xi = 0$ in ${\fam0 H}^1(F(j+1))$ then $\xi = 0$. 
\end{lemma}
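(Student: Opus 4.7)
The plan is to argue by contradiction using Theorem~\ref{T:ninh1f}(a), applied to the cyclic graded $R$-submodule of $\tH^1_\ast(F)$ generated by $\xi$. I would assume $\xi \neq 0$ and set $N := R\cdot \xi \subset \tH^1_\ast(F)$. Then $N_j = k\xi$, so $n_j = 1$, while $N_{j+1} = R_1\xi$ is the image of the linear map $R_1 \to \tH^1(F(j+1))$, $r \mapsto r\xi$. The hypothesis that both $\ell$ and $\ell^\prime$ annihilate $\xi$ means this map vanishes on a two-dimensional subspace of $R_1$, so its image has dimension at most $\dim_k R_1 - 2 = 1$. Thus $n_{j+1} \leq 1 = n_j$.

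Next I would invoke Theorem~\ref{T:ninh1f}(a) with $i := j+1$, which is permissible since $j+1 \leq -1$ and $N_j \neq 0$. The conclusion $n_{j+1} > n_j = 1$ contradicts the bound from the previous paragraph, unless we fall into the explicit exception $j+1 = -1$, $N_{-2} = \tH^1(F(-2))$, $N_{-1} = \tH^1(F(-1))$. In that exceptional case $\h^1(F(-2)) = \dim_k N_{-2} = 1$, i.e., $c_2 = 1$.

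The only real obstacle is therefore to rule out $c_2 = 1$ for a stable rank $2$ vector bundle on $\pii$ with $c_1 = 0$. This is a short Riemann-Roch computation: one has $\chi(F) = 2 - c_2$, while stability gives $\h^0(F) = 0$ and, via Serre duality together with $F \simeq F^\vee$, also $\h^2(F) = \h^0(F(-3)) = 0$; hence $\h^1(F) = c_2 - 2 \geq 0$, so $c_2 \geq 2$. This excludes the exceptional case and forces $\xi = 0$.
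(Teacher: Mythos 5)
Your proof is correct, but it follows a genuinely different route from the paper's. The paper argues directly and elementarily: letting $x \in \pii$ be the point cut out by $\ell = \ell^\prime = 0$, it tensors the Koszul-type sequence $0 \ra \sco_\pii(-1) \ra 2\sco_\pii \ra \sci_{\{x\}}(1) \ra 0$ with $F(j+1)$ and reads off that the kernel of $(\ell , \ell^\prime)^{\text{t}} \colon \tH^1(F(j)) \ra 2\tH^1(F(j+1))$ is a quotient of $\tH^0(\sci_{\{x\}} \otimes F(j+2))$, which vanishes because $\tH^0(F(j+2)) = 0$ for $j \leq -2$ by stability; no growth theorem and no bound on $c_2$ is needed. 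You instead feed the cyclic submodule $R\xi$ into Theorem~\ref{T:ninh1f}(a): the count $n_{j+1} = \dim_k R_1\xi \leq \dim_k R_1 - 2 = 1$ against the strict growth $n_{j+1} > n_j = 1$ is right, the exceptional case correctly forces $j = -2$ and $\h^1(F(-2)) = 1$, i.e. $c_2 = 1$, and your Riemann--Roch exclusion of $c_2 = 1$ ($\chi(F) = 2 - c_2$, $\h^0(F) = \h^2(F) = 0$) is standard and sound. There is no circularity, since Theorem~\ref{T:ninh1f} is quoted from Hartshorne and does not depend on Lemma~\ref{L:xi}. What the comparison buys: your argument is a quick corollary of an already-stated result, but it invokes the hard part of that theorem (whose difficult case rests on Beilinson's theorem or the Bilinear Map Lemma) and needs the extra step $c_2 \geq 2$, whereas the paper's proof is self-contained, uses only $\tH^0(F(j+2)) = 0$, and in fact isolates the precise mechanism (two independent forms cutting out a point) that makes the uniqueness of $\ell$ in Theorem~\ref{T:ninh1f}(b) transparent.
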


\begin{proof}
Let $x \in \pii$ be the point of equations $\ell = \ell^\prime = 0$. Tensorizing by
$F(j+1)$ the exact sequence
$0 \ra \sco_\pii(-1) \ra 2\sco_\pii \ra \sci_{\{x\}}(1) \ra 0$ one gets an exact
sequence$\, :$
\[
\tH^0(\sci_{\{x\}} \otimes F(j+2)) \lra \tH^1(F(j))
\xra{(\ell\, ,\, \ell^\prime)^{\text{t}}} 2\tH^1(F(j+1))\, . 
\]
One uses, now, the fact that $\tH^0(F(j+2)) = 0$ because $j \leq -2$. 
\end{proof}

We recall, now, Hartshorne's definition of the spectrum of a stable rank 2 vector
bundle on $\piii$.

\begin{definition}\label{D:spectrum}
Let $E$ be a stable rank 2 vector bundle on $\piii$ with $c_1 = 0$ and
$c_2  \geq 2$. Let $H \subset \piii$ be a plane of equation $h = 0$ such that
$E_H$ is stable. Put$\, :$
\begin{gather*}
N := \text{Im}(\tH^1_\ast(E) \lra \tH^1_\ast(E_H)) \simeq
\Cok (\tH^1_\ast(E(-1)) \overset{h}{\lra} \tH^1_\ast(E))\, ,\\
Q := \Cok (\tH^1_\ast(E) \lra \tH^1_\ast(E_H)) \simeq
\Ker (\tH^2_\ast(E(-1)) \overset{h}{\lra} \tH^2_\ast(E))\, ,
\end{gather*}
$n_i := \dim_k N_i$ and $q_i := \dim_k Q_i$, $i \in \z$. Notice that$\, :$
\[
n_i = \h^1(E(i)) - \h^1(E(i-1)) \text{ if } i \leq 0\, ,\
q_i = \h^2(E(i-1)) - \h^2(E(i)) \text{ if } i \geq -3\, , 
\]
hence these integers do not depend on the choice of the plane $H$ (provided that
$E_H$ is stable). Notice, also, that $Q = \tH^1_\ast(E_H)/N$. Moreover, by Serre
duality and the fact that $E^\vee \simeq E$, one has
$\tH^1_\ast(E)^\vee \simeq (\tH^2_\ast(E(-1))(-3)$ hence $N^\vee \simeq Q(-3)$ hence
$Q \simeq N^\vee(3)$.

One defines, now, the \emph{spectrum} of $E$ as the function $s \colon \z \ra \z$
given by the formula$\, :$
\[
s(i) :=
\begin{cases}
n_{-i-1} - n_{-i-2}\, , &\text{if $i \geq 0$;}\\
q_{-i-2} - q_{-i-1}\, , &\text{if $i \leq -1$.}  
\end{cases}  
\]
Since $q_{-2} + n_{-2} = \h^1(E_H(-2)) = c_2 = \h^1(E_H(-1)) = q_{-1} + n_{-1}$ it
follows that the relation $s(i) = q_{-i-2} - q_{-i-1}$ is valid for $i = 0$, too.
%It is customary to say, in concrete cases, that \emph{the spectrum of} $E$ \emph{is}
%$(\ldots , 0^{s(0)} , 1^{s(1)}, 2^{s(2)}, \ldots)$. 
Consider the nonnegative integer$\, :$
\[
m := \text{max}\, \{l \geq 0 \, \vert \, \tH^1(E(-l-1)) \neq 0\} =
\text{max}\, \{l \geq 0 \, \vert \, \tH^2(E(l-3)) \neq 0\}\, . 
\]

\noindent
{\bf Claim.}\quad (a) $s(i) = 0$ \emph{for} $i < -m$ \emph{and for} $i > m$.

(b) $s(i) \geq 1$ \emph{for} $-m \leq i \leq m$.

\vskip2mm

\noindent
\emph{Proof of the claim}.  
(a) One has $N_i = 0$ for $i \leq -m-2$ and $Q_i = 0$ for $i \geq m-1$, by the
definition of $m$.

(b) One has $N_{-m-1} \simeq \tH^1(E(-m-1))$ and $Q_{m-2} \simeq \tH^2(E(m-3))$.
If $m = 0$ the assertion follows from the fact that $\tH^1(E(-1)) \neq 0$, which
can be deduced from Riemann-Roch. Assume, now, that $m \geq 1$. Since
$N_{-m-1} \neq 0$ (resp., $Q_{m-2} \neq 0$), Theorem~\ref{T:ninh1f}(a) (resp.,
Corollary~\ref{C:h1fraq}(a)) implies that $n_{-m-1} < n_{-m} < \cdots < n_{-2}$
(resp., $q_{-1} > q_0 > \cdots > q_{m-2}$). Moreover, since $Q_{-1} \neq 0$ it follows
that $N_{-1} \neq \tH^1(E_H(-1))$ hence, by Theorem~\ref{T:ninh1f}(a),
$n_{-2} < n_{-1}$. \hfill $\Diamond$ 

\vskip2mm 

Considering, now, the vector bundle $K_E := \bigoplus_{i \in \z}s(i)\sco_\pj(i)$ on
$\pj$ one sees that the spectrum characterizes half of the Hilbert functions of the
intermediate cohomology modules of $E$ according to the relations$\, :$
\begin{enumerate}
\item[(i)] $\h^1(E(l)) = \h^0(K_E(l+1))$, for $l \leq -1\, ;$
\item[(ii)] $\h^2(E(l)) = \h^1(K_E(l+1))$, for $l \geq -3$.  
\end{enumerate}
One also has the relation$\, :$
\begin{enumerate}
\item[(iii)] ${\textstyle \sum_{i \in \z}}s(i) = q_{-1} + n_{-1} = \h^1(E_H(-1)) = c_2$.
\end{enumerate}
Moreover, since $Q \simeq N^\vee(3)$, it follows that the spectrum is symmetric, 
that is$\, :$
\begin{enumerate}
\item[(iv)] $s(-i) = q_{i-2} - q_{i-1} = n_{-i-1} - n_{-i-2} = s(i)$,
$\forall \, i \geq 0$. 
\end{enumerate}
\end{definition}

\begin{lemma}\label{L:Delta2h0}
Using the notation from Definition~\emph{\ref{D:spectrum}}, let $c \geq 1$ be an integer such
that $E(c)$ has a global section whose zero scheme is a double structure $Y$ on a locally
Cohen-Macaulay curve $X$ such that $\sci_Y$ is the kernel of an epimorphism
$\sci_X \ra \omega_X(4-2c)$. Assume, also, that ${\fam0 H}^i(\sci_X(l)) = 0$ for $l \leq c-1$,
$i = 0,\, 1$. Then
\[
s(i) = (\Delta^2{\fam0 h}^0_X)(i+c-1)\, , \  \forall \, i \geq 0\, ,
\]
where ${\fam0 h}^0_X \colon \z \ra \z$ is defined by
${\fam0 h}^0_X(i) := {\fam0 h}^0(\sco_X(i))$. 
\end{lemma}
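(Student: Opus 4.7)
The plan is to read $\tH^1(E(l))$ off the cohomology of $X$ by exploiting the section of $E(c)$ together with the given presentation of $\sci_Y$. The section of $E(c)$ with zero scheme $Y$ gives, after twisting by $-c$, the exact sequence
\[
(\ast) \colon \ 0 \to \sco_\piii(-c) \to E \to \sci_Y(c) \to 0,
\]
while twisting the given sequence $0 \to \sci_Y \to \sci_X \to \omega_X(4-2c) \to 0$ by $c$ yields
\[
(\ast\ast) \colon \ 0 \to \sci_Y(c) \to \sci_X(c) \to \omega_X(4-c) \to 0.
\]

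The main claim is that $\h^1(E(l)) = \h^1(\sco_X(c-l-4))$ for every $l \leq -1$. First, since $\tH^1(\sco_\piii(m)) = \tH^2(\sco_\piii(m)) = 0$ for every $m$, the long exact sequence of $(\ast)$ twisted by $l$ gives $\tH^1(E(l)) \simeq \tH^1(\sci_Y(l+c))$ for all $l$. Next, the hypothesis $\tH^0(\sci_X(n)) = \tH^1(\sci_X(n)) = 0$ for $n \leq c-1$, applied with $n = l+c$ (which is $\leq c-1$ since $l \leq -1$), and the long exact sequence of $(\ast\ast)$ twisted by $l$, yield $\tH^1(\sci_Y(l+c)) \simeq \tH^0(\omega_X(l+4-c))$. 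Finally, Serre duality on the Cohen--Macaulay curve $X$ gives $\h^0(\omega_X(l+4-c)) = \h^1(\sco_X(c-l-4))$, proving the claim.

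Since $s(i) = (\Delta^2\h^1_E)(-i-1)$ for $i \geq 0$ (immediate from $s(i) = n_{-i-1} - n_{-i-2}$ combined with $n_j = \h^1(E(j)) - \h^1(E(j-1))$ for $j \leq 0$, as recorded in Definition~\ref{D:spectrum}), substituting $l = -i-1,\, -i-2,\, -i-3$---all of which are $\leq -1$---into the claim rewrites $s(i)$ as $(\Delta^2\h^1_X)(i+c-1)$, where $\h^1_X(m) := \h^1(\sco_X(m))$. To conclude, note that $\chi(\sco_X(m)) = \h^0_X(m) - \h^1_X(m)$ is a polynomial of degree $1$ in $m$, so $\Delta^2\chi(\sco_X) \equiv 0$, which forces $\Delta^2\h^0_X = \Delta^2\h^1_X$, and the lemma follows.

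There is no serious obstacle here: once the two sequences $(\ast)$ and $(\ast\ast)$ are in hand---both already implicit in the proof of Lemma~\ref{L:splittingdouble}---the argument is the standard dictionary between $\tH^1$ of $E$ in negative degrees and the postulation of $X$. The only point to verify carefully is that the twist shift is balanced exactly so that the three arguments $l = -i-1,-i-2,-i-3$ all lie in the range $l \leq -1$ where the vanishing hypothesis applies, which is automatic for $i \geq 0$.
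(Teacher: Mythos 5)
Your proof is correct and follows essentially the same route as the paper: the two exact sequences $0 \to \sco_\piii(-c) \to E \to \sci_Y(c) \to 0$ and $0 \to \sci_Y(c) \to \sci_X(c) \to \omega_X(4-c) \to 0$, the vanishing hypothesis on $\sci_X$, and Serre duality give $\h^1(E(-i-1)) = \h^1(\sco_X(i+c-3))$, and then Riemann--Roch converts $\Delta^2\h^1_X$ into $\Delta^2\h^0_X$ exactly as in the paper's argument. No issues.
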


\begin{proof}
Using the exact sequences$\, :$
\[
0 \ra \sco_\piii(-c) \ra E \ra \sci_Y(c) \ra 0\, ,\
0 \ra \sci_Y \ra \sci_X \ra \omega_X(4-2c) \ra 0\, , 
\]
one deduces that, $\forall \, i \geq 0\, :$
\[
\h^1(E(-i-1)) = \h^1(\sci_Y(c-i-1)) = \h^0(\omega_X(3-c-i)) =
\h^1(\sco_X(i+c-3))\, . 
\]
It follows that $n_{-i-1} = \h^1(\sco_X(i+c-3)) - \h^1(\sco_X(i+c-2))$ hence$\, :$
\[
s(i) := n_{-i-1} - n_{-i-2} = (\Delta^2\h^1_X)(i+c-1)\, , \forall \, i \geq 0\, . 
\]
But, by Riemann-Roch, $\Delta^2(\h^0_X - \h^1_X) = 0$. 
\end{proof}  

The next two lemmas will be used in the proof of the main result of this section,
Theorem~\ref{T:rholeqs-1} below.

\begin{lemma}\label{L:unstable}
With the notation from Definition~\emph{\ref{D:spectrum}}, assume that $m \geq 1$. 
Let $H \subset \piii$ be a plane such that $E_H$ is stable.  
If there exists a non-zero linear form
$\ell \in {\fam0 H}^0(\sco_H(1))$ such that $\ell N_{-m-1} = 0$ in
${\fam0 H}^1(E(-m))$ then there exists a plane $H_0 \subset \piii$ such that
${\fam0 H}^0(E_{H_0}(-m)) \neq 0$. 
\end{lemma}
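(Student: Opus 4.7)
The plan is to produce from the given $\ell$ an explicit linear form $\ell^\prime \in S_1$ of the form $\widetilde{\ell} - \lambda h$ whose vanishing locus can be taken as $H_0$; the key input is an eigenvalue argument applied to an endomorphism of $\tH^1(E(-m-1))$ manufactured from the hypothesis.

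First I would lift $\ell \in \tH^0(\sco_H(1)) = S_1/kh$ to a linear form $\widetilde{\ell} \in S_1$; since $\ell \neq 0$, $\widetilde{\ell}$ is not a scalar multiple of $h$. Restriction to $H$ yields a commutative square relating multiplication by $\widetilde{\ell}$ on $\tH^1_\ast(E)$ with multiplication by $\ell$ on $\tH^1_\ast(E_H)$. Because $\tH^1(E(-m-2)) = 0$ by the definition of $m$, the restriction map $\tH^1(E(-m-1)) \to N_{-m-1}$ is an isomorphism; the hypothesis $\ell N_{-m-1} = 0$, combined with the identification of $\Ker(\tH^1(E(-m)) \to \tH^1(E_H(-m)))$ with the image of $h \colon \tH^1(E(-m-1)) \to \tH^1(E(-m))$, therefore forces
\[
\widetilde{\ell} \cdot \tH^1(E(-m-1)) \subseteq h \cdot \tH^1(E(-m-1)) \text{ inside } \tH^1(E(-m))\, .
\]

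Next I would observe that multiplication by $h \colon \tH^1(E(-m-1)) \to \tH^1(E(-m))$ is injective: this follows from the stability of $E_H$, which gives $\tH^0(E_H(-m)) = 0$ (since $-m \leq -1$ as $m \geq 1$). Consequently, the rule $h \cdot \phi(\xi) = \widetilde{\ell} \cdot \xi$ defines a $k$-linear endomorphism $\phi$ of the finite-dimensional nonzero vector space $\tH^1(E(-m-1))$. Since $k$ is algebraically closed, $\phi$ admits an eigenvector $\xi \neq 0$ with eigenvalue $\lambda \in k$, and then $\ell^\prime := \widetilde{\ell} - \lambda h \in S_1$ annihilates $\xi$ in $\tH^1(E(-m))$. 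The linear form $\ell^\prime$ is nonzero because $\widetilde{\ell} \notin kh$.

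Finally I would set $H_0 := V(\ell^\prime) \subset \piii$ and exploit the short exact sequence
\[
0 \lra E(-m-1) \xra{\ell^\prime} E(-m) \lra E_{H_0}(-m) \lra 0\, .
\]
Combined with $\tH^0(E(-m)) = 0$ (stability of $E$), this identifies $\tH^0(E_{H_0}(-m))$ with $\Ker(\ell^\prime \colon \tH^1(E(-m-1)) \to \tH^1(E(-m)))$, which contains $\xi \neq 0$. The only real obstacle is the bookkeeping: verifying that $\phi$ is well-defined (which is why the hypothesis $m \geq 1$ is needed, via the injectivity of $h$) and that $\ell^\prime$ is nonzero (which is why one must lift a nonzero class modulo $h$); once these are in hand, the eigenvalue argument closes the proof immediately.
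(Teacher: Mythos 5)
Your proposal is correct and follows essentially the same route as the paper: lift $\ell$ to a linear form on $\piii$, translate the hypothesis into $\widetilde{\ell}\cdot \tH^1(E(-m-1)) \subseteq h\cdot \tH^1(E(-m-1))$, use injectivity of multiplication by $h$ (from $\tH^0(E_H(-m))=0$) to get an endomorphism of $\tH^1(E(-m-1))$, and take an eigenvalue $c$ so that $h_0 := \widetilde{\ell}-ch$ kills a nonzero class, whence $\tH^0(E_{H_0}(-m))\neq 0$ by the restriction sequence. Your write-up just makes explicit a few details (well-definedness of the endomorphism, nonvanishing of $h_0$, the identification of $\tH^0(E_{H_0}(-m))$ with the kernel) that the paper leaves implicit.
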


\begin{proof}
Consider a linear form $\lambda \in \tH^0(\sco_\piii(1))$ such that
$\lambda \vert_H = \ell$. The hypothesis is equivalent to the fact that$\, :$ 
\[
\lambda \tH^1(E(-m-1)) \subseteq h \tH^1(E(-m-1)) \text{ in } \tH^1(E(-m))\, . 
\]
Multiplication by $h \colon \tH^1(E(-m-1)) \ra \tH^1(E(-m))$ maps $\tH^1(E(-m-1))$
isomorphically onto $h\tH^1(E(-m-1))$ (because $\tH^0(E_H(-m)) = 0$). It follows that
multiplication by $\lambda \colon \tH^1(E(-m-1)) \ra h\tH^1(E(-m-1))$ can be
viewed as an automorphism of the vector space $\tH^1(E(-m-1))$. Let $c \in k$ be an
eigenvalue of this automorphism. Putting $h_0 := \lambda - ch$, it follows that
multiplication by $h_0 \colon \tH^1(E(-m-1)) \ra \tH^1(E(-m))$ is not injective. If
$H_0$ is the plane of equation $h_0 = 0$ then $\tH^0(E_{H_0}(-m)) \neq 0$. 
\end{proof}

\begin{lemma}\label{L:sj=1}
With the notation from Definition~\emph{\ref{D:spectrum}}, assume that $m \geq 2$
and consider an integer $i$ with $-m-1 \leq i \leq -2$. Assume that, for any general
plane $H \subset \piii$ such that $E_H$ is stable, there exists a linear form
$\ell \in {\fam0 H}^0(\sco_H(1))$ such that $\ell N_j = 0$ in
${\fam0 H}^1(E_H(j+1))$, $\forall \, j \leq i$. Then $s(j) = 1$ for
$-i-1 \leq j \leq m$. 
\end{lemma}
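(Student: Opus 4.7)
The plan is to combine restriction to a line, controlled by the hypothesis, with the instability produced by Lemma~\ref{L:unstable}, so as to force the splitting type of $E$ on a suitable line to be exactly $(m,-m)$. Fix a general plane $H$ with $E_H$ stable and let $\ell=\ell_H\in\tH^0(\sco_H(1))$, $L=\{\ell=0\}\subset H$ be supplied by the hypothesis. The exact sequence $0\to E_H(-1)\xra{\ell} E_H\to E_L\to 0$, together with the stability vanishing $\tH^0(E_H(j+1))=0$ for $j\leq -2$, converts $\ell N_j=0$ in $\tH^1(E_H(j+1))$ into an injection $N_j\hookrightarrow\tH^0(E_L(j+1))$ for every $j\leq i$. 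Writing $E_L\cong\sco_L(a)\oplus\sco_L(-a)$ with $a\geq 0$, this yields $n_j\leq a+j+2$ for $-a-1\leq j\leq i$.

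On the other hand, $n_{-m-1}\geq 1$ combined with the strict monotonicity of Theorem~\ref{T:ninh1f}(a) gives $n_j\geq j+m+2$ for $-m-1\leq j\leq i$. Comparing these estimates forces $a\geq m$; and whenever $a=m$ one has $n_j=j+m+2$ throughout that range, which, via $s(j)=n_{-j-1}-n_{-j-2}$ (with $n_{-m-2}=0$), rewrites exactly as $s(j)=1$ for $-i-1\leq j\leq m$. It thus suffices to exhibit one admissible $H$ for which $a_{L_H}=m$. Lemma~\ref{L:unstable}, applied to the degree $j=-m-1\leq i$ of the hypothesis, supplies a plane $H_0\subset\piii$ with $\tH^0(E_{H_0}(-m))\neq 0$; a non-zero section gives an exact sequence $0\to\sco_{H_0}(m)\to E_{H_0}\to\sci_Z(-m)\to 0$ with $Z\subset H_0$ zero-dimensional. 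For a general line $L_0\subset H_0$ disjoint from $Z$, the restricted sequence on $L_0$ splits (as $\tH^1(\sco_{L_0}(2m))=0$), so $E_{L_0}\cong\sco_{L_0}(m)\oplus\sco_{L_0}(-m)$. By Lemma~\ref{L:xi}, $\ell_H$ is unique up to scalar, so $H\mapsto L_H$ is a well-defined rational map from an open subset of $\piii^\vee$ to the Grassmannian of lines.

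The main obstacle is to match this rational map with the line $L_0$ produced from $H_0$. I expect this to be handled either by a \emph{dimension comparison} --- the image of $H\mapsto L_H$ is at most $3$-dimensional, whereas (for $m\geq 2$) Barth--Elencwajg type bounds on the jumping locus $\{L:a_L\geq m+1\}$ make it strictly smaller, so a generic admissible $H$ must yield $a_{L_H}=m$ --- or by a \emph{specialization argument} along a pencil of planes through $L_0$ degenerating to $H_0$, invoking the uniqueness of $\ell_H$ to identify its limit with the defining form of $L_0$ inside $H_0$. Either route collapses the two-sided estimate of the previous paragraph to equality, producing the required $s(j)=1$ for $-i-1\leq j\leq m$.
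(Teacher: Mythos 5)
Your reduction is sound: the injection $N_j\hookrightarrow \tH^0(E_L(j+1))$ for $j\leq i$, the lower bound $n_j\geq j+m+2$ from Theorem~\ref{T:ninh1f}(a), and the observation that equality (hence $s(j)=1$ on the stated range, using $n_{-m-2}=0$) follows once a single admissible plane $H$ has its distinguished line $L_H$ of splitting type exactly $(m,-m)$, all check out and are close in spirit to the paper's argument. But the crucial step --- actually producing such an $H$ --- is exactly where you stop: you offer two candidate strategies ("dimension comparison" on the jumping locus, or specialization along a pencil of planes through $L_0$) without carrying either out, and neither is reliable as sketched. The Barth--Elencwajg-type bound you invoke is not in the paper and is not justified; and the pencil of planes through a fixed line $L_0\subset H_0$ is a very special one-parameter family, so its general member need not be a "general plane" in the sense of the hypothesis (the hypothesis is only guaranteed on a dense subset of the dual space, which could a priori miss your pencil), so the uniqueness of $\ell_H$ cannot simply be "passed to the limit" there.

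The paper closes this gap with a short direct argument that requires no specialization: choose $H$ general so that it both satisfies the hypothesis and misses the zero-dimensional scheme $Z\subset H_0$ coming from the section of $E_{H_0}(-m)$, and set $L_0:=H\cap H_0$, $\ell_0:=h_0\vert_H$. Since $\tH^0(E_{H_0}(-m))\neq 0$ and $\tH^0(E(-m))=0$, multiplication by $h_0$ kills a non-zero element of $\tH^1(E(-m-1))=N_{-m-1}$, i.e.\ $\ell_0$ annihilates a non-zero element of $N_{-m-1}$; Lemma~\ref{L:xi} then forces $\ell_0$ to be proportional to the $\ell$ furnished by the hypothesis, so the distinguished line is automatically $L_H=H\cap H_0$. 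Because $H\cap Z=\emptyset$, restricting $0\to\sco_{H_0}(m)\to E_{H_0}\to\sci_{Z,H_0}(-m)\to 0$ to $L_0$ gives $E_{L_0}\simeq\sco_{L_0}(m)\oplus\sco_{L_0}(-m)$ on the nose, and your two-sided estimate (the paper instead uses the $R$-module structure of $N$ over the constants in $N_{-m-1}$ to get $N_j=\tH^0(\sco_{L_0}(m+j+1))$, a harmless variant) finishes the proof. So the idea you flag as "the main obstacle" is precisely the missing ingredient, and it is resolved not by a dimension count or a limit, but by applying Lemma~\ref{L:xi} directly to $h_0\vert_H$ for every admissible $H$.
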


\begin{proof}
By Lemma~\ref{L:unstable}, there exists a plane $H_0 \subset \piii$, of equation
$h_0 = 0$, such that $\tH^0(E_{H_0}(-m)) \neq 0$. Let $s$ be a non-zero global section
of $E_{H_0}$. Since $\tH^0(E_{H_0}(-m-1)) = 0$ (because $\tH^1(E(-m-2)) = 0$) it
follows that the zero scheme $Z$ of $s$ is a zero dimensional subscheme of $H_0$.
One deduces an exact sequence
$0 \ra \sco_{H_0}(m) \overset{s}{\ra} E_{H_0} \ra \sci_{Z , H_0}(-m) \ra 0$.

Since any general plane satisfies the hypothesis of the lemma, it follows that
there exists such a plane $H$ that does not intersect $Z$. Consider the line
$L_0 := H \cap H_0$ and let $\ell_0 := h_0 \vert_H \in \tH^0(\sco_H(1))$.

\vskip2mm

\noindent
{\bf Claim.}\quad $\ell_0 N_j = 0$ \emph{in} $\tH^1(E_H(j+1))$,
$\forall \, j \leq i$.

\vskip2mm

\noindent
\emph{Proof of the claim}. One knows, by hypothesis, that there exists a
non-zero linear form $\ell \in \tH^0(\sco_H(1))$ such that $\ell N_j = 0$ in
$\tH^1(E_H(j+1))$, $\forall \, j \leq i$. Now, $h_0$ annihilates a non-zero element
of $\tH^1(E(-m-1))$ hence $\ell_0$ annihilates a non-zero element of $N_{-m-1}$.
One deduces, from Lemma~\ref{L:xi}, that $\ell_0$ and $\ell$ are linearly
dependent. \hfill $\Diamond$

\vskip2mm

Now, using the exact sequence
$0 \ra E_H(-1) \overset{\ell_0}{\lra} E_H \ra E_{L_0} \ra 0$, one deduces, for
$j \leq -1$, an exact sequence$\, :$
\[
0 \lra \tH^0(E_{L_0}(j+1)) \overset{\partial}{\lra} \tH^1(E_H(j))
\overset{\ell_0}{\lra} \tH^1(E_H(j+1))\, . 
\]
Notice that, since $L_0$ does not intersect $Z$ in $H_0$, one has
$E_{L_0} \simeq \sco_{L_0}(m) \oplus \sco_{L_0}(-m)$. Since, by the Claim, $\ell_0$
annihilates $N_j$ for $j \leq i$, $N_j$ can be identified to a subspace of
$\tH^0(E_{L_0}(j+1)) = \tH^0(\sco_{L_0}(m + j + 1))$ for $j \leq i$. Since
$0 \neq N_{-m-1} \subseteq \tH^0(\sco_{L_0})$, one deduces that
$N_j = \tH^0(\sco_{L_0}(m + j + 1))$ for $j \leq i$. It follows that$\, :$
\[
s(-j-1) = n_j - n_{j-1} = 1 \text{ for } -m-1 \leq j \leq i 
\]
hence $s(j) = 1$ for $-i-1 \leq j \leq m$. 
\end{proof}

Before stating our results relating the shape of the minimal monad to the spectrum,
we introduce one more definition due to Barth \cite{ba} (see, also
Hartshorne and Rao \cite[\S 3]{har}).

\begin{definition}\label{D:rho}
With the notation from Definition~\ref{D:spectrum}, let $\rho(i)$ denote the
number of minimal generators of degree $i$ of the graded $S$-module
$\tH^1_\ast(E)$. In other words,
\[
\rho(i) := \dim_k(\tH^1(E(i))/S_1\tH^1(E(i-1)))\, . 
\]
Since $N \simeq \tH^1_\ast(E)/h\tH^1_\ast(E(-1))$ it follows that$\, :$
\begin{equation}\label{E:rhoi}
\rho(i) = \dim_k(N_i/R_1N_{i-1}) = \dim_k(\tH^1(E_H(i))/R_1N_{i-1}) -
\dim_k(\tH^1(E_H(i))/N_i)\, ,  
\end{equation}
where $R : = S/Sh$. Recalling the definition of the integer $m$, one has$\, :$ 

\vskip2mm

\noindent
{\bf Claim 1.}\quad (a) $\rho(i) = 0$ \emph{for} $i < -m-1$ \emph{and} 
$\rho(-m-1) = \h^1(E(-m-1)) = s(m)$.

(b) $\rho(i) = 0$ \emph{for} $i \geq m$.

\vskip2mm

\noindent
\emph{Proof of Claim} 1. (a) is obvious. As for (b), since
$\tH^2(E(m-2)) = 0$ and $\tH^3(E(m-3)) \simeq \tH^0(E(-m-1))^\vee = 0$, the
Castelnuovo-Mumford lemma (in its slightly more general variant stated in
\cite[Lemma~1.21]{acm}) implies that $\tH^1_\ast(E)$ is generated in degrees
$\leq m-1$ hence $\rho(i) = 0$ for $i \geq m$. \hfill $\Diamond$

\vskip2mm 

According to Lemma~\ref{L:semisplitmonad}, $E$ admits a (minimal) semi-splitting monad 
\[
0 \lra A \overset{\alpha}{\lra} B_+ \oplus B_0 \oplus B_+^\vee \overset{\beta}{\lra}
A^\vee \lra 0\, ,  
\]
with $A := \bigoplus_{i \in \z}\rho(i)\sco_\piii(i)$, $B_+ = \bigoplus_{i > 0}b_i\sco_\piii(i)$ and
$B_0 = b_0\sco_\piii$.  

\vskip2mm

\noindent
{\bf Claim 2.}\quad $b_i = 0$ \emph{for} $i > m$. 

\vskip2mm

\noindent
\emph{Proof of Claim} 2. Since $\tH^0(E) = 0$, the sequence
$0 \ra \tH^0(A(l)) \ra \tH^0(B_+(l)) \ra \tH^0(A^\vee(l))$ is exact for $l < 0$. Since
$\tH^0(A(-m)) = 0$ and $\tH^0(A^\vee(-m-2)) = 0$ (by Claim 1) 
it follows that $\tH^0(B_+(-m-2)) = 0$ hence
$b_i = 0$ for $i \geq m+2$. Moreover, the map $\tH^0(B_+(-m-1)) \ra \tH^0(A^\vee(-m-1))$ is
injective. On the other hand, this map is the zero map because the monad is minimal. One
deduces that $b_{m+1} = 0$. \hfill $\Diamond$

\vskip2mm 

Let us finally recall, from Definition~\ref{D:semisplitmonad}, that $E$ admits also 
a reduced semi-splitting monad$\, :$
\[
0 \lra A_- \overset{\overline \alpha}{\lra} 
{\overline B}_+ \oplus B_0 \oplus {\overline B}_+^\vee
\overset{\overline \beta}{\lra} A_-^\vee \lra 0\, ,
\]
where $A_- := \bigoplus_{i < 0}\rho(i)\sco_\piii(i)$ and ${\overline B}_+$ is defined
by an exact sequence$\, :$
\[
0 \lra A_{\geq 0} \lra B_+ \lra {\overline B}_+ \lra 0\, , \text{ where }
A_{\geq 0} := {\textstyle \bigoplus_{i \geq 0}}\rho(i)\sco_\piii(i)\, . 
\]
\end{definition}

Assertion (a) of the next result is due to Barth \cite{ba}, at least for $i < 0$,
(see, also, Hartshorne and Rao \cite[Prop.~3.1]{har}) while assertion (b) refines
a result of Hartshorne \cite[Prop.~5.1]{ha2} (which is, actually,
Corollary~\ref{C:unstablesj1}). Assertion (c) is, however, novel.

\begin{theorem}\label{T:rholeqs-1}
With the notation from Definition~\emph{\ref{D:spectrum}} and
Definition~\emph{\ref{D:rho}}, one has$\, :$

\emph{(a)} $\rho(i) \leq s(-i-1) - 1$, for $-m \leq i \leq m-1$.

\emph{(b)} If $\rho(i) = s(-i-1) - 1$, for some $i$ with $-m \leq i \leq -2$, then
$s(j) = 1$ for $-i \leq j \leq m$ and there is a plane $H_0 \subset \piii$ such
that ${\fam0 H}^0(E_{H_0}(-m)) \neq 0$.

\emph{(c)} $\rho(i) \leq {\fam0 max}(s(-i-1) - 2 , 0)$, for $i \geq 0$. 
\end{theorem}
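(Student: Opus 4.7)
My plan leverages Theorem~\ref{T:ninh1f} and Corollary~\ref{C:h1fraq}, applied to carefully chosen sub- and quotient modules of $\tH^1_\ast(E_H)$, together with Lemmas~\ref{L:unstable}--\ref{L:sj=1} and crucially the self-duality isomorphism $Q\simeq N^\vee(3)$, which is the decisive new ingredient for part (c). Fix a general plane $H\subset\piii$ with $E_H$ stable, and set $R=S/Sh$. Let $N^\prime\subseteq N$ denote the graded $R$-submodule generated by $N$ in degrees $\leq i-1$, so that $N^\prime_{i-1}=N_{i-1}$ and $N^\prime_i=R_1 N_{i-1}$, and let $Q^\prime:=\tH^1_\ast(E_H)/N^\prime$, so that $q^\prime_{i-1}=q_{i-1}$ and $q^\prime_i=\rho(i)+q_i$.

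For part (a) with $-m\leq i\leq -2$, Theorem~\ref{T:ninh1f}(a) applied to $N^\prime$ gives $\dim R_1 N_{i-1}>n_{i-1}$, whence $\rho(i)\leq n_i-n_{i-1}-1=s(-i-1)-1$; here the exception is avoided because $i\leq -2$, and $N_{i-1}\neq 0$ follows from the monotonicity $n_{-m-1}<\cdots<n_{-2}$ (the same theorem applied to $N$ itself). The case $i=-1$ is similar, the potential exception being ruled out by $s(0)\geq 1$ (which holds by the connectedness of the spectrum, valid since $m\geq 1$). For $0\leq i\leq m-1$, assuming $\rho(i)\geq 1$ so that $Q^\prime_i\neq 0$, Corollary~\ref{C:h1fraq}(a) at index $i-1\geq -1$ yields $q^\prime_{i-1}>q^\prime_i$, i.e.\ $\rho(i)\leq s(-i-1)-1$.

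For part (b), if $\rho(i)=s(-i-1)-1$ for some $-m\leq i\leq -2$, then $\dim N^\prime_i=\dim N^\prime_{i-1}+1$, so Theorem~\ref{T:ninh1f}(b) supplies a nonzero $\ell\in R_1$ (depending on $H$) such that $\ell N_j=0$ in $\tH^1(E_H(j+1))$ for every $j\leq i-1$. Since this argument applies to every general plane, Lemma~\ref{L:sj=1} with $i^\prime=i-1\in[-m-1,-3]$ (and $m\geq 2$ automatic from $i\leq -2$) yields $s(j)=1$ for $-i\leq j\leq m$; Lemma~\ref{L:unstable} applied with the degree $-m-1$ then provides the plane $H_0$ with $\tH^0(E_{H_0}(-m))\neq 0$.

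For part (c), I assume for contradiction that $i\geq 0$, $\rho(i)\geq 1$, and $\rho(i)=s(-i-1)-1$ (so $s(-i-1)\geq 2$). Then $q^\prime_{i-1}-q^\prime_i=1$, so Corollary~\ref{C:h1fraq}(b) furnishes $\ell\in R_1$ with $\ell Q^\prime_{k-1}=0$ in $Q^\prime_k$ for all $k\geq i-1$; passing to the quotient $Q$ of $Q^\prime$, the same holds for $Q$. The main obstacle is then to convert this condition into the hypothesis of Lemma~\ref{L:sj=1}, which I do through the self-duality $Q\simeq N^\vee(3)$: a direct verification shows that multiplication by $\ell$ from $Q_{k-1}$ to $Q_k$ vanishes if and only if multiplication by $\ell$ from $N_{-k-3}$ to $N_{-k-2}$ vanishes, so the hypothesis becomes $\ell N_l=0$ in $\tH^1(E_H(l+1))$ for every $l\leq -i-2$. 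For $i\geq 1$ (so $m\geq i+1\geq 2$), Lemma~\ref{L:sj=1} with $i^\prime=-i-2\in[-m-1,-2]$ gives $s(i+1)=1=s(-i-1)$, contradicting $s(-i-1)\geq 2$. For $i=0$, where $m$ may equal $1$ and Lemma~\ref{L:sj=1} is unavailable, the case $k=-1$ of the condition reads $\ell\,\tH^1(E_H(-2))\subseteq N_{-1}$; since multiplication by a general $\ell$ is injective on $\tH^1(E_H(-2))$ (by stability of $E_H$ together with Grauert--M\"{u}lich--Spindler for the line $\{\ell=0\}$), the image of dimension $c_2$ lies in $N_{-1}$, forcing $n_{-1}=c_2$ and $q_{-1}=0$, whence $s(-1)\leq 0$, again contradicting $s(-1)\geq 2$.
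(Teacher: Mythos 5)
Your parts (a), (b) and the case $i\geq 1$ of part (c) are correct and follow essentially the paper's own route: the only cosmetic differences are that you use the submodule $N'$ generated by $N$ in degrees $\leq i-1$ instead of the paper's module $N^{(i)}$ (which agrees with $N$ in all degrees $\neq i$), and that at $i=-1$ you exclude the exceptional case of Theorem~\ref{T:ninh1f}(a) via $s(0)\geq 1$ (i.e. $q_{-2}\geq 1$) rather than via $Q_{-1}\neq 0$; both are legitimate, since connectedness is established inside Definition~\ref{D:spectrum}. (For $0\leq i\leq m-1$ with $\rho(i)=0$ the inequality in (a) also needs $s(i+1)\geq 1$, again connectedness; this is worth a half-sentence.)

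The genuine gap is your treatment of $i=0$ in part (c). The linear form $\ell$ is not at your disposal: it is the specific form produced by Corollary~\ref{C:h1fraq}(b), and Grauert--M\"{u}lich--Spindler only controls the splitting of $E_H$ on a \emph{general} line of $H$, not on the particular line $L=\{\ell=0\}$. From $0\to E_H(-2)\xrightarrow{\ell}E_H(-1)\to E_L(-1)\to 0$ and $\tH^0(E_H(-1))=0$, the kernel of multiplication by $\ell$ on $\tH^1(E_H(-2))$ is the image of $\tH^0(E_L(-1))$, so your injectivity claim amounts to $L$ not being a jumping line of $E_H$ --- and in exactly the situation you are trying to contradict one must expect $L$ to be a jumping line (compare the proof of Lemma~\ref{L:sj=1}, where the distinguished line satisfies $E_{L_0}\simeq\sco_{L_0}(m)\oplus\sco_{L_0}(-m)$). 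If $a:=\h^0(E_L(-1))\geq 1$, your dimension count only yields $n_{-1}\geq c_2-a$, i.e. $q_{-1}\leq a$, hence $s(-1)\leq a$, which is no contradiction once $a\geq s(-1)$. Note moreover that for $i=0$ and $m\geq 2$ the argument you use for $i\geq 1$ (Lemma~\ref{L:sj=1} with the index $-2$) applies verbatim, so your detour is only needed when $m=1$; in that remaining case the correct repair is to check that the proof of Lemma~\ref{L:sj=1} (together with Lemmas~\ref{L:unstable} and \ref{L:xi}) goes through for $m=1$ and the index $-2$ --- which it does, and which the paper's own proof tacitly uses --- rather than to appeal to injectivity of multiplication by $\ell$ on $\tH^1(E_H(-2))$.
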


\begin{proof}
(a) Let $H \subset \piii$ be a plane of equation $h = 0$ such that $E_H$ is stable.  
Consider, for $-m \leq i \leq m-1$, the graded submodule $N^{(i)}$ of 
$\tH^1_\ast(E_H)$ defined by $N^{(i)}_j := N_j$ for $j \neq i$ and
$N^{(i)}_i := R_1N_{i-1}$, where $R := S/Sh$. Consider, also, the quotient module
$Q^{(i)} := \tH^1_\ast(E_H)/N^{(i)}$ and put $n^{(i)}_j := \dim_k N^{(i)}_j$,
$q^{(i)}_j := \dim_k Q^{(i)}_j$.

Assume, firstly, that $-m \leq i \leq -1$. One has $N^{(i)}_{-m-1} \neq 0$ because
$N^{(i)}_{-m-1} = N_{-m-1}$ and $N^{(i)}_{-1} \neq \tH^1(E_H(-1))$ because
$N^{(i)}_{-1} \subseteq N_{-1} \neq \tH^1(E_H(-1))$ (see Definition~\ref{D:spectrum}).
Theorem~\ref{T:ninh1f}(a) implies, now, that $n^{(i)}_{-m-1} < n^{(i)}_{-m} < \cdots < n^{(i)}_{-1}$.
In particular, $n_{i-1} = n^{(i)}_{i-1} < n^{(i)}_i$ hence$\, :$
\[
\rho(i) = n_i - n^{(i)}_i < n_i - n_{i-1} = s(-i-1)\, . 
\]

Assume, now, that $0 \leq i \leq m-1$. One has $Q^{(i)}_{m-2} \neq 0$ because
$Q_{m-2}$ is a quotient of $Q^{(i)}_{m-2}$. Corollary~\ref{C:h1fraq}(a) implies that
$q^{(i)}_{-1} > q^{(i)}_0 > \cdots > q^{(i)}_{m-2}$. Moreover, one also has
$q^{(i)}_{m-2} > q^{(i)}_{m-1}$ because either $Q^{(i)}_{m-1} \neq 0$ and one applies
Corollary~\ref{C:h1fraq}(a) or $Q^{(i)}_{m-1} = 0$ and one uses the fact that
$Q^{(i)}_{m-2} \neq 0$. In particular, $q_{i-1} = q^{(i)}_{i-1} > q^{(i)}_i$ hence$\, :$
\[
\rho(i) = q^{(i)}_i - q_i < q_{i-1} - q_i = s(-i-1)\, . 
\]

(b) The proof of (a) shows that $n^{(i)}_i - n^{(i)}_{i-1} = 1$.
Theorem~\ref{T:ninh1f}(b) implies that there exists a non-zero linear form
$\ell \in \tH^0(\sco_H(1))$ such that $\ell N^{(i)}_j = 0$ in $\tH^1(E_H(j+1))$,
$\forall \, j \leq i$. In particular, $\ell N_j = 0$, $\forall \, j \leq i-1$.
Since $H$ is any plane for which $E_H$ is stable, Lemma~\ref{L:sj=1} implies that
$s(j) = 1$ for $-i \leq j \leq m$. Moreover, Lemma~\ref{L:unstable} implies that
there exists a plane $H_0 \subset \piii$ such that $\tH^0(E_{H_0}(-m)) \neq 0$.

(c) Assume that $\rho(i) = s(-i-1) - 1$ for some $i$ with $0 \leq i \leq m-1$.
We want to show that, in this case, $\rho(i) = 0$. The proof of (a) shows that
$q^{(i)}_{i-1} - q^{(i)}_i = 1$.

We assume, firstly, that $0 \leq i \leq m-2$ or $i = m-1$ and
$Q^{(m-1)}_{m-1} \neq 0$. In this case $Q^{(i)}_i \neq 0$ and
Corollary~\ref{C:h1fraq}(b) implies that there exists a non-zero linear form
$\ell \in \tH^0(\sco_H(1))$ such that multiplication by
$\ell \colon Q^{(i)}_{j-1} \ra Q^{(i)}_j$ is the zero map, $\forall \, j \geq i-1$.
Since $Q$ is a quotient of $Q^{(i)}$, the same is true for the multiplication maps
$\ell \colon Q_{j-1} \ra Q_j$, $j \geq i-1$.

Using the fact that $Q \simeq N^\vee(3)$, one deduces that the multiplication by
$\ell \colon N_l \ra N_{l+1}$ is the zero map, $\forall \, l \leq -i-2$. Since $H$ is
any plane for which $E_H$ is stable, Lemma~\ref{L:sj=1} implies that $s(j) = 1$, 
for $i+1 \leq j \leq m$. In particular, $s(i+1) = 1$ hence $s(-i-1) = 1$ hence
$\rho(i) = 0$.

Assume, finally, that $i = m-1$ and $Q^{(m-1)}_{m-1} = 0$. In this case
$\rho(m-1) = q^{(m-1)}_{m-1} - q_{m-1} = 0$. 
\end{proof}

\begin{corollary}\label{C:unstablesj1}
Besides the properties emphasized in Definition~\emph{\ref{D:spectrum}}, the
spectrum $s$ of a stable rank $2$ vector bundle $E$ on $\piii$ with $c_1 = 0$
also satisfies the following condition$\, :$
\begin{enumerate}
\item[(v)] If $s(i) = 1$ for some $i$ with $1 \leq i \leq m$ then $s(j) = 1$ for
$i \leq j \leq m$. 
\end{enumerate}
If, moreover, $1 \leq i \leq m-1$ then there is a plane $H_0 \subset \piii$ such that
${\fam0 H}^0(E_{H_0}(-m)) \neq 0$. 
\end{corollary}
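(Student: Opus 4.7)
The plan is to deduce the corollary directly from Theorem~\ref{T:rholeqs-1} by means of the symmetry $i \mapsto -i-1$ of the spectrum. The key observation is that the hypothesis $s(i)=1$ forces, via the inequality in part~(a) of that theorem, the equality $\rho(-i-1) = s(i) - 1 = 0$, which is exactly the extremal situation that triggers part~(b).

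I would first dispose of the trivial case $i=m$: here the asserted range $i \leq j \leq m$ reduces to the single point $j=m$, so there is nothing to prove, and the additional conclusion about the plane $H_0$ is not claimed in this case. For the remaining range $1 \leq i \leq m-1$, I would set $i' := -i-1$, so that $-m \leq i' \leq -2$ and $s(-i'-1) = s(i) = 1$. Theorem~\ref{T:rholeqs-1}(a) applied to $i'$ gives $\rho(i') \leq s(-i'-1) - 1 = 0$, hence the equality $\rho(i') = s(-i'-1) - 1$ holds. Theorem~\ref{T:rholeqs-1}(b), applied to this same $i'$, then simultaneously yields $s(j) = 1$ for $-i' = i+1 \leq j \leq m$ and the existence of a plane $H_0 \subset \piii$ with $\tH^0(E_{H_0}(-m)) \neq 0$. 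Combining with the hypothesis $s(i) = 1$ gives $s(j) = 1$ on the full range $i \leq j \leq m$.

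The only point to verify is that the indices fall within the ranges prescribed by (a) and (b) of Theorem~\ref{T:rholeqs-1}; for $1 \leq i \leq m-1$ the translated index $i' = -i-1$ lies in $[-m, -2]$, which is precisely the hypothesis of (b). Consequently the corollary is, given Theorem~\ref{T:rholeqs-1}, essentially formal and presents no real obstacle---it is a repackaging of the theorem's content under the reflection $i \mapsto -i-1$.
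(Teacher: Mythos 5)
Your proof is correct and follows essentially the same route as the paper's: the substitution $l=-i-1$ (your $i'$), the observation that $\rho(l)\geq 0$ together with Theorem~\ref{T:rholeqs-1}(a) forces $\rho(l)=s(-l-1)-1=0$, and then an application of Theorem~\ref{T:rholeqs-1}(b). The only cosmetic difference is that you spell out the trivial case $i=m$ and the recombination of $s(i)=1$ with the range $i+1\leq j\leq m$, which the paper leaves implicit.
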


\begin{proof}
We can assume, of course, that $1 \leq i \leq m-1$. Put $l := -i-1$, such that
$-l-1 = i$. One has $-m \leq l \leq -2$. Since $\rho(l) \leq s(-l-1) - 1$ and
since $s(-l-1) = 1$ one gets that $\rho(l) = s(-l-1) - 1$. The assertions from the
corollary follow, now, from Theorem~\ref{T:rholeqs-1}(b). 
\end{proof}  

\begin{remark}\label{R:Q2}
According to Definition~\ref{D:spectrum} and Corollary~\ref{C:unstablesj1} (which
is, actually, Hartshorne's result \cite[Prop.~5.1]{ha2}) the spectrum of a stable
rank 2 vector bundle $E$ on $\piii$ with $c_1 = 0$ is a function
$s \colon \z \ra \n$, with finite support, satisfying the following
conditions$\, :$
\begin{enumerate}
\item[(S1)] (Symmetry)\quad $s(-i) = s(i)$, $\forall \, i \in \z\, ;$
\item[(S2)] (Connectednes)\quad $s(0) \geq 1$ and if $s(i) \geq 1$, for some
$i \geq 1$, then $s(j) \geq 1$, for $0 \leq j \leq i\, ;$
\item[(S3)] If $s(i) = 1$ for some $i \geq 1$ then $s(j) \leq 1$,
$\forall \, j \geq i$. 
\end{enumerate}
Hartshorne and Rao ask, in question (Q2) at the end of their paper \cite{har}, 
if any function with the above properties is the spectrum of some stable rank 2
vector bundle $E$ on $\piii$ with $c_1 = 0$. 
\end{remark}

\begin{proposition}\label{P:bi}
Let $E$ be a stable rank $2$ vector bundle on $\piii$ with $c_1 = 0$. Consider, as
in the final part of Definition~\emph{\ref{D:rho}}, the minimal semi-splitting monad and
the reduced semi-splitting monad of $E$. Then, with the notation from that definition,
one has$\, :$

\emph{(a)} $b_i = 2s(i) - (s(i-1) - \rho(-i)) - (s(i+1) - \rho(i))$, for $i \geq 1$.

\emph{(b)} $b_0/2 = s(0) - (s(1) - \rho(0)) + 1$.

\emph{(c)} $c_1({\overline B}_+^\vee)-c_1(A_-) = c_1(A_-^\vee)-c_1({\overline B}_+) = s(0)$. 
\end{proposition}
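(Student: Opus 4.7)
The plan is to extract all three relations from a single cohomological identity derived from the monad, together with the rank and Chern-class constraints on $E$, using the second-difference operator $\Delta^2$ to isolate the contributions of individual line bundles. Setting $K := \ker\beta$ and splitting the monad into the two short exact sequences $0 \to K \to B \to A^\vee \to 0$ and $0 \to A \to K \to E \to 0$, the vanishings $\tH^1_\ast(A) = \tH^2_\ast(A) = \tH^1_\ast(B) = 0$ (which hold because $A$ and $B$ are direct sums of line bundles on $\piii$) give, via the two associated long exact sequences,
\[
\h^1(E(l)) \;=\; \h^0(A^\vee(l)) + \h^0(A(l)) - \h^0(B(l)) + \h^0(E(l)), \qquad \forall\, l \in \z.
\]
For $l \leq -1$, stability forces $\h^0(E(l)) = 0$, and $(\Delta^2\h^1_E)(l) = s(-l-1)$ by the definition of the spectrum.

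For (a), I would use the elementary identity $\Delta^2[\h^0(\sco_\piii(j + \cdot))](l) = (j+l+1)_+$, and hence $\Delta^4[\h^0(\sco_\piii(j + \cdot))](l) = \delta_{l,-j}$. Applying $\Delta^2$ once more to the spectral identity at $l = -i$ with $i \geq 1$ (so that the five values $l, l-1, \ldots, l-4$ are all $\leq -1$, which forces $\Delta^4\h^0(E)(-i) = 0$) one obtains
\[
s(i-1) - 2s(i) + s(i+1) \;=\; \rho(-i) + \rho(i) - b_i,
\]
which rearranges into (a).

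For (b), iterating at $l = 0$ fails because $\h^1(E(0))$ is not determined by the spectrum, so I would appeal to the rank identity instead. The relation $[E] = [B] - [A] - [A^\vee]$ in the Grothendieck group, combined with $\rk B = 2\rk B_+ + b_0$ and $\rk E = 2$, gives $b_0/2 = 1 + \rk A - \rk B_+$. Summing the formula of (a) over $i \geq 1$, the $s$-contribution $\sum_{i \geq 1}[2s(i) - s(i-1) - s(i+1)]$ telescopes to $s(1) - s(0)$ (since $s$ has finite support) and the $\rho$-contribution sums to $\rk A - \rho(0)$, whence $\rk B_+ = s(1) - s(0) + \rk A - \rho(0)$. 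Substitution yields (b).

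For (c), the defining exact sequence $0 \to A_{\geq 0} \to B_+ \to \overline B_+ \to 0$ gives $c_1(\overline B_+) = c_1(B_+) - c_1(A_{\geq 0})$, so that
\[
c_1(\overline B_+^\vee) - c_1(A_-) \;=\; -\!\sum_{i\geq 1} i\,b_i + \sum_{j\geq 1} j\,\rho(j) + \sum_{k\geq 1} k\,\rho(-k).
\]
Substituting the formula of (a) for $b_i$, the $i$-weighted $\rho$-terms cancel exactly against the boundary $\rho$-sums, while $\sum_{i\geq 1} i[2s(i) - s(i-1) - s(i+1)]$ telescopes to $-s(0)$; this gives $c_1(\overline B_+^\vee) - c_1(A_-) = s(0)$, and the equality with $c_1(A_-^\vee) - c_1(\overline B_+)$ is automatic since both expressions equal $-c_1(A_-) - c_1(\overline B_+)$. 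The main obstacle throughout is the careful bookkeeping of indices in these telescoping sums; beyond the cohomological identity, the remainder is organized algebra.
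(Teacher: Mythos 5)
Your proposal is correct and follows essentially the paper's own route: the same cohomological identity read off the monad display (with stability killing $\h^0(E(l))$ for $l \le 0$), second differencing against the spectrum to isolate $b_i$ for (a), and the rank identity $2\,\text{rk}\, B_+ + b_0 = 2\,\text{rk}\, A + 2$ combined with the sum of (a) over $i \ge 1$ for (b). The only cosmetic differences are that you implement the differencing with $\Delta^4$ directly on $\piii$ where the paper restricts to a plane and then a line, and that you obtain (c) by substituting (a) into the weighted $c_1$-sums and telescoping, whereas the paper reads $s(0) = c_1(A_-^\vee) - c_1(B_+) + c_1(A_{\geq 0})$ straight off the $i = 0$ relation restricted to a line.
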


\begin{proof}
(a) Consider a flag $L \subset H \subset \piii$, where $L$ is a line and $H$ a
plane. Since $\tH^0(E(-1)) = 0$, it follows that$\, :$
\[
\h^1(E(-l)) = \h^0(A^\vee(-l)) - \h^0(B_+(-l)) + \h^0(A(-l))\, ,\ \forall \,
l \geq 1\, . 
\]
One deduces that, for $i \geq 1\, :$
\[
n_{-i} = \h^1(E(-i)) - \h^1(E(-i-1)) = \h^0(A^\vee_H(-i)) - \h^0((B_+\vert_H)(-i)) +
\h^0(A_H(-i))\, , 
\]
hence, for $i \geq 0\, :$
\[
s(i) = n_{-i-1} - n_{-i-2} = \h^0(A^\vee_L(-i-1)) - \h^0((B_+\vert_L)(-i-1)) +
\h^0(A_L(-i-1))
\]
hence, still for $i \geq 0$,
\[
s(i) - s(i+1) = {\textstyle \sum_{j \geq i+1}}\rho(-j) -
{\textstyle \sum_{j \geq i+1}}b_j + {\textstyle \sum_{j \geq i+1}}\rho(j)\, . 
\] 
Finally, for $i \geq 1\, :$
\[
(s(i-1) - s(i)) - (s(i) - s(i+1)) = \rho(-i) - b_i + \rho(i)\, . 
\]

(b) Taking $\sum_{i \geq 1}$ of the last relations in the proof of (a), one gets$\, :$
\[
s(0) - s(1) = {\textstyle \sum_{i \geq 1}}(\rho(-i) - b_i + \rho(i))\, . 
\]
hence $\sum_{i \geq 1}b_i = \sum_{i \neq 0}\rho(i) -s(0) +s(1)$. 
On the other hand, $2\text{rk}\, B_+ + \text{rk}\, B_0 = 2\text{rk}\, A + 2$ hence$\, :$
\[
{\textstyle \sum_{i \geq 1}}b_i + b_0/2 = {\textstyle \sum_{i \in \z}}\rho(i) + 1\, .
\]
Substracting the former relation from the latter one, one gets the relation from the
statement.

(c) One of the relations established in the proof of (a) says that$\, :$
\begin{gather*}
s(0) = \h^0(A^\vee_L(-1)) - \h^0((B_+\vert_L)(-1)) + \h^0(A_L(-1)) =\\
\h^0((A_-^\vee \vert_L)(-1)) - \h^0((B_+\vert_L)(-1)) + \h^0((A_{\geq 0} \vert_L)(-1))\, .  
\end{gather*}
One uses, now, the fact that if $G$ is a vector bundle on $\pj$ that is a direct
sum of line bundles of non-negative degrees then $\h^0(G(-1)) = c_1(G)$ and the
fact that if $C$ is a vector bundle on $\piii$ then $c_1(C) = c_1(C_L)$. 
\end{proof}  

\begin{corollary}\label{C:si=2}
With the notation from Definitions~\emph{\ref{D:spectrum}} and
\emph{\ref{D:rho}}, one has$\, :$

\emph{(a)} If $s(0) = 1$ and $s(1) \geq 2$ then $\rho(-1) = 0$,
$\rho(0) = s(1) - 2$ and $b_0 = 0$. Moreover,
$b_1 = 2s(1) - 1 - (s(2) - \rho(1))$.

\emph{(b)} If $s(0) = 2$, $s(1) = 2$ and $s(2) \geq 2$ then $\rho(-2) = 0$, $\rho(-1) \leq 1$,
$\rho(0) = 0$ and $b_0 = 2$. Moreover, $b_1 = \rho(-1) + 2 - (s(2) - \rho(1)) \leq \rho(-1)$. 

\emph{(c)} If $s(i) = 2$ and $s(i+1) \geq 2$, for some $i$ with
$2 \leq i \leq m-1$, then $\rho(i-1) = 0$, $\rho(i) = s(i+1) - 2$,
$\rho(-i) = s(i-1) - 2$ and $b_i = 0$. 
\end{corollary}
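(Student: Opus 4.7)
The corollary reduces to pure bookkeeping: each assertion follows by pitting the upper bounds on $\rho$ supplied by Theorem~\ref{T:rholeqs-1} against the exact formulas for the $b_i$ in Proposition~\ref{P:bi}, using the trivial but crucial fact that $b_i \geq 0$ for all $i$. In practice, the formulas of Proposition~\ref{P:bi} rearrange to lower bounds on $\rho(0)$, or on the sum $\rho(-i) + \rho(i)$, which must match the Theorem~\ref{T:rholeqs-1} ceilings; equality then forces both the announced values of $\rho$ and the vanishing of the relevant $b_i$.

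For part (a), Theorem~\ref{T:rholeqs-1}(a) at $i = -1$ gives $\rho(-1) \leq s(0) - 1 = 0$, and Theorem~\ref{T:rholeqs-1}(c) at $i = 0$ gives $\rho(0) \leq s(1) - 2$. Proposition~\ref{P:bi}(b) rearranges to $b_0/2 = \rho(0) - (s(1) - 2)$, so the nonnegativity of $b_0$ forces $\rho(0) = s(1) - 2$ and $b_0 = 0$. The formula for $b_1$ is then a direct substitution into Proposition~\ref{P:bi}(a) with $s(0) = 1$ and $\rho(-1) = 0$.

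For part (b), the bound $\rho(-2) \leq s(1) - 1 = 1$ from Theorem~\ref{T:rholeqs-1}(a) is sharpened by the equality case Theorem~\ref{T:rholeqs-1}(b): were $\rho(-2) = 1$, then $s(j) = 1$ for $2 \leq j \leq m$, contradicting $s(2) \geq 2$; hence $\rho(-2) = 0$. The bound $\rho(-1) \leq 1$ is Theorem~\ref{T:rholeqs-1}(a), while $\rho(0) \leq \max(s(1) - 2, 0) = 0$ is Theorem~\ref{T:rholeqs-1}(c). Proposition~\ref{P:bi}(b) then yields $b_0 = 2$, and Proposition~\ref{P:bi}(a) at $i = 1$ gives the claimed formula for $b_1$. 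Finally, $b_1 \leq \rho(-1)$ reduces to $\rho(1) \leq s(2) - 2$, which is Theorem~\ref{T:rholeqs-1}(c) once more.

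The main obstacle is part (c), where the trick of (b) must be deployed more carefully. Theorem~\ref{T:rholeqs-1}(a) only yields $\rho(-i) \leq s(i-1) - 1$, which is one too weak; but the equality case of Theorem~\ref{T:rholeqs-1}(b), applied at $-i$ with $2 \leq i \leq m-1$, would force $s(i) = 1$, against the hypothesis $s(i) = 2$. This strengthens the bound to $\rho(-i) \leq s(i-1) - 2$. Combined with $\rho(i) \leq s(i+1) - 2$ from Theorem~\ref{T:rholeqs-1}(c), substitution into Proposition~\ref{P:bi}(a) at $i$ gives
\[
b_i \;=\; 2s(i) - (s(i-1) - \rho(-i)) - (s(i+1) - \rho(i)) \;\leq\; 4 - 2 - 2 = 0.
\]
Nonnegativity of $b_i$ therefore forces $b_i = 0$ together with equality in both $\rho$-bounds, so $\rho(-i) = s(i-1) - 2$ and $\rho(i) = s(i+1) - 2$. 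The vanishing $\rho(i-1) = 0$ is independent of this: it follows directly from Theorem~\ref{T:rholeqs-1}(c) at $i-1$, since $\max(s(i) - 2, 0) = 0$.
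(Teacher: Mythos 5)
Your proof is correct and follows essentially the same route as the paper: in each part it combines the upper bounds on $\rho$ from Theorem~\ref{T:rholeqs-1}(a)--(c) (using part (b) to sharpen the bound by one where needed) with the formulas of Proposition~\ref{P:bi} and the nonnegativity of the $b_i$, forcing the stated equalities. The only difference is that you spell out the ``$b_i \geq 0$ forces equality'' mechanism explicitly, which the paper leaves implicit.
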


\begin{proof}
(a) Theorem~\ref{T:rholeqs-1}(a) implies that $\rho(-1) = 0$. Since
$s(1) = s(-1)$, Theorem~\ref{T:rholeqs-1}(c) implies that $\rho(0) \leq s(1) - 2$.
Now, the formula in Proposition~\ref{P:bi}(b) implies that $b_0 = 0$ and that
$\rho(0) = s(1) - 2$. The formula for $b_1$ follows, now, from
Proposition~\ref{P:bi}(a).

(b) Since $s(2) \geq 2$, Theorem~\ref{T:rholeqs-1}(a),(b) implies that $\rho(-2) = 0$. It also
follows, from Theorem~\ref{T:rholeqs-1}(a) (resp., Theorem~\ref{T:rholeqs-1}(c)) that
$\rho(-1) \leq 1$ (resp., $\rho(0) = 0$). Proposition~\ref{P:bi}(b) implies, now, that $b_0 = 2$
and that $b_1 = \rho(-1) + 2 - (s(2) - \rho(1))$. One deduces, from Theorem~\ref{T:rholeqs-1}(c),
that $s(2) - \rho(1) \geq 2$ hence $b_1 \leq \rho(-1)$. 

(c) Since $s(-i) = s(i) = 2$, Theorem~\ref{T:rholeqs-1}(c) implies that
$\rho(i-1) = 0$ and Theorem~\ref{T:rholeqs-1}(b) implies that
$\rho(-i) \leq s(i-1) - 2$. Since $s(i+1) = s(-i-1)$, Theorem~\ref{T:rholeqs-1}(c)
also implies that $\rho(i) \leq s(i+1) - 2$. The formula for $b_i$ in
Proposition~\ref{P:bi}(a) shows, now, that $b_i = 0$, $\rho(-i) = s(i-1) - 2$ and
$\rho(i) = s(i+1) - 2$. 
\end{proof}

\section{Spectra and double structures on space curves, I}\label{S:doublei}

We draw, in this section and in the next one, some consequences of the general results from the
previous sections. Using the notation from Definition~\ref{D:spectrum} and Definition~\ref{D:rho},
we show that if $E$ is a stable rank 2 vector bundle on $\piii$ with $s(0) \leq 2$ and 
$s(1) = s(2) = 2$ then, essentially, $E(s(0))$ has a global section whose zero scheme is
a double structure on a (locally Cohen-Macaulay) space curve $X$ with
$\tH^0(\sci_X(2)) \neq 0$. We use this fact and the description of the space curves
contained in a double plane given by Hartshorne and Schlesinger \cite{has} to show that
the question (Q2) concerning the spectrum of a stable rank 2 bundle on $\piii$ formulated
at the end of the paper of Hartshorne and Rao \cite{har} has a negative answer. We actually
emphasize a further condition that the spectrum must satisfy but, unfortunately, that condition
is far from being sufficient to characterize the sequences of nonnegative
integers that can be the spectrum of a stable rank 2 vector bundle on $\piii$. 

We gather, in the next remark, some easy observations that will be needed in the sequel.

\begin{remark}\label{R:c1scf}
(a) If $\scf$ is a torsion sheaf on $\p^n$ then$\, :$
\[
c_1(\scf) = {\textstyle \sum}_X(\text{length}\, \scf_\xi)\text{deg}\, X\, , 
\]
where the sum is indexed by the 1-codimensional irreducible components of the support of
$\scf$, $\xi$ denotes the generic point of $X$ and $\text{length}\, \scf_\xi$ is the
length of the Artinian $\sco_{\p^n , \xi}$-module $\scf_\xi$.

\emph{Indeed}, let $\Sigma$ be the support of $\scf$, endowed with the structure of
reduced closed subscheme of $\p^n$. The successive quotients of the filtration
$\scf \supseteq \sci_\Sigma\scf \supseteq \sci_\Sigma^2\scf \supseteq \cdots$ are
$\sco_\Sigma$-modules. Consequently, we can assume that $\scf$ is an $\sco_\Sigma$-module.
If $X$ is an irreducible component of $\Sigma$, there exists a non-empty open subset
$U$ of $X$, intersecting none of the other components of $\Sigma$, such that
$\scf \vert_U$ is a locally free $\sco_U$-module. Then $\text{length}\, \scf_\xi$
equals the rank of $\scf \vert_U$. One can restrict, now, $\scf$ to a general line
$L \subset \p^n$.

\vskip2mm

(b) Let $a_1 \leq a_2 \leq \cdots \leq a_m$ be a sequence of integers and let $\scf$ be
a coherent quotient of $\bigoplus_{i = 1}^m\sco_{\p^n}(a_i)$. If $\scf$ has rank
$r \geq 1$ then $c_1(\scf/\scf_{\text{tors}}) \geq a_1 + \cdots + a_r$.

\emph{Indeed}, choose a (closed) point $x \in \p^n$ such that $\scf/\scf_{\text{tors}}$
is locally free in a neighbourhood of $x$. The reduced stalk $\e(x)$ of the
epimorphism $\e \colon \bigoplus_{i=1}^m\sco_{\p^n}(a_i) \ra \scf/\scf_{\text{tors}}$
is surjective. There exist indices $1 \leq i_1 < \cdots < i_r \leq m$ such that
$\e(x)$ induces an isomorphism
$\bigoplus_{j=1}^r\sco_{\p^n}(a_{i_j})(x) \izo (\scf/\scf_{\text{tors}})(x)$. The restriction of
$\e$ to $\bigoplus_{j = 1}^r\sco_{\p^n}(a_{i_j})$ is an isomorphism in a neighbourhood
of $x$ hence this restriction is a monomorphism (globally) and its cokernel is a torsion sheaf. 
It follows that
$c_1(\scf/\scf_{\text{tors}}) \geq c_1(\bigoplus_{j = 1}^r\sco_{\p^n}(a_{i_j}))$ (one takes into account
assertion (a) above). 

\vskip2mm

(c) Let $\phi \colon F \ra G$ be a morphism of locally free sheaves on $\p^n$. Then$\, :$
\begin{gather*} 
c_1(\Ker \phi) - c_1(\Ker \phi^\vee) = c_1((\Cok \phi)_{\text{tors}}) +  
2c_1((\Cok \phi)/(\Cok \phi)_{\text{tors}}) +\\ c_1(F) - c_1(G)\, . 
\end{gather*}

\emph{Indeed}, $c_1(\Ker \phi) = c_1(F) - c_1(G) + c_1(\Cok \phi)$. On the other hand,
$\Ker \phi^\vee \simeq (\Cok \phi)^\vee$. If $\scf$ is a coherent sheaf on $\p^n$ then,
dualizing the exact sequence
$0 \ra \scf_{\text{tors}} \ra \scf \ra \scf/\scf_{\text{tors}} \ra 0$, one gets that
$(\scf/\scf_{\text{tors}})^\vee \izo \scf^\vee$. One deduces that
$c_1(\Ker \phi^\vee) = -c_1(\Cok \phi /(\Cok \phi)_{\text{tors}})$. 
\end{remark}

\begin{proposition}\label{P:s(0)=1}
Let $E$ be a stable rank $2$ vector bundle on $\piii$ with $c_1 = 0$. Consider the
spectrum of $E$ \emph{(}see Definition~\emph{\ref{D:spectrum}}\emph{)} and its minimal
\emph{(}resp., reduced\emph{)} semi-splitting monad
\emph{(}see Definition~\emph{\ref{D:rho}}\emph{)}.
Assume that $s(0) = 1$ and $s(1) \geq 2$ hence that, by Corollary~\emph{\ref{C:si=2}(a)},
$b_0 = 0$ and $\rho(-1) = 0$.

Then the degeneracy scheme of
${\overline \beta}_+ \colon {\overline B}_+ \ra A_-^\vee$ is a locally Cohen-Macaulay
curve $X$ with ${\fam0 H}^1(\sci_X) = 0$ and $E(1)$ has a global section whose zero scheme is a
double structure $Y$ on $X$ such that the ideal sheaf $\sci_Y$ is the kernel of an epimorphism
$\sci_X \ra \omega_X(2)$. 
\end{proposition}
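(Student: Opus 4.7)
The plan is to verify the codimension-$2$ hypothesis of Lemma~\ref{L:splittingdouble} for the map ${\overline \beta}_+ \colon {\overline B}_+ \ra A_-^\vee$, and then invoke that lemma with $B_0^\prime = 0$ and $c = 1$. By Corollary~\ref{C:si=2}(a), the hypotheses $s(0) = 1$ and $s(1) \geq 2$ already yield $b_0 = 0$ and $\rho(-1) = 0$; in particular $B_0 = 0$, every summand of $A_-$ has degree $\leq -2$, and every summand of ${\overline B}_+$ has degree $\geq 1$. The reduced semi-splitting monad accordingly simplifies to
\[
0 \lra A_- \overset{\overline \alpha}{\lra} {\overline B}_+ \oplus {\overline B}_+^\vee
\overset{\overline \beta}{\lra} A_-^\vee \lra 0\, ,
\]
and Proposition~\ref{P:bi}(c) supplies the normalization
$c := c_1({\overline B}_+^\vee) - c_1(A_-) = s(0) = 1$. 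A direct rank count gives
$\rk {\overline B}_+ = \rk A_- + 1$, so codimension $2$ is indeed the expected codimension of the degeneracy locus.

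Granted that this locus really is a curve, Lemma~\ref{L:splittingdouble} delivers the full conclusion of the proposition: a global section of $E(c) = E(1)$ whose zero scheme is a double structure $Y$ on the locally Cohen-Macaulay curve $X$, with $\sci_Y$ the kernel of an epimorphism $\sci_X \ra \omega_X(4-2c) = \omega_X(2)$. The vanishing $\tH^1(\sci_X) = 0$ then follows from Corollary~\ref{C:splittingdouble} applied with $l = 0 \leq c-1$. Thus the whole proposition reduces to showing that the degeneracy locus of ${\overline \beta}_+$ really has codimension exactly $2$.

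This codimension estimate is the main obstacle. I would argue by contradiction, splitting according to whether ${\overline \beta}_+$ is generically surjective. The core tools in each case are the self-duality ${\overline \beta}_+ = -{\overline \alpha}_-^\vee$, the global surjectivity of $\overline\beta$ onto $A_-^\vee$ together with $B_0 = 0$, the $c_1$-bookkeeping of Remark~\ref{R:c1scf}, and the stability bounds $\tH^0(E) = \tH^0(E(-1)) = 0$. If ${\overline \beta}_+$ is not generically surjective, then (since $B_0 = 0$) the surjectivity of $\overline\beta$ forces the composition ${\overline B}_+^\vee \xra{{\overline \beta}_-} A_-^\vee \twoheadrightarrow \Cok {\overline \beta}_+$ to be surjective, exhibiting a common torsion-free rank-$\geq 1$ quotient of $A_-^\vee$ (minimum generator degree $\geq 2$) and of ${\overline B}_+^\vee$ (maximum generator degree $\leq -1$); the resulting numerical constraints from Remark~\ref{R:c1scf}(b), compared against $c = 1$, yield a contradiction. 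If ${\overline \beta}_+$ is generically surjective but degenerates on a divisor $D$, the exact sequence $0 \ra \Ker {\overline \beta}_+ \ra {\overline B}_+ \ra A_-^\vee \ra \Cok {\overline \beta}_+ \ra 0$ combined with Remark~\ref{R:c1scf}(a) forces $c_1(\Cok {\overline \beta}_+) \geq \deg D > 0$ and hence $c_1(\Ker {\overline \beta}_+) \geq 2$; this is to be contradicted by leveraging ${\overline \beta}_+ = -{\overline \alpha}_-^\vee$, the monad relation $\overline\beta \circ \overline\alpha = 0$, and the stability of $E$ to produce a sharp upper bound on $c_1(\Ker {\overline \beta}_+)$.

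The hard part is precisely the second case: translating stability of $E$, through the self-dual monad, into a sharp enough upper bound on $c_1(\Ker {\overline \beta}_+)$ to rule out codimension-$1$ degeneration. The expected line of attack parallels Rao's analysis in \cite{r} of how rank conditions on the components of a self-dual monad rigidify the associated degeneracy loci.
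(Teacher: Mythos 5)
Your reduction is exactly the paper's: apply Lemma~\ref{L:splittingdouble} with $B_0^\prime = 0$ and $c = s(0) = 1$ (via Proposition~\ref{P:bi}(c)), and get $\tH^1(\sci_X) = 0$ from Corollary~\ref{C:splittingdouble}. But the codimension-$2$ estimate, which you rightly identify as the whole content of the proposition, is not proved: you only outline a two-case strategy and explicitly leave the decisive step open. Moreover, the first case as you describe it does not work. Remark~\ref{R:c1scf}(b) applies to quotients of a direct sum of line bundles, and ${\overline B}_+^\vee$ is not one (it is merely a subbundle of $B_+^\vee$); and in any event (b) produces \emph{lower} bounds on $c_1$ of torsion-free quotients, whereas to exclude a rank-$\geq 1$ cokernel you need an \emph{upper} bound, which does not come from the monad numerology alone --- in the paper it, too, comes from the stability of $E$. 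So both of your cases hinge on the same missing input.

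The paper closes both cases at once with a single device, which is precisely the ``sharp bound'' you were looking for, except that it bounds the difference $c_1(\Ker {\overline \beta}_+) - c_1(\Ker {\overline \beta}_+^\vee)$ rather than $c_1(\Ker {\overline \beta}_+)$ itself. Let $M^\bullet$ be the reduced semi-splitting monad and $K^\bullet$ its subcomplex $0 \ra {\overline B}_+ \xra{{\overline \beta}_+} A_-^\vee$; by the self-duality of Definition~\ref{D:semisplitmonad} one has $M^\bullet/K^\bullet \simeq K^{\bullet \vee}$, so the exact sequence of complexes $0 \ra K^\bullet \ra M^\bullet \ra M^\bullet/K^\bullet \ra 0$ yields
\[
0 \lra \Ker {\overline \beta}_+^\vee \lra \Ker {\overline \beta}_+ \lra E \lra
\Cok {\overline \beta}_+^\vee \lra \Cok {\overline \beta}_+ \lra 0\, .
\]
Since $\Ker {\overline \beta}_+^\vee \simeq (\Cok {\overline \beta}_+)^\vee$ and $\rk {\overline B}_+ = \rk A_-^\vee + 1$, the quotient $\Ker {\overline \beta}_+/\Ker {\overline \beta}_+^\vee$ is a rank-$1$ torsion-free subsheaf of $E$, and stability (with $c_1 = 0$) gives $c_1(\Ker {\overline \beta}_+) - c_1(\Ker {\overline \beta}_+^\vee) \leq -1$. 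On the other hand, Remark~\ref{R:c1scf}(c), with $c_1({\overline B}_+) - c_1(A_-^\vee) = -s(0) = -1$, rewrites this difference as $c_1((\Cok {\overline \beta}_+)_{\text{tors}}) + 2\, c_1\bigl(\Cok {\overline \beta}_+/(\Cok {\overline \beta}_+)_{\text{tors}}\bigr) - 1$. If $\Cok {\overline \beta}_+$ had positive rank, Remark~\ref{R:c1scf}(b) applied to it as a quotient of $A_-^\vee$ (whose summands have degree $\geq 2$, since $\rho(-1) = 0$) would make this expression $\geq 1$, contradicting the stability bound; hence $\Cok {\overline \beta}_+$ is a torsion sheaf, and then $c_1(\Cok {\overline \beta}_+) \geq 0$ combined with the same two relations forces $c_1(\Cok {\overline \beta}_+) = 0$, i.e.\ the degeneracy locus has codimension $\geq 2$. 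This is the argument your proposal still needs to supply.
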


\begin{proof}
The reduced semi-splitting monad of $E$ has the shape
$0 \ra A_- \ra {\overline B}_+ \oplus {\overline B}_+^\vee \ra A_-^\vee \ra 0$. By
Proposition~\ref{P:bi}(c), one has $c_1({\overline B}_+^\vee) - c_1(A_-) = s(0) = 1$. Taking into 
account Lemma~\ref{L:splittingdouble} (and its corollary), one sees that it suffices to show that the
degeneracy locus of ${\overline \beta}_+$ has codimension $\geq 2$ in $\piii$.
We use the exact sequence$\, :$
\[
0 \ra \Ker {\overline \beta}_+^\vee \ra \Ker {\overline \beta}_+ \ra E \ra
\Cok {\overline \beta}_+^\vee \ra \Cok {\overline \beta}_+ \ra 0\, , 
\]
induced by the exact sequence of complexes
$0 \ra K^\bullet \ra M^\bullet \ra M^\bullet/K^\bullet \ra 0$, where $M^\bullet$ denotes the
reduced semi-splitting monad and $K^\bullet$ its subcompex
$0 \ra {\overline B}_+ \overset{{\overline \beta}_+}{\lra} A_-^\vee$ 
(see the proof of Lemma~\ref{L:splittingdouble}). Since
$\Ker {\overline \beta}_+^\vee \simeq (\Cok {\overline \beta}_+)^\vee$ and since
$\text{rk}\, {\overline B}_+ = \text{rk}\, A_-^\vee + 1$ (because $E$ has rank 2), it
follows that
$\text{rk}\, \Ker {\overline \beta}_+ = \text{rk}\, \Ker {\overline \beta}_+^\vee + 1$.
The quotient 
$\Ker {\overline \beta}_+/\Ker {\overline \beta}_+^\vee$ embeds into $E$ hence it is a
torsion free sheaf of rank 1. Since $E$ is stable with $c_1 = 0$ one deduces that$\, :$
\[
c_1(\Ker {\overline \beta}_+) - c_1(\Ker {\overline \beta}_+^\vee) \leq -1\, . 
\]
On the other hand, applying the formula from Remark~\ref{R:c1scf}(c) to
$\phi = {\overline \beta}_+$ and recalling that
$c_1({\overline B}_+^\vee) - c_1(A_-) = s(0) = 1$, one gets that$\, :$
\[
c_1(\Ker {\overline \beta}_+) - c_1(\Ker {\overline \beta}_+^\vee) =
c_1((\Cok {\overline \beta}_+)_{\text{tors}}) +
2c_1(\Cok {\overline \beta}_+/(\Cok {\overline \beta}_+)_{\text{tors}}) -1\, . 
\]
If $\Cok {\overline \beta}_+$ is not a torsion sheaf 
then, by Remark~\ref{R:c1scf}(b),
\[
c_1(\Cok {\overline \beta}_+/(\Cok {\overline \beta}_+)_{\text{tors}}) \geq 1 
\]
(it is, actually, even $\geq 2$ because $\rho(-1) = 0$). On the other hand, by
Remark~\ref{R:c1scf}(a), $c_1((\Cok {\overline \beta}_+)_{\text{tors}}) \geq 0$. It follows
that $\Cok {\overline \beta}_+$ is a torsion sheaf and that one has 
$c_1(\Cok {\overline \beta}_+) = 0$ hence the support of $\Cok {\overline \beta}_+$ has
codimension at least 2 in $\piii$. 
\end{proof}

\begin{corollary}\label{C:s(0)=1}
Under the hypothesis of Proposition~\emph{\ref{P:s(0)=1}}, if, moreover,
$s(1) = s(2) = 2$ then ${\fam0 H}^0(\sci_X(2)) \neq 0$ $($and ${\fam0 H}^1(\sci_X(l)) = 0$
for $l \leq 2)$. 
\end{corollary}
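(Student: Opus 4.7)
The plan is to combine the Hilbert-function formula of Lemma~\ref{L:Delta2h0} with the Eagon--Northcott exact sequence (\ref{E:ix(c)}) produced in the proof of Lemma~\ref{L:splittingdouble}.

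First, I would check that all hypotheses of Lemma~\ref{L:Delta2h0} are met with $c = 1$. Proposition~\ref{P:s(0)=1} supplies the double structure $Y \supset X$ with $\sci_Y = \Ker(\sci_X \twoheadrightarrow \omega_X(2))$, and Corollary~\ref{C:splittingdouble} (applied to the reduced semi-splitting monad, in which $B_0 = 0$ since Corollary~\ref{C:si=2}(a) gives $b_0 = 0$, hence $B_0^\prime = 0$) yields $\tH^0(\sci_X(1)) = 0$ together with $\tH^1(\sci_X(l)) = 0$ for $l \leq 0$; these vanishings already take care of the $l \leq 0$ part of the assertion. Lemma~\ref{L:Delta2h0} then gives $s(i) = (\Delta^2 \h^0_X)(i)$ for $i \geq 0$; substituting $\h^0_X(-2) = \h^0_X(-1) = 0$, $\h^0_X(0) = 1$ (the last from $\tH^1(\sci_X) = 0$), and $s(0) = 1$, $s(1) = s(2) = 2$, I obtain $\h^0_X(1) = 4$ and $\h^0_X(2) = 9$. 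The long exact sequence of $0 \ra \sci_X(l) \ra \sco_\piii(l) \ra \sco_X(l) \ra 0$ then forces $\h^0(\sci_X(2)) = 1 + \h^1(\sci_X(2)) \geq 1$, so $\tH^0(\sci_X(2)) \neq 0$; and for $l = 1$, together with $\h^0(\sci_X(1)) = 0$, it gives $\h^1(\sci_X(1)) = \h^0(\sco_X(1)) - 4 = 0$.

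There remains the vanishing $\h^1(\sci_X(2)) = 0$, equivalently $\h^0(\sci_X(2)) \leq 1$. I would extract this by twisting (\ref{E:ix(c)}), which here reduces to $0 \ra A_- \ra \overline{B}_+^\vee \ra \sci_X(1) \ra 0$, by $\sco_\piii(1)$. Since $A_-$ is a direct sum of line bundles $\sco_\piii(i)$ with $i < 0$, $\tH^1(A_-(1)) = 0$, so $\h^0(\sci_X(2)) = \h^0(\overline{B}_+^\vee(1)) - \h^0(A_-(1))$. The defining sequence $0 \ra A_{\geq 0} \ra B_+ \ra \overline{B}_+ \ra 0$ furnishes the bound $\h^0(\overline{B}_+^\vee(1)) \leq \h^0(B_+^\vee(1)) = b_1$, while Corollary~\ref{C:si=2}(a) and Theorem~\ref{T:rholeqs-1}(c) together give $\rho(-1) = 0$, $\rho(1) = 0$ and hence $b_1 = 1$. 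Consequently $\h^0(\sci_X(2)) \leq 1 - 0 = 1$, completing the proof.

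The main obstacle is really only this last step: one has to verify that the numerical constraints imposed by Corollary~\ref{C:si=2} and Theorem~\ref{T:rholeqs-1} conspire to give \emph{exactly} $b_1 = 1$, so that $\h^0(\sci_X(2))$ is squeezed between $1$ and $1$. Everything else is straightforward bookkeeping with the two short exact sequences above.
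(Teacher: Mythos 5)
Your argument is correct, and all the citations you use do deliver what you need: Proposition~\ref{P:s(0)=1} and Corollary~\ref{C:splittingdouble} (with $B_0'=0$, since $b_0=0$) give the hypotheses of Lemma~\ref{L:Delta2h0} with $c=1$, the identity $s(i)=(\Delta^2\h^0_X)(i)$ pins down $\h^0(\sco_X(1))=4$, $\h^0(\sco_X(2))=9$ (here you should say explicitly that $\h^0(\sco_X(-1))=\h^0(\sco_X(-2))=0$, which follows at once from $\tH^1(\sci_X(l))=0$ for $l\leq 0$ and $\tH^0(\sco_\piii(l))=0$ for $l<0$), and the squeeze $1\leq\h^0(\sci_X(2))\leq b_1=1$ finishes the job, with $b_1=1$ coming from Corollary~\ref{C:si=2}(a) and $\rho(1)=0$ exactly as in the paper. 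The difference with the paper's proof is one of assembly rather than substance: the paper never invokes Lemma~\ref{L:Delta2h0} here, but instead computes everything directly from the two exact sequences $0\ra A_-\ra{\overline B}_+^\vee\ra\sci_X(1)\ra 0$ and $0\ra{\overline B}_+^\vee\ra B_+^\vee\ra A_{\geq 0}^\vee\ra 0$; since $\rho(0)=\rho(1)=0$ forces $\tH^0(A_{\geq 0}^\vee(1))=0$, it gets the exact value $\h^0(\sci_X(2))=\h^0({\overline B}_+^\vee(1))=b_1=1$ (hence nonvanishing for free) and, in the same stroke, $\tH^1(\sci_X(l))\simeq\tH^1({\overline B}_+^\vee(l-1))=0$ for all $l\leq 2$ uniformly. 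Your route only extracts the inequality $\h^0(\sci_X(2))\leq b_1$ from the monad and recovers the nonvanishing and the vanishings at $l=1,2$ from the Hilbert-function bookkeeping; this is slightly longer but perfectly valid, and it has the mild virtue of making explicit the formula $s(i)=(\Delta^2\h^0_X)(i)$ that the paper anyway exploits immediately afterwards in Proposition~\ref{P:cards(i)=1}.
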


\begin{proof}
Since $s(2) = 2$, Theorem~\ref{T:rholeqs-1}(c) implies that $\rho(1) = 0$. One deduces,
from Corollary~\ref{C:si=2}(a), that $\rho(-1) = 0$, $\rho(0) = 0$, $b_0 = 0$ and
$b_1 = 1$. As we saw in the proof of Lemma~\ref{L:splittingdouble}, one has an exact
sequence$\, :$
\[
0 \lra A_- \overset{{\overline \beta}_+^\vee}{\lra} {\overline B}_+^\vee \lra
\sci_X(1) \lra 0 
\]
(recall that $c := c_1({\overline B}_+^\vee) - c_1(A_-) = s(0) = 1$). Since $\rho(-1) = 0$
it follows that $\h^0(\sci_X(2)) = \h^0({\overline B}_+^\vee(1))$. Using the dual of the defining
exact sequence$\, :$
\[
0 \lra A_{\geq 0} \lra B_+ \lra {\overline B}_+ \lra 0 
\]
and taking into account that $\tH^0(A_{\geq 0}^\vee(1)) = 0$ (because $\rho(0) = \rho(1) = 0$)  
one gets that $\h^0({\overline B}_+^\vee(1)) = b_1 = 1$. Moreover, 
$\tH^1(\sci_X(l)) \simeq \tH^1({\overline B}_+^\vee(l-1)) = 0$ for $l \leq 2$. 
\end{proof}

We describe now (see the below examples) the second difference functions $\Delta^2\h^0_X$ for
the curves $X$ with $\tH^0(\sci_X(2)) \neq 0$. 
One needs for that only a convenient description of
curves contained in the union of two planes (see Hartshorne \cite{ha3} and
Remark~\ref{R:xinh0h1} below)
and of curves contained in a double plane (due to Hartshorne and Schlesinger \cite{has}
and complemented by Chiarli, Greco and Nagel \cite{cgn}; their results are recalled in
Appendix~\ref{A:xin2h}).

We state, firstly, some observations concerning the computation of $\Delta^2\h^0_X$,
for $X$ space curve, that will be useful in the examples below.

\begin{remark}\label{R:Delta2h0}
(a) If $\tH^0(\sco_X(-1)) = 0$ then $(\Delta^2\h^0_X)(i) = 0$ for $i < 0$ and
$(\Delta^2\h^0_X)(0) = \h^0(\sco_X)$.

(b) Let $e$ be the \emph{speciality index} of $X$, defined by $\tH^1(\sco_X(e)) \neq 0$
and $\tH^1(\sco_X(e+1)) = 0$. Then $(\Delta^2\h^0_X)(i) = 0$ for $i > e+1$ and
$(\Delta^2\h^0_X)(e+1) = \h^1(\sco_X(e))$.

\emph{Indeed}, as we noticed in the proof of Lemma~\ref{L:Delta2h0},
$\Delta^2\h^0_X = \Delta^2\h^1_X$ (by Riemann-Roch).

(c) Assume that $X$ is arithmetically Cohen-Macaulay and consider a resolution
$0 \ra A \ra B \ra \sci_X \ra 0$, with $A$, $B$ direct sums of line bundles. Then, for any
line $L \subset \piii\, :$
\[
(\Delta^2\h^0_X)(i) = \h^0(\sco_L(i)) - \h^0(B_L(i)) + \h^0(A_L(i))\, ,\
\forall \, i \in \z\, . 
\]
Notice, in this context, that if $a < b$ are integers and $f \colon \z \ra \z$ is defined by
$f(i) : = \h^0(\sco_L(i-a)) - \h^0(\sco_L(i-b))$ then $f(i) = 0$ for $i < a$, while for
$i \geq a\, :$ 
\[
f(i) =
\begin{cases}
i - a + 1, &\text{if $a \leq i < b$;}\\
b-a,       &\text{if $i \geq b$}  
\end{cases}
\text{ and }
(\Delta f)(i) =
\begin{cases}
1, &\text{if $a \leq i < b$;}\\
0, &\text{if $i \geq b$.}  
\end{cases}
\]

(d) If $X$ is contained in a nonsingular surface $\Sigma \subset \piii$ then, for every
curve $X^\prime$ in the complete linear system $\vert \sco_\Sigma(X) \vert$, one has
$\h^0_X = \h^0_{X^\prime}$.

\emph{Indeed}, one has an exact sequence
$0 \ra \sco_\Sigma(-X) \ra \sco_\Sigma \ra \sco_X \ra 0$ and $\tH^1(\sco_\Sigma(i)) = 0$,
$\forall \, i \in \z$.

(e) Assume that $X = X_0 \cup X_1$, with $X_0$, $X_1$ locally Cohen-Macaulay curves such
that the scheme $X_0 \cap X_1$ is 0-dimensional, of length $r \geq 1$, and that it is
contained in a line $L \subset \piii$. If $\tH^1(\sci_{X_0}(i)) = 0$ and
$\tH^1(\sci_{X_1}(i)) = 0$ for $i \leq r-2$ then$\, :$
\[
\Delta^2\h^0_X = \Delta^2\h^0_{X_0} + \Delta^2\h^0_{X_1} -\chi_{\{0\}} + \chi_{\{r\}}\, , 
\]
where $\chi_A$ denotes, here, the \emph{characteristic function} of a subset $A \subset \z$.

\emph{Indeed}, consider the exact sequence
$0 \ra \sco_X \ra \sco_{X_0} \oplus \sco_{X_1} \ra \sco_{X_0 \cap X_1} \ra 0$. The maps
$\tH^0(\sco_\piii(i)) \ra \tH^0(\sco_{X_j}(i))$, $j = 0,\, 1$, are surjective for
$i \leq r-2$ and the map $\tH^0(\sco_\piii(i)) \ra \tH^0(\sco_{X_0 \cap X_1}(i))$ is
surjective for $i \geq r-1$. One deduces that the sequence$\, :$
\[
0 \ra \tH^0(\sco_X(i)) \ra \tH^0(\sco_{X_0}(i)) \oplus \tH^0(\sco_{X_1}(i)) \ra
\tH^0(\sco_{X_0 \cap X_1}(i)) \ra \tH^1(\sci_{X_0 \cap X_1}(i)) \ra 0 
\]
is exact, $\forall \, i \in \z$. Using the exact sequence
$0 \ra \sci_L \ra \sci_{X_0 \cap X_1} \ra \sco_L(-r) \ra 0$ one gets that
$\h^1(\sci_{X_0 \cap X_1}(i)) = \h^1(\sco_L(i-r))$ for $i \geq -1$. On the other hand, since
$X_0 \cap X_1$ is 0-dimensional of length $r$, $\h^1(\sci_{X_0 \cap X_1}(i)) = r$ 
for $i \leq -1$. 
\end{remark}

\begin{example}\label{Ex:xinq}
Let $0 \leq a \leq b$ be integers and let $X$ be an effective divisor of type $(a , b)$ on
a nonsingular quadric surface $\Sigma \subset \piii$, $\Sigma \simeq \pj \times \pj$.
Then $(\Delta^2\h_X^0)(i) = 0$ for $i < 0$ and for $i > a$ and
\[
(\Delta^2\h_X^0)(i) =
\begin{cases}
1,     &\text{if $i=0$;}\\
2,     &\text{if $0 < i < a$;}\\
b-a+1, &\text{if $i = a$.}
\end{cases}  
\]

\emph{Indeed}, one can assume, by Remark~\ref{R:Delta2h0}(d), that
$X = Y \cup L_1 \cup \ldots \cup L_{b-a}$, where $Y$ is the complete intersection of $\Sigma$
with a surface of degree $a$ and $L_1 , \ldots , L_{b-a}$ are mutually disjoint lines
belonging to the linear system $\vert \sco_\Sigma(0 , 1) \vert$. Then one has an exact
sequence$\, :$
\[
0 \lra {\textstyle \bigoplus}_{i=1}^{b-a}\sco_{L_i}(-a) \lra \sco_X \lra \sco_Y \lra 0 
\]
and the map $\tH^0(\sco_X(i)) \ra \tH^0(\sco_Y(i))$ is surjective, $\forall \, i \in \z$,
because $Y$ is arithmetically Cohen-Macaulay. 
\end{example}

\begin{example}\label{Ex:xincone}
Let $X$ be a curve o a quadric cone $\Sigma \subset \piii$. As it is well known, if $X$ has
even degree, let us say $2m$, then $X$ is the complete intersection of $\Sigma$ with a surface
of degree $m$ and if $X$ has odd degree, let us say $2m-1$, then it is directly linked to a
line contained in $\Sigma$ by the complete intersection of $\Sigma$ with a surface of degree
$m$. In both cases, $X$ is arithmetically Cohen-Macaulay. Using Remark~\ref{R:Delta2h0}(c) one
sees easily that in the former case $\Delta^2\h_X^0$ coincides with the corresponding function
of a divisor of type $(m , m)$ on a nonsingular quadric surface, while in the latter case the
same is true for a divisor of type $(m-1 , m)$. 
\end{example}  

\begin{remark}\label{R:xinh0h1}
Let $H_0,\, H_1 \subset \piii$ be two distinct planes and $L := H_0 \cap H_1$ their intersection
line. Assume that $H_i$ has equation $T_i = 0$, $i = 0,\, 1$ (recall that the projective
coordinate ring of $\piii$ is $S = k[T_0, \ldots , T_3]$). Consider a curve $X$ which is a
subscheme of $H_0 \cup H_1$.

If $L$ is not a component of $X$ then $X$ is the union of two plane curves intersecting in
finitely many points.

Assume, now, that $L$ is a component of $X$. Let $C_0$ be the curve defined by the ideal
sheaf $(\sci_X : T_1)$. Since $T_0T_1$ belongs to the homogeneous ideal $I(X)$ of $X$ it
follows that $C_0$ is a subscheme of $H_0$ hence $I(C_0) = (T_0\, ,\, T_1^{r_0}f_0)$, with
$r_0 \geq 0$ and $f_0 \in k[T_1 , T_2 , T_3]$ not divisible by $T_1$. Analogously, if $C_1$
is the curve defined by $(\sci_X : T_0)$ then $I(C_1) = (T_1\, ,\, T_0^{r_1}f_1)$, with
$r_1 \geq 0$ and $f_1 \in k[T_0 , T_2 , T_3]$ not divisible by $T_0$.
One has
$(I(X) : T_1)T_1 + (I(X) : T_0)T_0 \subseteq I(X)\subseteq (I(X) : T_1) \cap  (I(X) : T_0)$
hence
\[
(T_1^{r_0+1}f_0\, ,\, T_0T_1\, ,\, T_0^{r_1+1}f_1) \subseteq I(X) \subseteq
(T_1^{r_0}f_0\, ,\, T_0T_1\, ,\, T_0^{r_1}f_1)\, . 
\]
Let $X^\prime$ (resp., $X^{\prime \prime}$) be the curve defined by the homogeneous ideal situated
at the end (resp., beginning) of the above sequence of inclusions. They are both arithmetically
Cohen-Macaulay because, for example, one has a resolution$\, :$
\[
0 \lra 
\begin{matrix}
S(-r_0-2-d_0)\\
\oplus\\
S(-r_1-2-d_1)
\end{matrix}
\xra{{\left(\begin{smallmatrix} -T_0 & T_1^{r_0}f_0 & 0\\
0 & -T_0^{r_1}f_1 & T_1\end{smallmatrix}\right)}^{\text{t}}}
\begin{matrix} 
S(-r_0-1-d_0)\\
\oplus\\
S(-2)\\
\oplus\\ 
S(-r_1-1-d_1) 
\end{matrix}
\lra I(X^{\prime \prime}) \lra 0\, , 
\]
where $d_i$ is the degree of $f_i$, $i = 0,\, 1$. Moreover,
$\sci_L\sci_{X^\prime} \subseteq \sci_{X^{\prime \prime}}$ and
$\sci_{X^\prime}/\sci_{X^{\prime \prime}} \simeq \sco_L(-r_0-d_0) \oplus \sco_L(-r_1-d_1)$.

Since $X$ is locally Cohen-Macaulay one has
$\text{depth}(\sci_{X^\prime}/\sci_X)_x \geq 1$,
for any $x$ in the support of $\sci_{X^\prime}/\sci_X$, hence $\sci_{X^\prime}/\sci_X$ is a
locally free $\sco_L$-module.

\vskip2mm

\noindent
{\bf Claim.}\quad $X \neq X^\prime$.

\vskip2mm

\noindent
\emph{Proof of the claim.} Assume, by contradiction, that $X = X^\prime$. Since
$I(X) \subseteq I(L)$ it follows that $r_0 \geq 1$ and $r_1 \geq 1$. One has
$T_1^{r_0-1}f_0 \in (I(X^\prime) : T_1)$ and $T_1^{r_0-1}f_0 \notin (I(X) : T_1)$, and this is
a \emph{contradiction}. \hfill $\Diamond$

\vskip2mm

It follows, from the claim, that either $X = X^{\prime \prime}$ or
$\sci_{X^\prime}/\sci_X \simeq \sco_L(m)$, for some $m \in \z$. Since one has an epimorphism
$\sci_{X^\prime}/\sci_{X^{\prime \prime}} \ra \sci_{X^\prime}/\sci_X$ one deduces that either
$m = \text{min}(-r_0-d_0\, ,\, -r_1-d_1)$ or
$m \geq \text{max}(-r_0-d_0\, ,\, -r_1-d_1)$. 
\end{remark}

\begin{example}\label{Ex:xinh0h1}
Let $X$ be a curve contained (as a subscheme) in the union of two distinct planes $H_0$
and $H_1$ intersecting along a line $L$.

If $L$ is not a component of $X$ then $X = C_0 \cup C_1$, with $C_i \subset H_i$ of degree
$d_i$, $i = 0,\, 1$, such that the scheme $C_0\cap C_1$ is 0-dimensional, of length $r$.
Assuming that $d_0 \leq d_1$, hence that $r \leq d_0$, it follows from
Remark~\ref{R:Delta2h0}(e) that $(\Delta^2\h^0_X)(i) = 0$ for $i < 0$ and for $i \geq d_1$, 
while for $0 \leq i < d_1\, :$ 
\[
(\Delta^2\h^0_X)(i) =
\begin{cases}
1, &\text{if $i = 0$;}\\
2, &\text{if $1 \leq i < d_0$;}\\
1, &\text{if $d_0 \leq i < d_1$} 
\end{cases}
\  +\  
\begin{cases}
1, &\text{if $i = r$;}\\
0, &\text{if $i \neq r$.}  
\end{cases}  
\]

If $L$ is a component of $X$ then, with the notation from Remark~\ref{R:xinh0h1}, either
$X = X^{\prime \prime}$ or there is an exact sequence
$0 \ra \sco_L(m) \ra \sco_X \ra \sco_{X^\prime} \ra 0$. In the former case, assuming that
$r_0 + d_0 \leq r_1 + d_1$, one deduces, from Remark~\ref{R:Delta2h0}(c), that
$(\Delta^2\h^0_{X^{\prime \prime}})(i) = 0$ for $i < 0$ and for $i > r_1 + d_1$, while for
$0 \leq i \leq r_1 + d_1\, :$
\[
(\Delta^2\h^0_{X^{\prime \prime}})(i) =
\begin{cases}
1, &\text{if $i = 0$;}\\
2, &\text{if $1 \leq i \leq r_0 + d_0$;}\\
1, &\text{if $r_0 + d_0 < i \leq r_1 + d_1$.} 
\end{cases}  
\]
In the latter case, the map $\tH^0(\sco_X(i)) \ra \tH^0(\sco_{X^\prime}(i))$ is surjective,
$\forall \, i \in \z$, because $X^\prime$ is arithmetically Cohen-Macaulay. The formula for 
$\Delta^2\h^0_{X^\prime}$ can be obtained from the above formula for
$\Delta^2\h^0_{X^{\prime \prime}}$ replacing $r_i$ by $r_i - 1$, $i = 0,\, 1$, while
\[
(\Delta^2\h^0_L)(i+m) =
\begin{cases}
1, &\text{if $i = -m$;}\\
0, &\text{if $i \neq -m$.}   
\end{cases}  
\]
Recall, from Remark~\ref{R:xinh0h1}, that $-m = r_1 + d_1$ or $-m \leq r_0 + d_0$. Notice, also,
that $\tH^1(\sci_X) = 0$ if and only if $\tH^0(\sco_L(m)) = 0$ if and only if $-m > 0$. 
\end{example}

In order to describe $\Delta^2\h^0_X$ for curves $X$ contained in a double plane one has to
be able to handle the Hilbert functions of 0-dimensional subschemes of the projective
plane. It seems to us that the most convenient way of doing that is by using standard
resolutions. We recall, in the next remark, the necessary general facts about initial
ideals, Galligo's theorem, the division algorithm and Buchberger's criterion. For details
one can consult Green \cite{gr} and/or Chapter 15 in Eisenbud's textbook \cite{eis}.

\begin{remark}\label{R:gin}
Order the monomials in $S = k[T_0, \ldots , T_n]$ according to the
\emph{reverse lexicographic order} for which $T_0 > \cdots > T_n$ and
$T_0^{a_0}\ldots T_n^{a_n} > T_0^{b_0} \ldots T_n^{b_n}$ if there exists an index $i$ such that
$a_i < b_i$ and $a_j = b_j$ for $j > i$ (assuming that the two monomials have the same
degree). If $f \in S \setminus \{0\}$ is a homogeneous polynomial one denotes by
$\text{in}(f)$ the largest monomial appearing in the expression of $f$ as a sum of
non-zero monomials. If $I \subset S$ is a homogeneous ideal, one denotes by $\text{in}(I)$
the ideal generated by $\text{in}(f)$ for $f \in I \setminus \{0\}$. $I$ and $\text{in}(I)$
have the same Hilbert function.

$\text{GL}(n+1)$ acts to the left on $S$ by automorphisms of $k$-algebras by the matrix
formula$\, :$
\[
(\gamma \cdot T_0 , \dots , \gamma \cdot T_n) := (T_0 , \ldots , T_n)\gamma\, ,\
\gamma \in \text{GL}(n+1)\, . 
\]
If $f$ is an element of $S$ and if ${\widetilde f} \colon k^{n+1} \ra k$ is the polynomial
function defined by $f$ then
$(\gamma \cdot f)\sptilde = {\widetilde f} \circ \gamma^{\text{t}}$ (where $\gamma^{\text{t}}$ is
the automorphism of $k^{n+1}$ defined by the transpose of the matrix defining $\gamma$).

If $I \subset S$ is a homogeneous ideal there exists a non-empty open subset
$\mathcal{U}$ of $\text{GL}(n+1)$ such that
$\text{in}(\gamma \cdot I) = \text{in}(\gamma^\prim \cdot I)$,
$\forall \, \gamma,\, \gamma^\prim \in \mathcal{U}$. The \emph{generic initial ideal} of
$I$ is defined by $\text{gin}(I) := \text{in}(\gamma \cdot I)$, $\gamma \in \mathcal{U}$.
One says that $I$ is in \emph{generic coordinates} if the identity map of $k^{n+1}$ belongs
to $\mathcal{U}$.

The theorem of Galligo asserts that $\beta \cdot \text{gin}(I) = \text{gin}(I)$, for every
$\beta$ in the Borel subgroup $B$ of $\text{GL}(n+1)$ consisting of upper triangular
matrices. In characteristic $0$ this is equivalent to $\text{gin}(I)$ being
\emph{strongly stable}, which means that if $T_0^{a_0} \ldots T_n^{a_n} \in \text{gin}(I)$
and $a_i > 0$ then $(T_j/T_i) \cdot T_0^{a_0} \ldots T_n^{a_n} \in \text{gin}(I)$,
$\forall \, j < i$. For the slightly more general notion of \emph{stable monomial ideal}
see Eliahou and Kervaire \cite{ek}. 

Assume, from now on, that $I$ is in generic coordinates hence, in particular, that
$\text{in}(I)$ is strongly stable. Then
\[
I \text{ is saturated } \iff \text{in}(I) \text{ is saturated } \iff
T_n \text{ is $S/\text{in}(I)$-regular} 
\]
and in this case $\text{in}(I)$ is generated by monomials in $k[T_0 \ldots , T_{n-1}]$.
Let $\Delta$ be a finite set of monomials generating $\text{in}(I)$ (as an ideal) and
choose, for each $u \in \Delta$, a homogeneous polynomial $f_u \in I$ such that
$\text{in}(f_u) = u$ and none of the monomials appearing in $f_u - u$ belongs to
$\text{in}(I)$. $(f_u)_{u \in \Delta}$ is called a
\emph{reduced Gr\"{o}bner} (\emph{or standard}) \emph{basis} of $I$. It generates $I$ as an ideal
(but not necessarily in a minimal way).

The \emph{division algorithm} says that if $f \in S$ is a homogeneous polynomial then there are
uniquely determined homogeneous polynomials $q_u$, $u \in \Delta$, and $r$, of appropriate
degree, such that$\, :$
\begin{enumerate}
\item[(i)] $f = {\textstyle \sum}_{u \in \Delta}q_uf_u + r\, ;$
\item[(ii)] $q_u \in k[T_{\text{max}(u)} , \ldots , T_n]$, $\forall \, u \in \Delta\, ;$
\item[(iii)] None of the monomials appearing in $r$ belongs to $\text{in}(I)$. 
\end{enumerate}
As a matter of notation, if $u = T_0^{a_0} \ldots T_n^{a_n}$ then
$\text{max}(u) := \text{max}\{i \vb a_i > 0\}$.

Now, the module of relations between the elements of $\Delta$ has a system of generators
consisting of relations of the form$\, :$
\[
\frac{w}{u} \cdot u - \frac{w}{v} \cdot v = 0
\]
with $u,\, v \in \Delta$ and with $w$ the least common multiple of $u$ and $v$. According to
\emph{Buchberger's criterion}, the division algorithm produces a relation$\, $
\[
\frac{w}{u} \cdot f_u - \frac{w}{v} \cdot f_v =
{\textstyle \sum}_{\mu \in \Delta}q_\mu f_\mu 
\]
between the generators $f_\mu$, $\mu \in \Delta$, of $I$. According to a result of
Schreyer \cite{sch}, 
these relations generate the module of relations between $f_\mu$, $\mu \in \Delta$. 
\end{remark}

\begin{remark}\label{R:igamma}
We particularize the general facts recalled in the preceding remark to the case $n = 2$
and $I = I(\Gamma)$, where $\Gamma$ is a 0-dimensional subscheme of $\pii$. Assume that $I$
is in generic coordinates. Then $\text{in}(I)$ has a system of generators of the form$\, :$
\[
T_0^\sigma\, ,\, T_0^{\sigma - 1}T_1^{\lambda_1}\, ,\, \ldots \, ,\, T_1^{\lambda_\sigma}\, ,
\text{ with } 0 = \lambda_0 < \lambda_1 < \cdots < \lambda_\sigma \, , 
\]
where $\sigma := \text{min}\{l \geq 1 \vb \tH^0(\sci_\Gamma(l)) \neq 0\}$. Consider a
reduced Gr\"{o}bner basis $(f_i)_{0 \leq i \leq \sigma}$ of $I$, such that
$\text{in}(f_i) = T_0^{\sigma - i}T_1^{\lambda_i}$. Then$\, :$
\[
T_0 \cdot T_0^{\sigma - i}T_1^{\lambda_i} -
T_1^{\lambda_i - \lambda_{i-1}} \cdot T_0^{\sigma - i + 1}T_1^{\lambda_{i-1}} = 0\, ,\
i = 1, \ldots , \sigma\, , 
\]
is a minimal system of generators of the module of relations between the above generators
of $\text{in}(I)$. Using the result of Schreyer recalled at the end of Remark~\ref{R:gin},
one gets a graded free resolution$\, :$
\[
0 \lra {\textstyle \bigoplus}_{j=1}^\sigma S(-d_j-1) \lra
{\textstyle \bigoplus}_{i=0}^\sigma S(-d_i) \lra I \lra 0\, , 
\]
where $d_i := \sigma - i + \lambda_i$. We call it the \emph{standard resolution} of $I$.
Sheafifying, one gets a resolution$\, :$
\[
0 \lra {\textstyle \bigoplus}_{j=1}^\sigma \sco_\pii(-d_j-1) \lra
{\textstyle \bigoplus}_{i=0}^\sigma \sco_\pii(-d_i) \lra \sci_\Gamma \lra 0\, .
\]
In particular,
$\text{deg}\, \Gamma = \h^1(\sci_\Gamma(-1)) = d_1 + \cdots + d_\sigma - \sigma(\sigma -1)/2$.
%Let us, finally, recall that, according to the results of Sauer \cite{s} (adapted to the case
%of 0-dimensional subschemes of $\pii$) or to the general result of Chang \cite{ch},
%if $d_0 \leq d_1 \leq \cdots \leq d_\sigma$ is a sequence of integers then the degeneracy
%scheme of a general morphism
%$\phi \colon \bigoplus_{j=1}^\sigma \sco_\pii(-d_j-1) \ra \bigoplus_{i=0}^\sigma \sco_\pii(-d_i)$,
%mapping $\sco_\pii(-d_j-1)$ into $\bigoplus_{i=0}^j\sco_\pii(-d_i)$,
%$j = 1 , \ldots , \sigma$, is 0-dimensional and consists of simple points. 
\end{remark}

\begin{example}\label{Ex:xin2h}
Let $H \subset \piii$ be a plane and let $X$ be a curve contained, as a subscheme, in the
effective divisor $2H$ on $\piii$. According to Hartshorne and Schlesinger \cite{has}, there
exist plane curves $C_0 \subseteq C \subset H$ (that is, effective divisors on $H$), of
degree $r_0$ and $r$, respectively, and a 0-dimensional subscheme $\Gamma$ of $C_0$, locally
complete intersection in $H$, such that $\sci_X$ is the kernel of an epimorphism
$\sci_C \ra \sch om_{\sco_{C_0}}(\sci_{\Gamma , C_0} , \sco_{C_0}(-1))$. Consider the standard
resolution of $\sci_{\Gamma , H}$ from Remark~\ref{R:igamma} ($H \simeq \pii$). Since $\Gamma$
is contained in $C_0$ one has $r_0 \geq \sigma$. One deduces a resolution$\, :$
\[
0 \lra \sco_H(-r_0) \oplus {\textstyle \bigoplus}_{j=1}^\sigma \sco_H(-d_j-1) \lra
{\textstyle \bigoplus}_{i=0}^\sigma \sco_H(-d_i) \lra \sci_{\Gamma , C_0} \lra 0\, .
\]
Using the isomorphism of ``d\'{e}calage''
$\sch om_{\sco_{C_0}}(\sci_{\Gamma , C_0} , \omega_{C_0}) \simeq
\sce xt_{\sco_H}^1(\sci_{\Gamma , C_0} , \omega_H)$,
one gets that$\, :$ 
\[
\sch om_{\sco_{C_0}}(\sci_{\Gamma , C_0} , \sco_{C_0})(r_0) \simeq
\sce xt_{\sco_H}^1(\sci_{\Gamma , C_0} , \sco_H)\, . 
\]
Dualizing the above resolution of $\sci_{\Gamma , C_0}$ one, consequently, obtains a
resolution$\, :$
\begin{gather*}
0 \lra {\textstyle \bigoplus}_{i=0}^\sigma \sco_H(d_i - r_0 - 1) \lra
\sco_H(-1) \oplus {\textstyle \bigoplus}_{j=1}^\sigma \sco_H(d_j - r_0) \lra\\
\lra \sch om_{\sco_{C_0}}(\sci_{\Gamma , C_0} , \sco_{C_0}(-1)) \lra 0\, . 
\end{gather*}
Consider, now, the exact sequence$\, :$
\[
0 \lra \sch om_{\sco_{C_0}}(\sci_{\Gamma , C_0} , \sco_{C_0}(-1)) \lra \sco_X \lra
\sco_C \lra 0\, . 
\]
One deduces, firstly, that
$\tH^1(\sci_X) = 0 \iff \tH^0(\sch om_{\sco_{C_0}}(\sci_{\Gamma , C_0} , \sco_{C_0}(-1))) = 0
\iff r_0 > d_\sigma$.
Assume, from now on, that $r_0 > d_\sigma$. The map $\tH^0(\sco_X(i)) \ra \tH^0(\sco_C(i))$ is
surjective, $\forall \, i \in \z$, because $C$ is arithmetically Cohen-Macaulay. Using the
above resolution of $\sch om_{\sco_{C_0}}(\sci_{\Gamma , C_0} , \sco_{C_0}(-1))$, one gets that
$\h^0(\sco_X(i))$ equals$\, :$ 
\[
\h^0(\sco_C(i)) + [\h^0(\sco_H(i-1)) - \h^0(\sco_H(i + \sigma - r_0 -1))] +
{\textstyle \sum}_{j=1}^\sigma \h^0(\sco_L(i + d_j - r_0)), 
\]
where $L \subset H$ is a line.
If $F(i) : = \h^0(\sco_H(i-1)) - \h^0(\sco_H(i + \sigma - r_0 -1))$ and $f := \Delta F$ then
$f(i) = \h^0(\sco_L(i-1)) - \h^0(\sco_L(i + \sigma - r_0 -1))$. Using the last formula in
Remark~\ref{R:Delta2h0}(c), one gets that $(\Delta^2\h^0_X)(i) = 0$ for $i < 0$ and for
$i \geq r$ and that$\, :$
\[
(\Delta^2\h^0_X)(i) =
\begin{cases}
1, &\text{if $i = 0$;}\\
2 + \text{card}\, \{1 \leq j \leq \sigma \vb d_j = r_0 - i\},
&\text{if $1 \leq i \leq r_0 - \sigma$;}\\
1, &\text{if $r_0 - \sigma < i < r$.} 
\end{cases}  
\]
\end{example}

\begin{proposition}\label{P:cards(i)=1}
Let $E$ be a stable rank $2$ vector bundle on $\piii$ with $c_1 = 0$. Assume, with the
notation from Definition~\emph{\ref{D:spectrum}}, that $s(0) = 1$, $s(1) = 2$ and
$s(2) = 2$. Then, except for the case where there exists $m \geq 3$ such that
$s(i) = 2$ for $1 \leq i < m$, $s(m) \geq 4$ and $s(i) = 0$ for $i > m$, one has
\[
{\fam0 card}\, \{i \geq 1 \vb s(i) = 1\} \geq
{\textstyle \sum}_{i \geq 1}{\fam0 max}(s(i) - 2 , 0) - 1\, . 
\]
\end{proposition}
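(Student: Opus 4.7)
The plan is to reduce Proposition~\ref{P:cards(i)=1} to a statement about the second difference of the Hilbert function of a curve on a quadric surface, and then to verify it by case analysis. By Proposition~\ref{P:s(0)=1}, the hypothesis $s(0) = 1$ together with $s(1) \geq 2$ produces a locally Cohen-Macaulay curve $X \subset \piii$ with $\tH^1(\sci_X) = 0$ such that $E(1)$ has a global section whose zero scheme is a double structure $Y$ on $X$, with $\sci_Y$ the kernel of an epimorphism $\sci_X \ra \omega_X(2)$. Corollary~\ref{C:s(0)=1} further gives $\tH^0(\sci_X(2)) \neq 0$ and $\tH^1(\sci_X(l)) = 0$ for $l \leq 2$. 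Applying Lemma~\ref{L:Delta2h0} with $c = 1$ yields $s(i) = (\Delta^2 \h^0_X)(i)$ for $i \geq 0$, so setting $N := \text{card}\{i \geq 1 : s(i) = 1\}$ and $M := \sum_{i \geq 1}\max(s(i) - 2, 0)$, the task reduces to showing $N \geq M - 1$ unless $s$ has the excluded shape. Since $\tH^0(\sci_X(2)) \neq 0$, $X$ is contained in a quadric surface $Q$, which is either a nonsingular quadric, a quadric cone, the union of two distinct planes, or a double plane.

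If $Q$ is a nonsingular quadric, Example~\ref{Ex:xinq} describes $X$ as a divisor of type $(a, b)$ with $a \geq 3$ (forced by $s(2) = 2$), giving $s$ the shape $(1, 2, \ldots, 2, b-a+1)$ with $a - 1$ twos; direct calculation yields $N \geq M - 1$ whenever $b - a \leq 2$, while $b - a \geq 3$ produces exactly the excluded pattern with $m := a$ and $s(m) = b - a + 1 \geq 4$. If $Q$ is a quadric cone, Example~\ref{Ex:xincone} reduces the analysis to divisors of type $(m, m)$ or $(m - 1, m)$, both of which satisfy the inequality with $M = 0$ and cannot match the excluded shape.

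For the reducible cases, I would use the formulas from Examples~\ref{Ex:xinh0h1} and \ref{Ex:xin2h}. When $Q$ is the union of two distinct planes, $s$ consists of a 2-plateau followed by a 1-plateau, possibly with one position where $s(r) = 3$ (occurring when the intersection line is not a component of $X$); the constraints $s(0) = 1$ and $s(1) = s(2) = 2$ force any such $+1$ bump to occur at position $r \geq 3$, and inspection of the subcases gives $M \leq 1$ and $N \geq M - 1$. When $Q$ is a double plane $2H$, Example~\ref{Ex:xin2h} combined with the standard resolution of the associated zero-dimensional subscheme $\Gamma \subset H$ from Remark~\ref{R:igamma} yields $s(i) = 2 + \text{card}\{1 \leq j \leq \sigma : d_j = r_0 - i\}$ for $1 \leq i \leq r_0 - \sigma$, $s(i) = 1$ for $r_0 - \sigma < i < r$, and $s(i) = 0$ for $i \geq r$, where $d_0 = \sigma < d_1 < \cdots < d_\sigma$ are the resolution degrees (with $d_\sigma \leq r_0 - 3$ forced by $s(1) = s(2) = 2$). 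Since the $d_j$ are distinct, $M = \sigma$, and $N = r - r_0 + \sigma - 1 \geq \sigma - 1 = M - 1$ using $r \geq r_0$. Neither reducible case matches the excluded shape, as a single $+1$ bump raises only one $s(i)$ by exactly one.

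The main obstacle will be the double-plane case, which requires tracking how the distinctness of the degrees $d_j$ in the standard resolution of $\sci_{\Gamma, H}$ forces $M$ to equal exactly $\sigma$ rather than a larger quantity; the remaining cases amount to routine bookkeeping with the formulas from the cited examples. No case other than $Q$ a smooth quadric with $b - a \geq 3$ produces the excluded shape, yielding the proposition.
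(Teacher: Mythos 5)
Your proposal is correct and follows the paper's proof essentially verbatim: the same reduction via Proposition~\ref{P:s(0)=1}, Corollary~\ref{C:s(0)=1} and Lemma~\ref{L:Delta2h0} to $s(i)=(\Delta^2\h^0_X)(i)$ with $\tH^0(\sci_X(2))\neq 0$, followed by the same four-case analysis through Examples~\ref{Ex:xinq}, \ref{Ex:xincone}, \ref{Ex:xinh0h1} and \ref{Ex:xin2h}, with the smooth-quadric case $b-a\geq 3$ as the only source of the excluded spectrum. One small inaccuracy: in the double-plane case the degrees satisfy only $d_1\leq\cdots\leq d_\sigma$ (they need not be distinct, since $d_i-d_{i-1}=\lambda_i-\lambda_{i-1}-1\geq 0$), but this is harmless because $\sum_{i\geq 1}\max(s(i)-2,0)\leq\sigma$ holds in any case (there are only $\sigma$ indices $j$), and that bound together with $r\geq r_0$ already yields the inequality, exactly as in the paper.
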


\begin{proof}
It follows, from Proposition~\ref{P:s(0)=1}, that $E(1)$ has a global section whose zero
scheme $Y$ is a double structure on a curve $X$ with $\tH^1(\sci_X) = 0$ such that $\sci_Y$
is the kernel of an epimorphism $\sci_X \ra \omega_X(2)$. Moreover, by Corollary~\ref{C:s(0)=1},
one has $\tH^0(\sci_X(2)) \neq 0$. One deduces, from Lemma~\ref{L:Delta2h0}, that
$s(i) = (\Delta^2\h^0_X)(i)$, $\forall \, i \geq 0$.

If $X$ is contained in a nonsingular quadric surface then Example~\ref{Ex:xinq} shows that
either the spectrum of $E$ is $(\ldots , 0 , 1^2 , 2^2)$ or there exists an integer
$m \geq 3$ such that $s(0) = 1$, $s(i) = 2$ for $1 \leq i < m$, $s(m) \geq 1$ and $s(i) = 0$
for $i > m$. If $X$ is contained in a cone the same holds except that
$s(m) \in \{1 , 2\}$ (see Example~\ref{Ex:xincone}). 

If $X$ is contained in the union of two planes then Example~\ref{Ex:xinh0h1} implies that
$s(i) \leq 3$, $\forall \, i \geq 0$, and $s(i) = 3$ for at most one $i \geq 0$.

Assume, finally, that $X$ is contained in a double plane. Using the notation from
Example~\ref{Ex:xin2h}, the condition $s(1) = s(2) = 2$ implies that $r_0 - 2 > d_\sigma$, 
by the last formula in that example. By the same formula$\, :$
\[
{\textstyle \sum}_{i \geq 1}\text{max}(s(i) - 2 , 0) = \sigma \text{ and }
\text{card}\, \{i \geq 1 \vb s(i) = 1\} = (r-1) - (r_0 - \sigma)\, .   
\]
Since $r \geq r_0$ one gets the relation from the conclusion of the proposition. 
\end{proof}

\begin{example}\label{Ex:q2har}
It follows, from Proposition~\ref{P:cards(i)=1}, that there is no stable rank 2 vector
bundle on $\piii$ with $c_1 = 0$, $c_2 = 21$, and spectrum
$(\ldots , 0 , 1^2 , 2^2, 3^4 , 4^2)$ (compare with Hartshorne and Rao
\cite[Prop.~2.15]{har}). The same is true for the spectrum
$(\ldots , 0 , 1^2, 2^2 , 3^3 , 4^3)$. 
\end{example}

\section{Spectra and double structures on space curves, II}\label{S:doubleii}

We examine, in this section, the case of the stable rank 2 vector bundles $E$ on
$\piii$ with $c_1 = 0$ whose spectrum satisfies the property that $s(0) = 2$,
$s(1) = 2$ and $s(2) \geq 2$. We shall need the following lemma, whose proof is
postponed to Appendix~\ref{A:h0e(1)=2}.

\begin{lemma}\label{L:h0e(1)=2}
Let $E$ be a stable rank $2$ vector bundle on $\piii$ with $c_1 = 0$ and $c_2 \geq 2$.
Then ${\fam0 h}^0(E(1)) \leq 2$ and if ${\fam0 h}^0(E(1)) = 2$ then the zero scheme of an
arbitrary non-zero global section of $E(1)$ is a curve $X$ such that one of the following
holds$\, :$

\emph{(i)} $X$ is a divisor of type $(d , 0)$ or $(0 , d)$ one a nonsingular quadric
surface $Q \simeq \pj \times \pj\, ;$

\emph{(ii)} $X$ is a double structure on a plane curve $C$ of degree $d$ whose ideal
sheaf $\sci_X$ is the kernel of an epimorphism $\sci_C \ra \omega_C(2)$. Recall that
plane curve means effective divisor on some plane in $\piii$. 
\end{lemma}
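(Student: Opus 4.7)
The plan is to pass via Serre's construction to the zero scheme of a nonzero section of $E(1)$ and, using a second independent section, force this scheme to lie in a quadric surface, reducing the lemma to a classification of $\omega = \sco(-2)$ curves on quadrics. First I would observe that if $s \in \tH^0(E(1))$ is nonzero, its zero scheme $X := Z(s)$ has pure codimension two: otherwise $s$ would factor through $E(1-k)$ for some $k \geq 1$, which has no sections by stability of $E$. Hence $X$ is a locally Cohen-Macaulay curve of degree $c_2+1$ fitting into the Serre sequence $0 \ra \sco_\piii \xra{s} E(1) \ra \sci_X(2) \ra 0$, which yields $\h^0(E(1)) = 1 + \h^0(\sci_X(2))$ together with $\omega_X \simeq \sco_X(-2)$ (by the formula $\omega_X = \omega_\piii \otimes \det E(1)\vert_X$); twisting this sequence by $\sco(-1)$ and using $\tH^0(E) = 0$ also gives $\tH^0(\sci_X(1)) = 0$, so $X$ does not lie in any plane. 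Assuming $\h^0(E(1)) \geq 2$, I would next pick independent sections $s_0, s_1$ and form $q := s_0 \wedge s_1 \in \tH^0(\sco_\piii(2))$; non-vanishing of $q$ is immediate from stability, since otherwise $s_0, s_1$ would generate a rank-one subsheaf contained in a line subbundle $\sco(k)$ with $k \leq 0$, forcing $\h^0(\sco(k)) \leq 1$. At every point $x \notin Q := \{q = 0\}$ the pair $s_0(x), s_1(x)$ is a basis of $E(1)_x$, so the zero scheme of every section in $\langle s_0, s_1\rangle$ lies in $Q$; in particular $X \subseteq Q$.

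The heart of the argument is then a case analysis on $Q$, driven by the constraint $\omega_X \simeq \sco_X(-2)$. When $Q$ is nonsingular, $X$ is an effective divisor of some type $(a, b)$ on $Q \simeq \pj \times \pj$ with $a + b = c_2 + 1$; adjunction gives $\omega_X \simeq \sco_Q(a-2, b-2)\vert_X$, so the constraint is equivalent to $\sco_Q(a, b)\vert_X \simeq \sco_X$. Its degree on $X$ is the self-intersection $(a, b) \cdot (a, b) = 2ab$, forcing $ab = 0$, so $X$ is of type $(d, 0)$ or $(0, d)$; triviality of $\sco_Q(d, 0)\vert_X$ is then automatic, as this line bundle is the pull-back under the first projection of a line bundle on a zero-dimensional subscheme of $\pj$, hence is trivial, so Serre's construction is compatible. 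This is case (i), and $\h^0(\sci_X(2)) = 1$ follows from $0 \ra \sco(-2) \ra \sci_X \ra \sci_{X , Q} \ra 0$. When $Q$ is an irreducible quadric cone, Example~\ref{Ex:xincone} shows every $X \subset Q$ is arithmetically Cohen-Macaulay with a dualizing sheaf incompatible with $\sco_X(-2)$ for $\deg X \geq 3$, ruling this subcase out. When $Q$ is reducible into two distinct planes, Example~\ref{Ex:xinh0h1} combined with an adjunction-on-components computation shows that only configurations with $c_2 \leq 1$ survive, contradicting the hypothesis $c_2 \geq 2$. The remaining case $Q = 2H$ is a double plane; there, the Hartshorne--Schlesinger description (Appendix~\ref{A:xin2h}) parametrizes $X$ by a plane curve $C \subset H$, a sub-divisor $C_0 \subseteq C$, and a zero-dimensional subscheme $\Gamma \subset C_0$, and matching this parametrization against $\omega_X \simeq \sco_X(-2)$ selects exactly $C_0 = C$ and $\Gamma = \emptyset$, yielding the Ferrand double structure $\sci_X = \ker(\sci_C \ra \omega_C(2))$ of case (ii); once more $\h^0(\sci_X(2)) = 1$ follows directly.

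The bound $\h^0(E(1)) \leq 2$ is then a consequence of the classification, since both cases (i) and (ii) give $\h^0(\sci_X(2)) = 1$. As an alternative direct check, a third independent section $s_2$ would, by the rank-two Cramer identity $(s_1 \wedge s_2)s_0 - (s_0 \wedge s_2)s_1 + (s_0 \wedge s_1)s_2 = 0$ in $E(3)$, produce a second quadric $Q^\prime \neq Q$ containing $X$, so $\deg X \leq \deg(Q \cap Q^\prime) = 4$, limiting $c_2 \leq 3$ and allowing a finite case-by-case contradiction using the classification above. The hard part I expect is the singular and reducible analysis of $Q$: cleanly excluding the cone and two-planes subcases and then, in the double-plane case, precisely matching the Hartshorne--Schlesinger data with the Serre constraint to isolate the Ferrand structure. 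This is where the careful bookkeeping of dualizing sheaves, ideal-sheaf resolutions, and the explicit description in Appendix~\ref{A:xin2h} does the bulk of the work, and presumably corresponds to the detailed verification carried out in Appendix~\ref{A:h0e(1)=2}.
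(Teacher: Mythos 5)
Your overall strategy is the same as the paper's: Serre construction giving a locally complete intersection curve $X$ with $\omega_X \simeq \sco_X(-2)$ lying on a quadric, followed by a case analysis on that quadric; the nonsingular-quadric case is handled exactly as in the paper, and getting the quadric from $s_0 \wedge s_1$ is just a variant of reading $\tH^0(\sci_X(2)) \neq 0$ off the Serre sequence. But the two hard exclusions are not actually proved. For the quadric cone you assert that the dualizing sheaf of a curve on the cone is ``incompatible with $\sco_X(-2)$'' and cite Example~\ref{Ex:xincone}; that example only records that such curves are arithmetically Cohen--Macaulay and computes $\Delta^2\h^0_X$, and for curves of odd degree (linked to a ruling line) $\omega_X$ is not even a twist of $\sco_X$, so the claimed incompatibility needs an argument you do not give. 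The paper's route is different and cleaner: ACM gives $\tH^1_\ast(\sci_X)=0$, hence $\tH^1_\ast(E)=0$ by the Serre sequence, so $E$ would split, contradicting stability. Likewise, in the two-planes case the subcase where the line $L = H_0 \cap H_1$ is a component of $X$ is not an ``adjunction-on-components'' computation: the paper uses Remark~\ref{R:xinh0h1} to produce an ACM curve $X^\prime$ with $\sci_{X^\prime}/\sci_X \simeq \sco_L(m)$, shows $m=0$ from $\omega_X \simeq \sco_X(-2)$, and then gets $\h^1(E(-1)) = \h^1(\sci_X) = 1$, contradicting $c_2 \geq 2$ via Riemann--Roch. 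Only the subcase where $L$ is not a component is covered by Lemma~\ref{L:z0cupz1}-type adjunction as you describe.

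The most serious gap is the double-plane case, which is where the substance of the lemma lies and which you explicitly defer to ``the detailed verification carried out in Appendix~\ref{A:h0e(1)=2}''. Moreover your intermediate claim is wrong: matching the Hartshorne--Schlesinger triple $(C_0, C, \Gamma)$ against $\omega_X \simeq \sco_X(-2)$ does \emph{not} select $\Gamma = \emptyset$. What one actually gets (via Remark~\ref{R:homegax}) is $\sci_{\Gamma , C_0} \simeq \sco_{C_0}(-r_0)$, i.e.\ $\Gamma$ is a nonempty complete intersection of $C_0$ with a curve of degree $r_0$ (already for the genus $-1$ double line, $\Gamma$ is a point), together with $\sci_C/\sci_X \simeq \omega_{C_0}(2)$. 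The remaining and genuinely hard step is to prove $C = C_0$, and this is not a formal matching: the paper argues by contradiction, first showing, via the Chiarli--Greco--Nagel normal form and an explicit local free resolution, that $C_0$ and the residual divisor $C_1$ can have no common component (otherwise $X$ would fail to be a local complete intersection at some point outside $\Gamma$), and then using Lemma~\ref{L:z0cupz1}(b) to contradict $\omega_X \simeq \sco_X(-2)$ when $C_1 \neq \emptyset$. None of this appears, even in outline, in your proposal; consequently the classification, and with it your deduction of $\h^0(E(1)) \leq 2$ from ``$\h^0(\sci_X(2)) = 1$ in both cases'' (a bound the paper simply quotes from Ellia and Gruson \cite{eg}), is not established.
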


The inequality $\h^0(E(1)) \leq 2$ in the statement of the lemma is well known$\, :$
see Ellia and Gruson \cite{eg}. The characterization of the bundles with $\h^0(E(1)) = 2$
might be, also, well known but we are not aware of a reference. Notice that in
case (i) the spectrum of $E$ is $(0^{d-1})$ while in case (ii) it is
$(\ldots , 0, 1, 2, \ldots , d-1)$ (by Lemma~\ref{L:Delta2h0}).

\begin{proposition}\label{P:s(0)=s(1)=2}
Let $E$ be a stable rank $2$ vector bundle on $\piii$ with $c_1 = 0$. Consider the
spectrum of $E$ \emph{(}see Definition~\emph{\ref{D:spectrum}}\emph{)} and its minimal
\emph{(}resp., reduced\emph{)} semi-splitting monad
\emph{(}see Definition~\emph{\ref{D:rho}}\emph{)}.
Assume that $s(0) = 2$, $s(1) = 2$ and $s(2) \geq 2$ hence that, by
Corollary~\emph{\ref{C:si=2}(b)}, $b_0 = 2$, $\rho(-1) \leq 1$ and $\rho(0) = 0$. 
Let us denote the morphism
$({\overline \beta}_+ , \beta_0) \colon {\overline B}_+ \oplus B_0 \ra A_-^\vee$ by
${\overline \beta}_{\geq 0}$. 

\vskip2mm 

\emph{(a)} If $\rho(-1) = 0$ then there exists a trivial rank $1$ subbundle $B_0^\prime$
of $B_0$ such that the degeneracy scheme of the restriction 
${\overline \beta}_{\geq 0}^\prim \colon {\overline B}_+ \oplus B_0^\prime \ra A_-^\vee$ of
${\overline \beta}_{\geq 0}$ 
is a locally Cohen-Macaulay curve $X$ with ${\fam0 h}^0(\sci_X(2)) = 1$ and
${\fam0 H}^1(\sci_X(l)) = 0$ for $l \leq 1$. In this case, by
Lemma~\emph{\ref{L:splittingdouble}}, $E(2)$ has a global section whose zero scheme is a 
double structure $Y$ on $X$ such that the ideal sheaf $\sci_Y$ is the kernel of an
epimorphism $\sci_X \ra \omega_X$.

\vskip2mm 

\emph{(b)} If $\rho(-1) = 1$ one has $A_-^\vee = A_{\leq -2}^\vee \oplus \sco_\piii(1)$ and
the image of ${\overline \beta}_+ \colon {\overline B}_+ \ra A_-^\vee$ is contained in
$A_{\leq -2}^\vee$. Let
${\overline \beta}_{2+} \colon {\overline B}_+ \ra A_{\leq -2}^\vee$ be the morphism coinduced
by ${\overline \beta}_+$. 
Then the degeneracy scheme of ${\overline \beta}_{2+}$ 
$($resp., 
${\overline \beta}_{\geq 0})$ is a
locally Cohen-Macaulay curve $X^\prime$ $($resp., $X)$ such that
${\fam0 H}^1(\sci_{X^\prime}(l)) = 0$ for $l \leq 1$, $X^\prime \subset X$ as schemes and
$\sci_{X^\prime}/\sci_X \simeq \sco_L$, for some line $L \subset \piii$. Moreover, $E(1)$
has a global section with zero scheme $Y$ such that the ideal sheaf $\sci_Y$ is the
kernel of an epimorphism $\sci_{X^\prime} \ra \omega_X(2)$. 
\end{proposition}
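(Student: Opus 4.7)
The plan is to reduce both parts to Lemma~\ref{L:splittingdouble}, with the main technical work being a codimension-$2$ check modelled on the proof of Proposition~\ref{P:s(0)=1}. For part (a), since $B_0 \simeq 2\sco_\piii$ carries the non-degenerate skew form $\phi$, every rank-one subbundle $B_0^\prim \subset B_0$ is automatically totally isotropic. Pick such a $B_0^\prim$ and set $\psi := ({\overline \beta}_+ , \beta_0^\prim) \colon {\overline B}_+ \oplus B_0^\prim \ra A_-^\vee$; then $\rk ({\overline B}_+ \oplus B_0^\prim) = \rk A_-^\vee + 1$ and the expected codimension of the degeneracy scheme is $2$. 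Repeating the cohomological argument of Proposition~\ref{P:s(0)=1} (the isotropy of $B_0^\prim$ ensures that the quotient of the reduced semi-splitting monad by the subcomplex $K^\bullet \colon 0 \ra {\overline B}_+ \oplus B_0^\prim \ra A_-^\vee$ is isomorphic to $K^{\bullet \vee}$, yielding $0 \ra \Ker \psi^\vee \ra \Ker \psi \ra E \ra \Cok \psi^\vee \ra \Cok \psi \ra 0$), the rank-one torsion-free subquotient $\Ker \psi/\Ker \psi^\vee$ of the stable bundle $E$ has $c_1 \leq -1$, and Remark~\ref{R:c1scf}(c), applied with $c_1({\overline B}_+) - c_1(A_-^\vee) = -s(0) = -2$, gives $c_1((\Cok \psi)_{\text{tors}}) + 2c_1(\Cok \psi/(\Cok \psi)_{\text{tors}}) \leq 1$. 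Since Corollary~\ref{C:si=2}(b) forces $\rho(-2) = 0$ and the present hypothesis is $\rho(-1) = 0$, every summand of $A_-^\vee$ has degree $\geq 3$, so Remark~\ref{R:c1scf}(b) excludes non-torsion quotients and $\Cok \psi$ is torsion with $c_1 \leq 1$. To promote torsion-ness to codimension-$2$ support I vary $B_0^\prim$ over its parameter $\pj$: a planar component common to all choices would be forced to lie in the degeneracy scheme of ${\overline \beta}_{\geq 0}$, producing (via Lemma~\ref{L:unstable}) a plane $H$ with $\tH^0(E_H(-m)) \neq 0$ for some $m$ inconsistent with the constraints imposed by $s(2) \geq 2$ and Corollary~\ref{C:si=2}(b). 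For a general $B_0^\prim$ the degeneracy locus is therefore a curve $X$, and Lemma~\ref{L:splittingdouble} (with $c = s(0) = 2$) together with Corollary~\ref{C:splittingdouble} produces the double structure $Y$, the epimorphism $\sci_X \ra \omega_X$, and the invariants $\h^0(\sci_X(2)) = \h^0(B_0^{\prim \vee}) = 1$, $\tH^1(\sci_X(l)) = 0$ for $l \leq 1$.

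For part (b), the hypothesis $\rho(-1) = 1$ forces $A_-^\vee = A_{\leq -2}^\vee \oplus \sco_\piii(1)$. In a minimal matrix for ${\overline \beta}_+$, entries landing in the $\sco_\piii(1)$ summand are polynomials of degree $1-i$ with $i \geq 1$, hence vanish when $i \geq 2$ and are constants when $i = 1$; minimality forces these constants to vanish, so ${\overline \beta}_+$ factors through $A_{\leq -2}^\vee$, yielding ${\overline \beta}_{2+}$. Applying the $c_1$-argument of part (a) to ${\overline \beta}_{2+}$ (now with $c_1({\overline B}_+^\vee) - c_1(A_{\leq -2}) = s(0) - 1 = 1$) shows that $\Cok {\overline \beta}_{2+}$ is a codimension-$2$ torsion sheaf, and the Eagon--Northcott complex delivers $0 \ra A_{\leq -2} \ra {\overline B}_+^\vee \ra \sci_{X^\prim}(1) \ra 0$ with $\tH^1(\sci_{X^\prim}(l)) = 0$ for $l \leq 1$. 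The component $\beta_0^{(L)} \colon B_0 \ra \sco_\piii(1)$ is given by linear forms $(\ell_1,\ell_2)$, linearly independent by the non-degeneracy of $\phi$ combined with minimality; it fits into the Koszul resolution $0 \ra \sco_\piii(-1) \ra 2\sco_\piii \ra \sco_\piii(1) \ra \sco_L(1) \ra 0$ of the line $L = \{\ell_1 = \ell_2 = 0\}$. Placing the subcomplex $L^\bullet \colon 0 \ra {\overline B}_+ \ra A_{\leq -2}^\vee$ inside $K^\bullet \colon 0 \ra {\overline B}_+ \oplus B_0 \ra A_-^\vee$, the quotient $K^\bullet/L^\bullet \colon 0 \ra B_0 \ra \sco_\piii(1)$ has cohomology sheaves $\sco_\piii(-1)$ and $\sco_L(1)$; a diagram chase combining the long exact sequence in cohomology with the Eagon--Northcott sequence identifies $X \supset X^\prim$ with $\sci_{X^\prim}/\sci_X \simeq \sco_L$, and the long exact sequence for $K^\bullet \subset M^\bullet$ applied to the full reduced semi-splitting monad produces the short exact sequence $0 \ra \sci_Y \ra \sci_{X^\prim} \ra \omega_X(2) \ra 0$, exhibiting the claimed global section of $E(1)$ with zero scheme $Y$.

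The main obstacle is the final diagram chase in part (b): one must simultaneously control the Eagon--Northcott sequence for ${\overline \beta}_{2+}$, the Koszul sequence for $L$, and the off-diagonal component $\sigma \colon B_0 \ra A_{\leq -2}^\vee$ of $\beta_0$ (pinned down by self-duality) in order to identify $\sci_{X^\prim}/\sci_X$ precisely with $\sco_L$ and to extract the epimorphism $\sci_{X^\prim} \ra \omega_X(2)$ whose kernel is $\sci_Y$. By comparison, the codimension argument in part (a), while requiring a genericity step to exclude fixed planar components, follows the pattern of Proposition~\ref{P:s(0)=1} closely.
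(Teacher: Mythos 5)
There is a genuine gap, and it sits exactly at the hardest point of part (a): ruling out the possibility that $c_1(\Cok {\overline \beta}_{\geq 0}^\prim) = 1$ for \emph{every} rank $1$ trivial subbundle $B_0^\prime \subset B_0$. Your proposed mechanism --- ``a planar component common to all choices would be forced to lie in the degeneracy scheme of ${\overline \beta}_{\geq 0}$, producing (via Lemma~\ref{L:unstable}) a plane $H$ with $\tH^0(E_H(-m)) \neq 0$ inconsistent with $s(2) \geq 2$'' --- does not work as stated. First, no common plane is forced a priori: different choices of $B_0^\prime$ could contribute different planar components, and the actual structure one must exploit is that $\Cok {\overline \beta}_+$ is torsion with $c_1 = 2$, so its support contains at most two planes, and a pigeonhole over the $\pj$ of subbundles $B_0^\prime$ shows that the image of all of $B_0$ in $\Cok {\overline \beta}_+$ is annihilated by a single linear form $h$. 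Second, Lemma~\ref{L:unstable} cannot be invoked here: its hypothesis is the existence of a linear form annihilating $N_{-m-1}$ inside $\tH^1(E(-m))$, and nothing in the degeneracy-locus picture supplies that; moreover, the existence of a plane $H_0$ with $\tH^0(E_{H_0}(-m)) \neq 0$ is not by itself inconsistent with $s(0)=s(1)=2$, $s(2)\geq 2$ --- no such incompatibility is proved in the paper or by you. The paper's actual contradiction is different and essential: from $hB_0(-1) \subseteq \Ker {\overline \beta}_{\geq 0}$ and $\rho(-1)=0$ one gets $\h^0(E(1)) \geq 2$, and then the classification of stable bundles with $\h^0(E(1)) = 2$ (Lemma~\ref{L:h0e(1)=2}, proved in Appendix~\ref{A:h0e(1)=2}) shows the spectrum would have to be $(0^{d-1})$ or $(\ldots,0,1,2,\ldots,d-1)$, contradicting $s(0)=s(1)=2$. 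Your proposal never uses Lemma~\ref{L:h0e(1)=2}, which is precisely the ingredient the introduction flags as indispensable for this proposition; without it (or a genuine substitute) part (a) is not proved.

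Part (b) is closer in outline to the paper (the factorization of ${\overline \beta}_+$ through $A_{\leq -2}^\vee$ by minimality, and the subcomplex/quotient-complex bookkeeping with the Koszul complex of $L$ are the right moves), but two steps are not justified as you state them. The claim that the two linear forms defining $\beta_{10} \colon B_0 \ra \sco_\piii(1)$ are independent does not follow from ``non-degeneracy of $\phi$ combined with minimality'': if $\beta_{10} = (\ell , \lambda \ell)$ there is a trivial rank $1$ subbundle $B_0^\prime$ on which it vanishes, and what excludes this is that $\Cok {\overline \beta}_{\geq 0}^\prim$ is torsion for every $B_0^\prime$ --- a consequence of the stability/self-duality argument, not of $\phi$ alone. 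Likewise, ``applying the $c_1$-argument of part (a) to ${\overline \beta}_{2+}$'' is not directly available, because the stability input requires the isomorphism $M^\bullet/K^\bullet \simeq K^{\bullet \vee}$, which holds for the subcomplex built from ${\overline B}_+ \oplus B_0^\prime$ but not for $0 \ra {\overline B}_+ \ra A_{\leq -2}^\vee$; the paper instead deduces torsion-ness and $c_1(\Cok {\overline \beta}_{2+}) = 0$ from the already established bound $c_1(\Cok {\overline \beta}_{\geq 0}^\prim) \leq 1$ via the exact sequence linking the two cokernels through $\Cok \beta_{10}^\prim \simeq \sco_H(1)$. Finally, the concluding identifications ($\sci_{X^\prime}/\sci_X \simeq \sco_L$ and the epimorphism $\sci_{X^\prime} \ra \omega_X(2)$) are only described as an intended diagram chase, which you yourself flag as the unresolved obstacle; in the paper they come from the two exact sequences of complexes $0 \ra K^{\prime \bullet} \ra K^\bullet \ra K^\bullet/K^{\prime \bullet} \ra 0$ and $0 \ra K^\bullet \ra M^\bullet \ra M^\bullet/K^\bullet \ra 0$ together with $M^\bullet/K^\bullet \simeq K^{\prime \bullet \vee}$, and this part still needs to be carried out.
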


\begin{proof}
Let $B_0^\prime$ be an arbitrary trivial rank 1 subbundle of $B_0$.
As in the proof of Proposition~\ref{P:s(0)=1} (with ${\overline \beta}_+$ replaced by
${\overline \beta}_{\geq 0}^\prim$ and ${\overline B}_+$ replaced by
${\overline B}_+ \oplus B_0^\prime$ and taking into account that $s(0) = 2$) one deduces that
$\Cok {\overline \beta}_{\geq 0}^\prim$ is a torsion sheaf and that
$c_1(\Cok {\overline \beta}_{\geq 0}^\prim) \leq 1$.

\vskip2mm

\noindent
{\bf Claim 1.}\quad \emph{If} $\rho(-1) = 1$ \emph{then}
$c_1(\Cok {\overline \beta}_{\geq 0}^\prim) = 1$, $\forall \, B_0^\prime \subset B_0$
\emph{as above}. \emph{Moreover, the degeneracy scheme of} ${\overline \beta}_{2+}$
\emph{has the expected codimension} 2 \emph{in} $\piii$,
$\Ker {\overline \beta}_{2+} \simeq \sco_\piii(-1)$ \emph{and the component}
$\beta_{10} \colon B_0 \ra \sco_\piii(1)$ \emph{of} $\beta_0$ \emph{is defined by two
linearly independent linear forms}.

\vskip2mm

\noindent
\emph{Proof of Claim} 1. Since
${\overline \beta}_{\geq 0}^\prim \colon {\overline B}_+ \oplus B_0^\prime \ra A_-^\vee$ maps
${\overline B}_+$ into $A_{\leq -2}^\vee$, one gets an exact sequence$\, :$
\[
\Ker \beta_{10}^\prim \lra \Cok {\overline \beta}_{2+}
\lra \Cok {\overline \beta}_{\geq 0}^\prim \lra
\Cok \beta_{10}^\prim \lra 0\, ,  
\]
where $\beta_{10}^\prim \colon B_0^\prime \ra \sco_\piii(1)$ denotes the restriction of
$\beta_{10}$. Since $\Cok {\overline \beta}_{\geq 0}^\prim$ is a torsion sheaf, the morphism
$\beta_{10}^\prim$ must be non-zero, hence $\Ker \beta_{10}^\prim = 0$ and
$\Cok \beta_{10}^\prim \simeq \sco_H(1)$, for some plane $H \subset \piii$.

If $\Cok {\overline \beta}_{2+}$ would not be a torsion sheaf, Remark~\ref{R:c1scf}(b)
would imply that $c_1(\Cok {\overline \beta}_{2+}) \geq 2$ and this would contradict the
fact that $c_1(\Cok {\overline \beta}_{\geq 0}^\prim) \leq 1$. Consequently,
$\Cok {\overline \beta}_{2+}$ is a torsion sheaf. In this case
$c_1(\Cok {\overline \beta}_{2+}) \geq 0$. One deduces that
$c_1(\Cok {\overline \beta}_{\geq 0}^\prim) = 1$ and $c_1(\Cok {\overline \beta}_{2+}) = 0$,
hence the degeneracy locus of ${\overline \beta}_{2+}$ has (the expected) codimension 2 in
$\piii$. Since $\text{rk}\, {\overline B}_+ = \text{rk}\, A_-^\vee = \text{rk}\, A_{\leq -2}^\vee +1$
and 
$c_1(A_{\leq -2}^\vee) - c_1({\overline B}_+) = c_1(A_-^\vee) - 1 - c_1({\overline B}_+) = 1$ one also
gets that $\Ker {\overline \beta}_{2+} \simeq \sco_\piii(-1)$. 

Finally, since the restriction of $\beta_{10}$ to every rank 1 trivial subbundle $B_0^\prime$
of $B_0$ is non-zero, $\beta_{10}$ is defined by two linearly independent linear forms.
\hfill $\Diamond$

\vskip2mm

\noindent
{\bf Claim 2.}\quad \emph{If} $\rho(-1) = 0$ \emph{then} $\Cok {\overline \beta}_+$
\emph{is a torsion sheaf}.

\vskip2mm

\noindent
\emph{Proof of Claim} 2. ${\overline B}_+$ and $A_-^\vee$ have the same rank and
$c_1(A_-^\vee) - c_1({\overline B}_+) = s(0) = 2$. Assume, by contradiction, that
$\Cok {\overline \beta}_+$ is not a torsion sheaf. Choose a rank 1 trivial subbundle
$B_0^\prime$ of $B_0$. One has an exact sequence$\, :$
\[
B_0^\prime \lra \Cok {\overline \beta}_+ \lra \Cok {\overline \beta}_{\geq 0}^\prim
\lra 0\, . 
\]
Since, according to our assumption, $\Cok {\overline \beta}_+$ is a locally free sheaf
of rank $\geq 1$ on a non-empty open subset of $\piii$ and since
$\Cok {\overline \beta}_{\geq 0}^\prim$ is a torsion sheaf it follows that
$B_0^\prime \ra \Cok {\overline \beta}_+$ is an isomorphism on a non-empty open subset
of $\piii$ hence it is a monomorphism on $\piii$. Since $\rho(-1) = 0$,
Remark~\ref{R:c1scf}(b) implies that $c_1(\Cok {\overline \beta}_+) \geq 2$ and this
\emph{contradicts} the fact that $c_1(\Cok {\overline \beta}_{\geq 0}^\prim) \leq 1$.
\hfill $\Diamond$

\vskip2mm

(a) It suffices, actually, to show that there exists a trivial rank 1 subbundle $B_0^\prime$
of $B_0$ such that $c_1(\Cok {\overline \beta}_{\geq 0}^\prim) = 0$. Assume, by contradiction,
that $c_1(\Cok {\overline \beta}_{\geq 0}^\prim) = 1$, $\forall \, B_0^\prime \subset B_0$ rank 1
trivial subbundle. By Claim 2, $\Cok {\overline \beta}_+$ is a torsion sheaf. Since
${\overline B}_+$ and $A_-^\vee$ have the same rank it follows that
${\overline \beta}_+ \colon {\overline B}_+ \ra A_-^\vee$ is a monomorphism.

Let $B_0^\prime$ be an arbitrary trivial rank 1 subbundle of $B_0$. Since
$c_1(\Cok {\overline \beta}_{\geq 0}^\prim) = 1$ and since
$c_1(A_-^\vee) - c_1({\overline B}_+ \oplus B_0^\prime) = s(0) = 2$, it follows that the kernel
of ${\overline \beta}_{\geq 0}^\prim \colon {\overline B}_+ \oplus B_0^\prime \ra A_-^\vee$ is
isomorphic to $\sco_\piii(-1)$. One deduces an exact sequence$\, :$
\[
0 \lra \sco_\piii(-1) \lra B_0^\prime \lra \Cok {\overline \beta}_+ \lra
\Cok {\overline \beta}_{\geq 0}^\prim \lra 0\, ,
\]
from which one derives an exact sequence
$0 \ra \sco_H \ra \Cok {\overline \beta}_+ \ra \Cok {\overline \beta}_{\geq 0}^\prim \ra 0$,
for some plane $H \subset \piii$, of equation $h = 0$. It follows that the image of
$B_0^\prime \ra \Cok {\overline \beta}_+$ is annihilated by $h$. Moreover, one deduces that
$c_1(\Cok {\overline \beta}_+) = 2$ and that $H$ is an irreducible component of the support
of $\Cok {\overline \beta}_+$. It follows that there is a plane $H^\prim \subset \piii$, of
equation $h^\prime = 0$, such that $\text{Supp}\, \Cok {\overline \beta}_+ = H \cup H^\prim$
($H^\prim$ is not necessarily different from $H$).

Consequently, for any $B_0^\prime$ rank 1 trivial subbundle of $B_0$, the image of
$B_0^\prime \ra \Cok {\overline \beta}_+$ is annihilated either by $h$ or by $h^\prime$.
There exist distinct rank 1 trivial subbundles $B_0^\prime$ , $B_0^{\prime \prime}$ of $B_0$
such that the images of both of the morphisms $B_0^\prime \ra \Cok {\overline \beta}_+$,
$B_0^{\prime \prime} \ra \Cok {\overline \beta}_+$ are annihilated by the same linear form.
Let us say that this linear form if $h$. It follows that the image of the composite
morphism $B_0 \overset{\beta_0}{\lra} A_-^\vee \ra \Cok {\overline \beta}_+$ is annihilated
by $h$.
Since ${\overline \beta}_+ \colon {\overline B}_+ \ra A_-^\vee$ is a monomorphism, one has 
an exact sequence$\, :$
\[
0 \lra \Ker {\overline \beta}_{\geq 0} \lra B_0 \lra \Cok {\overline \beta}_+ \lra
\Cok {\overline \beta}_{\geq 0} \lra 0\, , 
\]
hence $hB_0(-1) \subseteq \Ker {\overline \beta}_{\geq 0}$. But
$\tH^0((\Ker {\overline \beta}_{\geq 0})(1)) \subseteq \tH^0(E(1))$ because $\tH^0(A_-(1)) = 0$
(due to the hypothesis $\rho(-1) = 0$), hence $\h^0(E(1)) \geq 2$. Using
Lemma~\ref{L:h0e(1)=2}, one gets a \emph{contradiction}.

\vskip2mm

(b) By Claim 1, the degeneracy scheme $X^\prime$ of
${\overline \beta}_{2+} \colon {\overline B}_+ \ra A_{\leq -2}^\vee$ has codimension 2 in $\piii$, 
$\Ker {\overline \beta}_{2+} \simeq \sco_\piii(-1)$ (hence
$\Cok {\overline \beta}_{2+} \simeq \omega_{X^\prime}(3)$) 
and the component $\beta_{10} \colon B_0 \ra \sco_\piii(1)$ of $\beta_0 \colon B_0 \ra A_-^\vee$
is defined by two linearly independent linear forms $h$ and $h^\prime$. Let $L \subset \piii$
be the line of equations $h = h^\prime = 0$. Recall that
$A_- = \sco_\piii(-1) \oplus A_{\leq -2}$ hence $A_-^\vee = A_{\leq -2}^\vee \oplus \sco_\piii(1)$. 
Let us denote the reduced semi-splitting monad of $E$ by $M^\bullet$. It has the following
two subcomplexes$\, :$
\[
K^{\prime \bullet}\  :\  0 \lra {\overline B}_+ \xra{{\overline \beta}_{2+}}
A_{\leq -2}^\vee \, ,\  
K^\bullet \  :\  \sco_\piii(-1) \xra{({\overline \alpha}_{+1}\, ,\, \alpha_{01})^{\text{t}}}
{\overline B}_+ \oplus B_0 \xra{{\overline \beta}_{\geq 0}} A_-^\vee\, . 
\]
The corresponding quotient complexes are$\, :$
\[
M^\bullet/K^{\prime \bullet} \  :\  A_- \xra{{\overline \alpha}_{\leq 0}}
B_0 \oplus {\overline B}_+^\vee
\xra{(\beta_{10}\, ,\, {\overline \beta}_{1-})} \sco_\piii(1)\, ,\  
M^\bullet/K^\bullet \  :\  A_{\leq -2} \xra{{\overline \alpha}_{-2}}
{\overline B}_+^\vee \lra 0\, . 
\]
The isomorphism $M^\bullet \izo M^{\bullet \vee}$ at the end of
Definition~\ref{D:semisplitmonad} induces isomorphisms
$M^\bullet/K^{\prime \bullet} \izo K^{\bullet \vee}$ and
$M^\bullet/K^\bullet \izo K^{\prime \bullet \vee}$. Moreover, $K^\bullet/K^{\prime \bullet}$ is
the complex
\[
\sco_\piii(-1) \overset{\alpha_{01}}{\lra} B_0 \overset{\beta_{10}}{\lra} \sco_\piii(1)\, .
\]

Using the short exact sequence of complexes
$0 \ra K^{\prime \bullet} \ra K^\bullet \ra K^\bullet/K^{\prime \bullet} \ra 0$ one gets an
isomorphism
$\sco_\piii(-1) \simeq \Ker {\overline \beta}_{2+} \izo \mathcal{H}^0(K^\bullet)$ and an
exact sequence$\, :$ 
\[
0 \lra \Cok {\overline \beta}_{2+} \lra \Cok {\overline \beta}_{\geq 0} \lra \sco_L(1)
\lra 0\, , 
\]
from which one deduces that the degeneracy scheme $X$ of ${\overline \beta}_{\geq 0}$ has
codimension 2 in $\piii$. The morphism
$({\overline \alpha}_{+1}\, ,\, \alpha_{01})^{\text{t}}\colon\sco_\piii(-1)\ra{\overline B}_+\oplus B_0$
is a locally split monomorphism hence its cokernel $C$ is a locally free sheaf on $\piii$.
${\overline \beta}_{\geq 0}$ factorizes as
${\overline B}_+ \oplus B_0 \twoheadrightarrow C \overset{\gamma}{\lra} A_-^\vee$. Since
$\text{rk}\, C = \text{rk}\, A_-^\vee + 1$, it follows that, actually, $X$ is a locally
Cohen-Macaulay curve. Since, as we noticed above,
$\Ker \gamma = \mathcal{H}^0(K^\bullet) \simeq \sco_\piii(-1)$, one deduces that
$\Cok {\overline \beta}_{\geq 0} = \Cok \gamma \simeq \omega_X(3)$. 

Using the exact sequence of complexes
$0 \ra K^\bullet/K^{\prime \bullet} \ra M^\bullet/K^{\prime \bullet} \ra M^\bullet/K^\bullet \ra 0$
and the fact that $M^\bullet/K^{\prime \bullet} \izo K^{\bullet \vee}$ and
$M^\bullet/K^\bullet \izo K^{\prime \bullet \vee}$ one gets an exact sequence$\, :$
\[
0 \lra \sci_X(1) \lra \sci_{X^\prime}(1) \lra \sco_L(1) \lra 0\, . 
\]

Finally, using the exact sequence
$0 \ra K^\bullet \ra M^\bullet \ra M^\bullet/K^\bullet \ra 0$ and the fact that
$M^\bullet/K^\bullet \izo K^{\prime \bullet \vee}$ one gets an exact sequence$\, :$ 
\[
0 \lra \sco_\piii(-1) \lra E \lra \sci_{X^\prime}(1) \lra \omega_X(3) \lra 0\, . 
\]
The fact that $\tH^1(\sci_{X^\prime}(l)) = 0$ for $l \leq 1$ follows using the exact sequences$\, :$
\[
0 \ra A_{\leq -2} \ra {\overline B}_+^\vee \ra \sci_{X^\prime}(1) \ra 0\, ,\
0 \ra {\overline B}_+^\vee \ra B_+^\vee \ra A_{\geq 0}^\vee \ra 0\, , 
\]
and the fact that $\tH^0(A_{\geq 0}^\vee) = 0$ because $\rho(0) = 0$. 
\end{proof}

\begin{corollary}\label{C:Delta2h0xprim}
Under the hypothesis of Proposition~\emph{\ref{P:s(0)=s(1)=2}(b)}, the spectrum of $E$ can be
computed by the formula   
$s(i) = (\Delta^2{\fam0 h}^0_{X^\prime})(i)$, $\forall \, i \geq 1$,
\emph{(}and $s(0) = 2$\emph{)}. 
\end{corollary}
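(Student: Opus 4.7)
The plan is to follow the template of Lemma~\ref{L:Delta2h0}: compute $\h^1(E(-i-1))$ as a cohomology group on the curve $X^\prime$ and then pass from $\h^1$ to $\h^0$ via Riemann--Roch. From the proof of Proposition~\ref{P:s(0)=s(1)=2}(b) the ingredients I would use are the short exact sequences
\[
0 \to \sco_\piii(-1) \to E \to \sci_Y(1) \to 0, \qquad 0 \to \sci_Y \to \sci_{X^\prime} \to \omega_X(2) \to 0,
\]
where $\sci_Y$ is the image of $E \to \sci_{X^\prime}(1)$, together with $0 \to \sco_L \to \sco_X \to \sco_{X^\prime} \to 0$, which comes from the relation $\sci_{X^\prime}/\sci_X \simeq \sco_L$.

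First I would use the first sequence and the vanishings $\h^1(\sco_\piii(l)) = \h^2(\sco_\piii(l)) = 0$ to obtain $\h^1(E(-i-1)) = \h^1(\sci_Y(-i))$ for every $i$. Twisting the second sequence by $-i$ and taking cohomology, I would invoke (a) $\h^0(\sci_{X^\prime}(-i)) = 0$ for $i \geq 1$ (since $\sci_{X^\prime}(-i) \hookrightarrow \sco_\piii(-i)$ has no global sections), and (b) $\h^1(\sci_{X^\prime}(-i)) = 0$ for $i \geq -1$, which is part of the conclusion of Proposition~\ref{P:s(0)=s(1)=2}(b), to deduce $\h^1(\sci_Y(-i)) = \h^0(\omega_X(2-i)) = \h^1(\sco_X(i-2))$ for $i \geq 1$, the last equality being Serre duality on the locally Cohen--Macaulay curve $X$. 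Finally, twisting the third sequence by $i-2$ and using $\h^1(\sco_L(i-2)) = \h^1(\sco_\pj(i-2)) = 0$ for $i \geq 1$, I would conclude $\h^1(\sco_X(i-2)) = \h^1(\sco_{X^\prime}(i-2))$, so that
\[
\h^1(E(-i-1)) = \h^1(\sco_{X^\prime}(i-2)) \quad \text{for every } i \geq 1.
\]

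To finish, for $i \geq 1$ the indices $-i-1$, $-i-2$, $-i-3$ all lie in the range $\leq -2$, so Definition~\ref{D:spectrum} together with the formula just obtained yields $s(i) = n_{-i-1} - n_{-i-2} = \Delta^2 \h^1_{X^\prime}(i)$. Riemann--Roch on $X^\prime$ makes $\chi(\sco_{X^\prime}(j))$ an affine function of $j$, hence $\Delta^2 \h^0_{X^\prime} = \Delta^2 \h^1_{X^\prime}$, producing the desired formula. The only real delicacy in this plan is tracking the ranges of validity of the various cohomological vanishings: this is precisely why the formula is asserted only for $i \geq 1$, while $s(0) = 2$ is supplied directly by the hypothesis rather than by this computation.
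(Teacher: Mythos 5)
Your proof is correct and is essentially the paper's own argument: the same two exact sequences coming from Proposition~\ref{P:s(0)=s(1)=2}(b), Serre duality on $X$, Riemann--Roch, and the sequence $0 \ra \sco_L \ra \sco_X \ra \sco_{X^\prime} \ra 0$ arising from $\sci_{X^\prime}/\sci_X \simeq \sco_L$. The only (harmless) difference is that you compare $X$ and $X^\prime$ at the level of $\h^1$, using $\h^1(\sco_L(j)) = 0$ for $j \geq -1$ and restricting to $i \geq 1$, whereas the paper first obtains $s(i) = (\Delta^2\h^0_X)(i)$ for all $i \geq 0$ as in Lemma~\ref{L:Delta2h0} and then passes to $X^\prime$ via $\h^0_X = \h^0_{X^\prime} + \h^0_L$, which needs $\h^0(\sco_{X^\prime}(-2)) = 0$.
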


\begin{proof}
Consider the exact sequences$\, :$
\[
0 \ra \sco_\piii(-1) \ra E \ra \sci_Y(1) \ra 0\, ,\
0 \ra \sci_Y \ra \sci_{X^\prime} \ra \omega_X(2) \ra 0\, . 
\]
Since $\tH^1(\sci_{X^\prime}) = 0$ it follows, as in the proof of Lemma~\ref{L:Delta2h0},
that $s(i) = (\Delta^2\h^0_X)(i)$, $\forall \, i \geq 0$. Using the exact sequence
$0 \ra \sco_L \ra \sco_X \ra \sco_{X^\prime} \ra 0$ and the fact that
$\tH^0(\sco_{X^\prime}(-2)) = 0$, one gets that $\h^0_X = \h^0_{X^\prime} + \h^0_L$. 
\end{proof}

\begin{corollary}\label{C:s(0)=s(1)=s(2)=2}
Under the hypothesis of Proposition~\emph{\ref{P:s(0)=s(1)=2}(b)}, assume that, moreover,
$s(2) = 2$. Then ${\fam0 H}^0(\sci_{X^\prime}(2)) \neq 0$ and
${\fam0 H}^1(\sci_{X^\prime}(2)) = 0$. 
\end{corollary}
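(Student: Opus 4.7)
The plan is to reduce the question to a cohomology computation on the line bundles appearing in the reduced semi-splitting monad. The proof of Proposition~\ref{P:s(0)=s(1)=2}(b) already furnishes the short exact sequence
\[
0 \lra A_{\leq -2} \lra \overline{B}_+^\vee \lra \sci_{X^\prime}(1) \lra 0.
\]
Twisting by $\sco_\piii(1)$ and noting that $A_{\leq -2}(1)$ is a direct sum of line bundles $\sco_\piii(i+1)$ with $i \leq -2$, hence all of degree $\leq -1$, every cohomology group of $A_{\leq -2}(1)$ vanishes on $\piii$. Consequently $\tH^i(\sci_{X^\prime}(2)) \cong \tH^i(\overline{B}_+^\vee(1))$ for $i = 0,1$.

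Next I would bring in the defining sequence $0 \to A_{\geq 0} \to B_+ \to \overline{B}_+ \to 0$. Dualizing and twisting gives $0 \to \overline{B}_+^\vee(1) \to B_+^\vee(1) \to A_{\geq 0}^\vee(1) \to 0$. To compute its cohomology I need to know $\rho(0)$, $\rho(1)$ and $b_1$. From Corollary~\ref{C:si=2}(b) we already have $\rho(0) = 0$. The added hypothesis $s(2) = 2$, combined with Theorem~\ref{T:rholeqs-1}(c) applied at $i = 1$ (using $s(-2) = s(2) = 2$), forces $\rho(1) = 0$. Hence every summand of $A_{\geq 0}^\vee(1)$ is of the form $\sco_\piii(1-i)$ with $i \geq 2$, so $\tH^0(A_{\geq 0}^\vee(1)) = 0$. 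Since $\tH^1(B_+^\vee(1)) = 0$ trivially, the long exact sequence yields $\tH^1(\overline{B}_+^\vee(1)) = 0$ and $\tH^0(\overline{B}_+^\vee(1)) = \tH^0(B_+^\vee(1)) = b_1$.

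Finally I would evaluate $b_1$ using the formula in Corollary~\ref{C:si=2}(b): $b_1 = \rho(-1) + 2 - (s(2) - \rho(1)) = 1 + 2 - (2 - 0) = 1$. Putting everything together gives $\h^0(\sci_{X^\prime}(2)) = 1 \neq 0$ and $\h^1(\sci_{X^\prime}(2)) = 0$, as desired.

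There is no real obstacle here; the entire argument is bookkeeping on the numerical invariants already pinned down by Corollary~\ref{C:si=2} and Theorem~\ref{T:rholeqs-1}. The only point requiring a moment's care is checking that the twist $l = 2$ still lies within the range where all summands of $A_{\leq -2}$ and $A_{\geq 0}^\vee$ have sufficiently negative degree, which is exactly what the hypotheses $\rho(-1) = 1$ (so that the single $\sco_\piii(-1)$ summand of $A_-$ has already been split off into the $\overline{\beta}_{\geq 0}$ construction) and $\rho(0) = \rho(1) = 0$ guarantee.
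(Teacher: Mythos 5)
Your proposal is correct and is essentially the paper's own proof, spelled out in more detail: the paper likewise deduces $\rho(1)=0$ from Theorem~\ref{T:rholeqs-1}(c), gets $b_1=\rho(-1)=1$ from Corollary~\ref{C:si=2}(b), and then reads off the two cohomology groups from the exact sequences $0 \to A_{\leq -2} \to \overline{B}_+^\vee \to \sci_{X^\prime}(1) \to 0$ and $0 \to \overline{B}_+^\vee \to B_+^\vee \to A_{\geq 0}^\vee \to 0$ using $\rho(0)=\rho(1)=0$. (Only a cosmetic remark: ``every cohomology group of $A_{\leq -2}(1)$ vanishes'' is slightly too strong, since $\tH^3$ of its summands need not vanish, but your argument only uses $\tH^i$ for $i\leq 2$, where the vanishing does hold.)
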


\begin{proof}
Since $s(2) = 2$, Theorem~\ref{T:rholeqs-1}(c) implies that $\rho(1) = 0$.
Corollary~\ref{C:si=2}(b) implies, now, that $b_1 = \rho(-1) = 1$. One can use, finally, the
last two exact sequences in the proof of Proposition~\ref{P:s(0)=s(1)=2} and the fact that
$\rho(0) = \rho(1) = 0$. 
\end{proof}

\begin{remark}\label{R:s(0)=s(1)=2}
Under the hypothesis of Proposition~\ref{P:s(0)=s(1)=2}(b), if $L$ is not a component of
$X^\prime$ then $L \cap X^\prime = \emptyset$. Indeed, in this case one can write
$Y = Y^\prime \cup L$, where $Y^\prime$ is a locally Cohen-Macaulay curve with the same support
as $X^\prime$. One has, by Lemma~\ref{L:z0cupz1},
$\omega_Y \vert_L \simeq \sch om_{\sco_L}(\sci_{L \cap Y^\prime , L} , \omega_L)$. Since
$\omega_Y \simeq \sco_Y(-2)$ and $\omega_L \simeq \sco_L(-2)$ it follows that
$L \cap Y^\prime = \emptyset$.

In this case, the epimorphism $\sci_{X^\prime} \ra \omega_X(2)$ induces an epimorphism
$\sci_{X^\prime} \ra \omega_{X^\prime}(2)$ whose kernel is the ideal sheaf $\sci_{Y^\prime}$ of a
curve $Y^\prime$ with $\omega_{Y^\prime} \simeq \sco_{Y^\prime}(-2)$ (cf. Ferrand \cite{f}). 
Using Serre's method of extensions, one can construct an extension$\, :$
\[
0 \lra \sco_\piii(-1) \lra E^\prim \lra \sci_{Y^\prime}(1) \lra 0\, , 
\]
with $E^\prim$ locally free. $E^\prim$ has Chern classes $c_1^\prime = 0$,
$c_2^\prime = c_2 - 1$, is stable and its spectrum $s^\prime$ has the property that
$s^\prime(0) = 1$ and $s^\prime(i) = s(i)$ for $i \geq 1$.

\vskip2mm 

{\bf Question :} Is there a stable vector bundle $E^\prim$ with the above spectrum even in the
case where $L$ is a component of $X^\prime$ ?
\end{remark}

\appendix
\section{Curves in a double plane}\label{A:xin2h}

We recall, in this appendix, the results of Hartshorne and Schlesinger \cite{has} and of
Chiarli, Greco and Nagel \cite{cgn} about curves in a double plane. We shall use, several
times, the following general fact$\, :$ if $\scf$ is a sheaf on $\p^n$
such that $\text{depth}\, \scf_x \geq 1$, $\forall \, x \in \text{Supp}\, \scf$, then
any irreducible component of $\text{Supp}\, \scf$ is positive dimensional. Indeed, if
$\text{Supp}\, \scf$ would have an irreducible component consisting of a single point
$x$ then $\scf_x$ would be an Artinian $\sco_{\p^n , x}$-module hence one would have
$\text{depth}\, \scf_x = 0$. 

\begin{remark}\label{R:has}
We begin by recalling the description of curves in a double plane given by Hartshorne and
Schlesinger. Let $H \subset \piii$ be a plane of equation $h = 0$. Choose a polynomial
subalgebra $R$ of $S = k[T_0 , \ldots , T_3]$ such that the composite map
$R \hookrightarrow S \ra S/Sh$ is bijective. Let $X$ be a space curve (recall that this means
purely 1-dimensional locally Cohen-Macaulay closed subscheme of $\piii$) contained
(as a subscheme) in the effective divisor $2H$. One associates to $X$ two plane curves
$C_0 \subset H$, $C \subset H$ and a 0-dimensional (or empty) subscheme $\Gamma$ of $H$
by the relations $\sci_{C_0} := (\sci_X : h)$ and
$\sci_{X \cap H , H} = \sci_{\Gamma , H}\sci_{C , H}$. Let $h = f_0 = 0$ (resp., $h = f = 0$) be
the equations of $C_0$ (resp., $C$), where $f_0$ (resp., $f$) is a homogeneous element
of $R$ of degree $r_0$ (resp., $r$). One has$\, :$
\[
\sci_X \cap \sci_H = \sci_{C_0}\sci_H \, ,\  
\sci_{X \cap H , H} := (\sci_X + \sci_H)/\sci_H \simeq \sci_X/(\sci_X \cap \sci_H) =
\sci_X/(\sci_{C_0}\sci_H). 
\]
Since $\sci_X + \sci_H \subseteq \sci_{C_0}$ it follows that
$\sci_{\Gamma , H}\sci_{C , H} \subseteq \sci_{C_0 , H}$ hence $C_0 \subseteq C$ (as schemes).
Since $\sci_{C , H} = \sci_C/\sci_H$ one gets that
$\sci_C/(\sci_X + \sci_H) \simeq \sco_\Gamma(-r)$. Using, now, the exact sequence$\, :$
\[
0 \lra (\sci_X + \sci_H)/\sci_X \lra \sci_C/\sci_X \lra \sci_C/(\sci_X + \sci_H)
\lra 0 
\]
and the fact that $\sci_{C_0}$ annihilates $(\sci_X + \sci_H)/\sci_X$ one deduces that
$\sci_{C_0}$ annihilates $\sci_C/\sci_X$ (by the general fact recalled at the beginning
of the appendix) hence $\sci_{C_0}\sci_C \subseteq \sci_X$.
One also deduces that $\sci_{C_0}$ annihilates
$\sci_C/(\sci_X + \sci_H) \simeq \sco_\Gamma(-r)$ hence $\Gamma \subset C_0$ (as
schemes). Since$\, :$
\[
\sci_{C_0}\sci_C/(\sci_{C_0}\sci_H) = \sci_{C_0}\sci_C/(\sci_{C_0}\sci_C \cap \sci_H)
\simeq (\sci_{C_0}\sci_C + \sci_H)/\sci_H 
\]
it follows that
$\sci_X/(\sci_{C_0}\sci_C) \simeq \sci_{\Gamma , H}\sci_{C , H}/(\sci_{C_0 , H}\sci_{C , H})
\simeq \sci_{\Gamma , C_0}(-r)$.

\vskip2mm

\noindent
{\bf Claim.}\quad
$\sci_C/\sci_X \simeq \sch om_{\sco_{C_0}}(\sci_{\Gamma , C_0} , \sco_{C_0}(-1))$.

\vskip2mm

\noindent
\emph{Proof of the claim.} Consider the exact sequence$\, :$ 
\[
0 \lra \sci_X/(\sci_{C_0}\sci_C) \lra \sci_C/(\sci_{C_0}\sci_C) \lra \sci_C/\sci_X \lra 0 
\]
and put $\scl := \sci_X/(\sci_{C_0}\sci_C) \simeq \sci_{\Gamma , C_0}(-r)$ and
$F := \sci_C/(\sci_{C_0}\sci_C) \simeq \sco_{C_0}(-1) \oplus \sco_{C_0}(-r)$. The composite
morphism$\, :$
\[
\scl \lra F \Izo \sch om_{\sco_{C_0}}(F , {\textstyle \bigwedge}^2F) \lra
\sch om_{\sco_{C_0}}(\scl , {\textstyle \bigwedge}^2F)
\]
is the zero morphism because it is so on $C_0 \setminus \Gamma$. Using the isomorphism
of ``d\'{e}calage''
\[
\sce xt_{\sco_{C_0}}^1(\sci_C/\sci_X , \omega_{C_0}) \simeq
\sce xt_{\sco_{C_0}}^2(\sci_C/\sci_X , \omega_H) 
\]
and the fact that $\text{depth} (\sci_C/\sci_X)_x \geq 1$,
$\forall \, x \in \text{Supp} (\sci_C/\sci_X)$, (because $\sci_C/\sci_X \subset \sco_X$) one
gets that $\sce xt_{\sco_{C_0}}^1(\sci_C/\sci_X , \omega_{C_0}) = 0$ hence the morphism
\[
\sch om_{\sco_{C_0}}(F , {\textstyle \bigwedge}^2F) \lra
\sch om_{\sco_{C_0}}(\scl , {\textstyle \bigwedge}^2F) 
\]
is an epimorphism. One deduces an epimorphism
$\sci_C/\sci_X \ra \sch om_{\sco_{C_0}}(\scl , \bigwedge^2F)$ which is an isomorphism on
$C_0 \setminus \Gamma$ hence it is an izomorphism on $C_0$. \hfill $\Diamond$ 
\end{remark}

\begin{remark}\label{R:homegax}
We want to describe the kernel and the image of the multiplication by
$h \colon \omega_X \ra \omega_X(1)$, where $X$ is as in Remark~\ref{R:has}.

Multiplication by $h \colon \sco_X(-1) \ra \sco_X$ factorizes as
$\sco_X(-1) \twoheadrightarrow \sco_{C_0}(-1) \overset{h}{\ra} \sco_X$. Applying
$\sch om_{\sco_X}(\ast , \omega_X)$, one deduces that the multiplication by
$h \colon \omega_X \ra \omega_X(1)$ factorizes as
$\omega_X \ra \omega_{C_0}(1) \hookrightarrow \omega_X(1)$. Consider, now, the exact
sequences$\, :$
\[
0 \ra \sco_{C_0}(-1) \xra{h} \sco_X \ra \sco_{X \cap H} \ra 0\, ,\
0 \ra \sci_C/(\sci_X + \sci_H) \ra \sco_{X \cap H} \ra \sco_C \ra 0\, , 
\]
and recall that $\sci_C/(\sci_X + \sci_H) \simeq \sco_\Gamma(-r)$. Applying
$\sch om_{\sco_X}(\ast , \omega_X)$ to the latter exact sequence one gets that
$\omega_C \izo \omega_{X \cap H}$ and that
$\sce xt_{\sco_X}^1(\sco_{X \cap H} , \omega_X) \izo \omega_\Gamma(r)$. 
Applying it, now, to the former exact sequence one
gets an exact sequence$\, :$
\[
0 \lra \omega_C \lra \omega_X \lra \omega_{C_0}(1) \lra \omega_\Gamma(r) \lra 0
\]
from which one deduces that the kernel of the multiplication by
$h \colon \omega_X \ra \omega_X(1)$ is $\omega_C$ and its image is
$\sci_{\Gamma , C_0} \otimes \omega_{C_0}(1)$. As a byproduct, one also gets that $\Gamma$
is a Gorenstein scheme, hence it is locally complete intersection in $H$. 
\end{remark}

\begin{remark}\label{R:cgn}
We recall here the results of Chiarli et al. \cite{cgn} about the equations of a curve
in a double plane. Using the notation from Remark~\ref{R:has}, consider the diagram$\, :$
\[
\SelectTips{cm}{12}\xymatrix{ & &
\sci_{X \cap H , H}\ar[r]\ar[d]^{\mu}\ar[rd]^-{\iota}\ar @{-->}[ld]_-{\theta} &
\sci_{C , H}\ar[d] & \\
0\ar[r] & \sci_H/(\sci_{C_0}\sci_H)\ar[r]^-u & \sci_C/(\sci_{C_0}\sci_C)\ar[r]^-q &
\sci_C/(\sci_{C_0}\sci_C + \sci_H)\ar[r] & 0} 
\]
where $\mu$ is the obvious morphism
$\sci_{X \cap H , H} \simeq \sci_X/(\sci_{C_0}\sci_H) \ra \sci_C/(\sci_{C_0}\sci_C)$. Since
$\sci_C/(\sci_{C_0}\sci_C + \sci_H) \simeq \sco_{C_0}(-r)$, $q$ has a right inverse $v$
mapping the class of $f \pmod{\sci_{C_0}\sci_C + \sci_H}$ to the class of
$f \pmod{\sci_{C_0}\sci_C}$. Then there is a unique morphism
$\theta \colon \sci_{X \cap H , H} \ra \sci_H/(\sci_{C_0}\sci_H)$ such that
$\mu - v \circ \iota = u \circ \theta$.

\vskip2mm

\noindent
{\bf Claim.}\quad
$\sce xt_{\sco_H}^1(\theta , \text{id}) \colon \sce xt_{\sco_H}^1(\sci_H/(\sci_{C_0}\sci_H) , \sco_H)
\lra \sce xt_{\sco_H}^1(\sci_{X \cap H , H} , \sco_H)$
\emph{is an epimorphism}.

\vskip2mm

\noindent
\emph{Proof of the claim.} $\mu$ factorizes through the inclusion
\[
{\overline \mu} \colon \sci_X/(\sci_{C_0}\sci_C) \lra \sci_C/(\sci_{C_0}\sci_C) 
\]
and so do the
morphisms $\iota$ and $\theta$ inducing morphisms
\[
{\overline \iota} \colon \sci_X/(\sci_{C_0}\sci_C) \ra \sci_C/(\sci_{C_0}\sci_C + \sci_H)
\text{ and } 
{\overline \theta} \colon \sci_X/(\sci_{C_0}\sci_C) \ra \sci_H/(\sci_{C_0}\sci_H),
\]
respectively. Since
$\sce xt_{\sco_H}^2(\sci_C/\sci_X , \sco_H) = 0$ it follows that$\, :$
\[
\sce xt_{\sco_H}^1({\overline \mu} , \text{id}) \colon
\sce xt_{\sco_H}^1(\sci_C/(\sci_{C_0}\sci_C) , \sco_H) \lra
\sce xt_{\sco_H}^1(\sci_X/(\sci_{C_0}\sci_C) , \sco_H) 
\]
is an epimorphism. Using the commutative diagram$\, :$
\[
\SelectTips{cm}{12}\xymatrix{0\ar[r] & \sci_{X \cap H , H}\ar[r]\ar[d] & \sci_{C , H}\ar[r]\ar[d] &
\sci_C/(\sci_X + \sci_H)\ar[r] \ar @{=}[d] & 0\\
0\ar[r] & \sci_X/(\sci_{C_0}\sci_C)\ar[r]^-{{\overline \iota}} &
\sci_C/(\sci_{C_0}\sci_C + \sci_H)\ar[r] &
\sci_C/(\sci_X + \sci_H)\ar[r] & 0}
\]
one gets an exact sequence$\, :$
\begin{gather*}
\sce xt_{\sco_H}^1(\sci_C/(\sci_{C_0}\sci_C + \sci_H) , \sco_H)
\xra{\sce xt^1({\overline \iota} , \text{id})}
\sce xt_{\sco_H}^1(\sci_X/(\sci_{C_0}\sci_C) , \sco_H) \lra\\ 
\lra \sce xt_{\sco_H}^1(\sci_{X \cap H , H} , \sco_H) \lra 0\, . 
\end{gather*}
Since, as we saw above,  $\sce xt_{\sco_H}^1({\overline \mu} , \text{id})$ is an epimorphism one 
deduces, using the last exact sequence, that $\sce xt_{\sco_H}^1(\theta , \text{id})$ is an
epimorphism. \hfill $\Diamond$

\vskip2mm

It follows, from the claim, that $\sce xt_{\sco_H}^1(\sci_{X \cap H , H} , \sco_H)$ is
generated, locally, by a single element, hence $\Gamma$ is locally complete intersection
in $H$.

Consider, now, a resolution$\, :$
\[
0 \lra {\textstyle \bigoplus}_{j=1}^m\sco_H(-b_j) \overset{A^{\text{t}}}{\lra} 
{\textstyle \bigoplus}_{i=0}^m\sco_H(-a_i)
\xra{(g_0 , \ldots , g_m)} \sci_{\Gamma , H} \lra 0\, , 
\]
where $g_i \in R_{a_i}$ and where $A^{\text{t}}$ is the transpose of an $m \times (m+1)$
matrix $A$ with homogeneous entries in $R$. $\theta$ can be lifted to a morphism of
resolutions$\, :$
\[
\SelectTips{cm}{12}\xymatrix @C=10mm{0\ar[r] &
{\textstyle \bigoplus}_{j=1}^m\sco_H(-b_j-r)\ar[r]^-{A^{\text{t}}}\ar[d]^{(F_1 , \ldots , F_m)} &
{\textstyle \bigoplus}_{i=0}^m\sco_H(-a_i-r)\ar[r]^-{(fg_0 , \ldots , fg_m)}\ar[d]^{(G_0 , \ldots , G_m)}
& \sci_{X \cap H , H}\ar[r]\ar[d]^\theta & 0\\
0\ar[r] & \sco_H(-r_0-1)\ar[r]^-{f_0} & \sco_H(-1)\ar[r]^-{{\widehat h}} &
\sci_H/(\sci_{C_0}\sci_H)\ar[r] & 0}
\]
with $G_i$, $F_j$ homogeneous elements of $R$. The above claim implies that the morphism$\, $
\[
\begin{bmatrix}
  & F_1\\
A & \vdots\\
  & F_m   
\end{bmatrix}
\colon \begin{matrix} {\textstyle \bigoplus}_{i=0}^m\sco_H(a_i+r)\\ \oplus\\ \sco_H(r_0+1)
\end{matrix} \lra {\textstyle \bigoplus}_{j=1}^m\sco_H(b_j+r)  
\]
is an epimorphism. Since the ideal sheaf $\sci_{C_0}\sci_C$ defines an arithmetically
Cohen-Macaulay curve in $\piii$ one deduces easily, using the diagram from the beginning of
the remark, that the homogeneous ideal $I(X) \subset S$ is generated by$\, :$
\[
h^2\, ,\  hf_0\, ,\  f_0f\, ,\  hG_i + fg_i\, ,\  i = 0 , \ldots , m\, . 
\]
Actually, $f_0f$ is not a minimal generator of $I(X)$ because it can be expressed as a
combination of the other generators from the above system (one can write
$f_0 = \alpha_0g_0 + \cdots + \alpha_mg_m$ and $\theta(f_0f) = 0$ hence
$\alpha_0G_0 + \cdots + \alpha_mG_m = f_0F$, for some homogeneous $F \in R$). 
\end{remark}

\section{Stable 2-bundles with $c_1 = 0$ and ${\fam0 h}^0(E(1)) = 2$}\label{A:h0e(1)=2}

We include, in this appendix, a proof of Lemma~\ref{L:h0e(1)=2}. We begin by recalling a
well known fact.

\begin{lemma}\label{L:z0cupz1}
Let $Z$ be a purely $1$-dimensional locally Cohen-Macaulay projective scheme and let
$Z_0$ and $Z_1$ be purely $1$-dimensional closed subsets of $Z$, with no common irreducible
component, such that $Z = Z_0 \cup Z_1$ as sets. Endow $Z_i$ with the structure of closed
subscheme of $Z$ defined by the ideal sheaf
${\fam0 Ker}(\sco_Z \ra u_{i \ast}\sco_{Z \setminus Z_{1-i}})$, where
$u_i \colon Z \setminus Z_{1-i} \ra Z$ is the inclusion morphism, $i = 0,\, 1$. Then$\, :$

\emph{(a)} $Z_0$ and $Z_1$ are locally Cohen-Macaulay and $Z = Z_0 \cup Z_1$ as schemes.

\emph{(b)} Denoting by $D$ the scheme $Z_0 \cap Z_1$, if $Z$ is Gorenstein at every point
of $D$ then $\omega_Z \vert_{Z_i} \izo \sch om_{\sco_{Z_i}}(\sci_{D , Z_i} , \omega_{Z_i})$,
$i = 0,\, 1$. 
\end{lemma}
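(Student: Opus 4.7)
The plan for part (a) is to analyze the stalk at each point $x \in Z$ by primary decomposition. Writing $R := \sco_{Z,x}$, the hypothesis that $Z$ is locally Cohen-Macaulay of pure dimension one gives an irredundant primary decomposition $0 = \bigcap_{k=1}^n \fq_k$ with no embedded primes, where $\fq_k$ is $\fp_k$-primary and the $\fp_k$ correspond to the irreducible components $C_k$ of $Z$ through $x$. Since $Z = Z_0 \cup Z_1$ as sets and no irreducible component is shared, the indices partition as $\{1,\dots,n\} = I_0 \sqcup I_1$, where $I_j := \{k : C_k \subseteq Z_j\}$. I then verify directly from the formula $\sci_{Z_i, Z} = \Ker(\sco_Z \to u_{i \ast}\sco_{Z \setminus Z_{1-i}})$ that $\sci_{Z_i, Z, x}$ equals the kernel of the localization map $R \to \prod_{k \in I_i} R_{\fp_k}$, hence equals $\bigcap_{k \in I_i} \fq_k$ (using the standard fact that $\fq_k = \Ker(R \to R_{\fp_k})$ in a ring without embedded primes). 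This gives at once $\sci_{Z_0, Z, x} \cap \sci_{Z_1, Z, x} = \bigcap_{k=1}^n \fq_k = 0$, proving $Z = Z_0 \cup Z_1$ as schemes; and $\sco_{Z_i, x}$ inherits a primary decomposition without embedded components, so is Cohen-Macaulay of dimension one.

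For part (b), the equality $\sci_{Z_i, Z} \cap \sci_{Z_{1-i}, Z} = 0$ from (a) yields a canonical isomorphism $\sci_{Z_{1-i}, Z} \izo \sci_{D, Z_i}$ of $\sco_Z$-modules, realised as the composite $\sci_{Z_{1-i}, Z} \hookrightarrow \sco_Z \twoheadrightarrow \sco_{Z_i}$ (whose kernel is zero and whose image is $\sci_{D, Z_i}$). Using this, I will construct a natural morphism
\[
\Phi \colon \omega_Z \otimes_{\sco_Z} \sco_{Z_i} \lra \sch om_{\sco_{Z_i}}(\sci_{D, Z_i}, \omega_{Z_i})
\]
by sending the class of $s \in \omega_Z$ to the homomorphism $j \mapsto \tilde j \cdot s$, where $\tilde j \in \sci_{Z_{1-i}, Z}$ is the unique preimage of $j$. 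The containment $\sci_{Z_i, Z} \cdot \sci_{Z_{1-i}, Z} \subseteq \sci_{Z_i, Z} \cap \sci_{Z_{1-i}, Z} = 0$ simultaneously ensures that $\tilde j \cdot s$ lies in the subsheaf $\omega_{Z_i} = \sch om_{\sco_Z}(\sco_{Z_i}, \omega_Z) \subseteq \omega_Z$ and that the construction is independent of the lift of $s$ modulo $\sci_{Z_i, Z}\,\omega_Z$.

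The final step is to show $\Phi$ is an isomorphism by a stalk check. At $x \in Z_i \setminus D$ one has $\sci_{Z_i, Z, x} = 0$ and $\sci_{D, Z_i, x} = \sco_{Z_i, x}$, so both sides reduce to $\omega_{Z_i, x}$ and $\Phi_x$ is the identity. At $x \in D$ the Gorenstein hypothesis gives $\omega_{Z, x} \simeq R$; writing $I := \sci_{Z_i, Z, x}$ and $J := \sci_{Z_{1-i}, Z, x}$, the source of $\Phi_x$ becomes $R/I$, while the target becomes $\text{Hom}_{R/I}((J+I)/I, \text{Ann}_R(I))$. The isomorphism $(J+I)/I \simeq J$ (from $I \cap J = 0$), together with the equality $\text{Ann}_R(I) = J$ (a primary-decomposition computation using that $I R_{\fp_k} = R_{\fp_k}$ for $k \in I_{1-i}$), identifies the target with $\text{End}_{R/I}(J) = \text{End}_{R/I}(\omega_{R/I})$, since $\omega_{R/I} = \text{Hom}_R(R/I, R) = \text{Ann}_R(I) = J$. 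Unwinding, $\Phi_x$ becomes the canonical homothety map $R/I \to \text{End}_{R/I}(\omega_{R/I})$. I expect the main obstacle to be invoking cleanly the standard fact that this homothety is an isomorphism for any Cohen-Macaulay local ring with canonical module, and verifying carefully the identification $\text{Ann}_R(I) = J$ from the primary decomposition; the remaining steps are formal.
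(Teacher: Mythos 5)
Your proposal is correct, but it reaches the conclusion by a genuinely more local route than the paper, especially in part (b). For (a) the paper argues in two lines: $\sco_{Z_i}$ embeds into $u_{i \ast}\sco_{Z \setminus Z_{1-i}}$, which gives depth $\geq 1$ at every point (local Cohen--Macaulayness), and $\sco_Z \to \bigoplus_i u_{i \ast}\sco_{Z \setminus Z_{1-i}}$ is a monomorphism, which gives $\sci_{Z_0} \cap \sci_{Z_1} = 0$; your stalkwise primary-decomposition argument proves the same statements and, as a useful byproduct, records the identities $I \cap J = 0$, $IJ = 0$ and $\text{Ann}_R(I) = J$ that you exploit in (b). For (b) the paper applies $\sch om_{\sco_Z}(\ast , \omega_Z)$ to $0 \to \sci_D/\sci_{Z_i} \to \sco_Z \to \sco_{Z_{1-i}} \to 0$, obtaining the exact sequence $0 \to \omega_{Z_{1-i}} \to \omega_Z \to \sch om_{\sco_{Z_i}}(\sci_{D , Z_i} , \omega_{Z_i}) \to 0$ (the surjectivity resting on the vanishing of $\sce xt^1_{\sco_Z}(\sco_{Z_{1-i}} , \omega_Z)$, since $\sco_{Z_{1-i}}$ is a maximal Cohen--Macaulay $\sco_Z$-module), and then shows the induced epimorphism $\omega_Z \vert_{Z_i} \to \sch om_{\sco_{Z_i}}(\sci_{D , Z_i} , \omega_{Z_i})$ is injective because its kernel is supported on the $0$-dimensional $D$ while the Gorenstein hypothesis forces $\omega_Z \vert_{Z_i}$ to have depth $\geq 1$ everywhere. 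You instead write down the same natural map explicitly ($s \mapsto (j \mapsto \tilde{j}s)$ via $\sci_{D , Z_i} \simeq \sci_{Z_{1-i} , Z}$) and check it on stalks, which at points of $D$ reduces, after trivializing $\omega_Z$ by the Gorenstein hypothesis, to the equality $\text{Ann}_R(I) = J$ and the standard fact that the homothety map $R/I \to \text{End}_{R/I}(\omega_{R/I})$ is bijective for a Cohen--Macaulay local ring with canonical module. The trade-off: the paper's argument is shorter and needs nothing beyond a depth/support observation, whereas yours avoids justifying the right-exactness of the dualized sequence but leans on the endomorphism theorem for canonical modules; note also that both arguments use, and you should cite rather than take as a definition, the identification $\omega_{Z_i} \simeq \sch om_{\sco_Z}(\sco_{Z_i} , \omega_Z)$ for the codimension-zero closed subscheme $Z_i \subseteq Z$.
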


\begin{proof}
(a) $Z_i$ is locally Cohen-Macaulay because its structure sheaf embeds into
$u_{i \ast}\sco_{Z \setminus Z_{1-i}}$, $i = 0,\, 1$, and $Z = Z_0 \cup Z_1$ as schemes because
the morphism $\sco_Z \ra \bigoplus_{i=0}^1u_{i \ast}\sco_{Z \setminus Z_{1-i}}$ is a monomorphism. 

(b) Applying $\sch om_{\sco_Z}(\ast , \omega_Z)$ to the exact sequence$\, :$
\[
0 \lra \sci_{Z_{1-i}}/\sci_Z \lra \sco_Z \lra \sco_{Z_{1-i}} \lra 0 
\]
and using the fact that
\[
\sci_{Z_{1-i}}/\sci_Z = \sci_{Z_{1-i}}/(\sci_{Z_{1-i}} \cap \sci_{Z_i}) \simeq
(\sci_{Z_{1-i}} + \sci_{Z_i})/\sci_{Z_i} = \sci_D/\sci_{Z_i} 
\]
one gets an exact sequence$\, :$
\[
0 \lra \omega_{Z_{1-i}} \lra \omega_Z \lra \sch om_{\sco_{Z_i}}(\sci_{D , Z_i} , \omega_{Z_i})
\lra 0\, . 
\]
The hypothesis implies that $(\omega_Z \vert_{Z_i})_x$ has depth $\geq 1$,
$\forall \, x \in Z_i$. The epimorphism
$\omega_Z \vert_{Z_i} \ra \sch om_{\sco_{Z_i}}(\sci_{D , Z_i} , \omega_{Z_i})$ deduced from the above
exact sequence is an isomorphism on $Z_i \setminus D$ hence it is an isomorphism on $Z_i$. 
\end{proof}

\begin{proof}[Proof of Lemma~\emph{\ref{L:h0e(1)=2}}]
Assume that $\h^0(E(1)) \geq 2$. Let $s$ be an arbitrary non-zero global section of $E(1)$
and let $X$ be its zero scheme. $X$ is a locally complete intersection curve in $\piii$ and
one has an exact sequence$\, :$
\[
0 \lra \sco_\piii(-1) \overset{s}{\lra} E \lra \sci_X(1) \lra 0\, ,
\]
from which one deduces that $\omega_X \simeq \sco_X(-2)$. Since $\h^0(E(1)) \geq 2$ it
follows that $\tH^0(\sci_X(2)) \neq 0$, i.e., $X$ is a subscheme of some effective divisor
$\Sigma$ on $\piii$ of degree 2.

\vskip2mm

\noindent
{\bf Case 1.}\quad $\Sigma$ \emph{is a nonsingular quadric surface}.

\vskip2mm

In this case, $X$ must be an effective divisor of type $(a , b)$ on
$\Sigma \simeq \pj \times \pj$. Since $\omega_X \simeq \sco_\Sigma(a-2 , b-2) \vert_X$ it
follows that $\sco_\Sigma(a , b) \vert_X \simeq \sco_X$. Using the intersection pairing on
$\Sigma$ one deduces that $2ab = 0$ hence $a = 0$ or $b = 0$.

\vskip2mm

\noindent
{\bf Case 2.}\quad $\Sigma$ \emph{is a quadric cone}.

\vskip2mm

This case cannot occur. Indeed, any curve on a quadric cone is arithmetically Cohen-Macaulay
hence $\tH^1_\ast(\sci_X) = 0$. This would imply that $\tH^1_\ast(E) = 0$ hence that $E$ is a
direct sum of line bundles, which is not the case.

\vskip2mm

\noindent
{\bf Case 3.}\quad $\Sigma$ \emph{is the union of two distinct planes}.

\vskip2mm

This case cannot occur, either. Indeed, assume that $\Sigma = H_0 \cup H_1$ and put
$L : = H_0 \cap H_1$. Assume, firstly, that $L$ is a component of $X$. Then, as we saw in
Remark~\ref{R:xinh0h1}, either $X$ is arithmetically Cohen-Macaulay which, as we saw while 
treating Case 2, is not possible or $X$ contains an arithmetically Cohen-Macauly
curve $X^\prime$ such that $\sci_{X^\prime}/\sci_X \simeq \sco_L(m)$, for some integer $m$.
Applying $\sch om_{\sco_X}(\ast , \omega_X)$ to the exact sequence
$0 \ra \sco_L(m) \ra \sco_X \ra \sco_{X^\prime} \ra 0$ and recalling that
$\omega_X \simeq \sco_X(-2)$ one gets that $m = 0$. Using, now, the exact sequence
$0 \ra \sci_X \ra \sci_{X^\prime} \ra \sco_L \ra 0$ one gets that
$\h^1(E(-1)) = \h^1(\sci_X) = 1$. But, by Riemann-Roch, $\h^1(E(-1)) - \h^2(E(-1)) = c_2$,
hence the fact that $\h^1(E(-1)) = 1$ \emph{contradicts} our assumption that $c_2 \geq 2$.

Consequently, $L$ cannot be a component of $X$. In this case $X = C_0 \cup C_1$ with
$C_i \subset H_i$ such that $L$ is not a component of $C_i$, $i = 0,\, 1$. The scheme
$D : = C_0 \cap C_1$ is 0-dimensional. By Lemma~\ref{L:z0cupz1},
$\omega_X \vert_{C_i} \simeq \sch om_{\sco_{C_i}}(\sci_{D , C_i} , \omega_{C_i})$. Since
$\omega_X \simeq \sco_X(-2)$ one deduces that either $C_i = \emptyset$ or $C_i$ is a line,
$i = 0,\, 1$, and $C_0 \cap C_1 = \emptyset$. It follows that $\text{deg}\, X \leq 2$,
hence $c_2 \leq 1$. This \emph{contradiction} shows, finally, that Case 3 cannot occur.

\vskip2mm

\noindent
{\bf Case 4.}\quad $\Sigma$ \emph{is a double plane}.

\vskip2mm

We use, in this case, the notation from Remark~\ref{R:has}. As we saw in
Remark~\ref{R:homegax}, there is an exact sequence$\, :$
\[
0 \lra \omega_C \lra \omega_X \lra \sci_{\Gamma , C_0} \otimes \omega_{C_0}(1) \lra 0\, . 
\]
Since $\omega_X \simeq \sco_X(-2)$ it follows that
$\sci_{\Gamma , C_0} \simeq \omega_{C_0}^{-1}(-3) \simeq \sco_{C_0}(-r_0)$. As we noticed at the
end of Remark~\ref{R:has},
$\sci_C/\sci_X \simeq \sch om_{\sco_{C_0}}(\sci_{\Gamma , C_0} , \sco_{C_0}(-1)) \simeq \omega_{C_0}(2)$.
It thus remains to show that $C = C_0$. One has, anyway, $C = C_0 + C_1$, for some effective 
divisor $C_1$ on $H$, of equations $h = f_1 = 0$, for some $f_1 \in R$ (such that $f = f_0f_1$).  
Assume, by contradiction, that $C_1 \neq \emptyset$.

\vskip2mm

\noindent
{\bf Claim.}\quad $C_0 \cap C_1$ \emph{is} 0-\emph{dimensional}.

\vskip2mm

\noindent
\emph{Proof of the claim.} Assume, by contradiction, that $C_0$ and $C_1$ have a common
irreducible component $D$. We shall use the results of Chiarli et al. \cite{cgn} recalled
in Remark~\ref{R:cgn}. Since $\sci_{\Gamma , C_0} \simeq \sco_{C_0}(-r_0)$, $\Gamma$ is the
complete intersection, in $H$, of $C_0$ and of another effective divisor $C_0^\prime$ on $H$
of degree $r_0$. Assume that $C_0^\prime$ has equations $h = f_0^\prime = 0$, with
$f_0^\prime \in R$. Tensorizing by $\sco_H(r)$ the last diagram in Remark~\ref{R:cgn},
one gets a commutative diagram$\, :$
\[
\SelectTips{cm}{12}\xymatrix{0\ar[r] &
\sco_H(-2r_0)\ar[r]^-{(-f_0^\prime , f_0)^{\text{t}}}\ar[d]^{F_1} &
2\sco_H(-r_0)\ar[r]^-{(f_0 , f_0^\prime)}\ar[d]^{(G_0 , G_1)} &
\sci_{\Gamma , H}\ar[r]\ar[d]^\theta & 0\\
0\ar[r] & \sco_H(r-r_0-1)\ar[r]^-{f_0} & \sco_H(r-1)\ar[r] & \sco_{C_0}(r-1)\ar[r] & 0}
\]
By the observation following the last diagram in Remark~\ref{R:cgn}, $-f_0^\prime$, $f_0$
and $F_1$ have no common zero on $H$, i.e., $F_1$ vanishes at no point of $\Gamma$. One has$\, :$
\[
f_0F_1 = -f_0^\prime G_0 + f_0G_1 \text{ hence } f_0(F_1-G_1) = -f_0^\prime G_0\, . 
\]
One deduces that $f_0$ divides $G_0$ in $R$, i.e., that $G_0 = G_0^\prime f_0$, for some
$G_0^\prime \in R$. Consequently, $G_1 = F_1 + G_0^\prime f_0^\prime$. In particular, $G_1$
vanishes at no point of $\Gamma$. 

By the result of Chiarli et al. recalled at the end of Remark~\ref{R:cgn},
$I(X) \subset S$ is generated by$\, :$
\[
h^2\, ,\  hf_0\, ,\  hG_0 + f_0f\, ,\  hG_1 + f_0^\prime f\, . 
\]
Now, notice the relation$\, :$
\[
f_0^\prime \cdot (hG_0 + f_0f) = f_0 \cdot (hG_1 + f_0^\prime f) - F_1 \cdot hf_0\, . 
\]
This relation shows that, locally, arround a point $x \in C_0 \setminus \Gamma$, $\sci_{X , x}$ is
generated only by the elements$\, :$
\[
h^2\, ,\  -hf_0\, ,\  hG_1 + f_0^\prime f\, . 
\]
One gets, locally, a free resolution$\, :$
\[
0 \ra 2\sco_{\piii , x}
\xra{\left(\begin{smallmatrix} f_0 & h & 0\\ G_1 & -f_0^\prime f_1 & -h
\end{smallmatrix}\right)^{\text{t}}} 3\sco_{\piii , x} \lra \sci_{X , x} \lra 0\, . 
\]
If, as we supposed, $C_0$ and $C_1$ have a common irreducible component $D$ then there is a
point $x \in D$ such that $G_1$ vanishes at $x$. This point $x$ does
not belong to $\Gamma$ because $G_1$ vanishes at no point of $\Gamma$. Then the above free
resolution is minimal hence $X$ is not locally complete intersection at $x$ and this is a
\emph{contradiction}. \hfill $\Diamond$

\vskip2mm

Since $C_0$ and $C_1$ have no common irreducible component, one must have, according to
Lemma~\ref{L:z0cupz1}(a), $X = X_0 \cup C_1$, where $X_0$ is a locally Cohen-Macaulay curve
with the same underlying set as $C_0$. Then Lemma~\ref{L:z0cupz1}(b) implies that
$\omega_X \vert_{C_1} \simeq \sch om_{\sco_{C_1}}(\sci_{X_0 \cap C_1 , C_1} , \omega_{C_1})$. Since
$X_0 \cap C_1 \neq \emptyset$ the last isomorphism clearly \emph{contradicts} the fact that
$\omega_X \simeq \sco_X(-2)$. This contradiction shows that $C_1 = \emptyset$. 
\end{proof}

\end{document}